 %------------------------------------------------------------------------------
% Beginning of journal.tex
%------------------------------------------------------------------------------
%
% AMS-LaTeX version 2 sample file for journals, based on amsart.cls.
%
%        ***     DO NOT USE THIS FILE AS A STARTER.      ***
%        ***  USE THE JOURNAL-SPECIFIC *.TEMPLATE FILE.  ***
%
% Replace amsart by the documentclass for the target journal, e.g., tran-l.
%
\documentclass[reqno,10pt]{amsart}%%Â¡Â°reqnoÂ¡Â±ÃÂœÃÃÂµÃÂ±Ã ÂºÃÃÃÃÃÂ±Ã%%
% ----------------------------------------------------------------
\makeatletter
 \usepackage{enumitem}
 \topmargin =00mm \headheight=00mm \headsep=10mm \textheight =220mm	
 \textwidth =160mm \oddsidemargin=0mm\evensidemargin =0mm

\newcommand{\Rmnum}[1]{\expandafter\@slowromancap\romannumeral#1@}

\newtheorem{lemma}{Lemma}[section]%%ÃÃ¢ÃÂŒÃÃÃÃ²ÃÃ­Â£Â¬Â¶ÂšÃÃ­ ÂµÃ ÂžÃ·ÃÃÂ±Ã ÂºÃ%%

\newtheorem{theorem}{Theorem}[section]
\newtheorem{corollary}{Corollary}[section]
\newtheorem{remark}{Remark}[section]

% ----------------------------------------------------------------
%\newtheorem{theorem}{Theorem}[section]%%ÃÃ¢ÃÂŒÃÃÃÃ²ÃÃ­Â£Â¬Â¶ÂšÃÃ­ ÂµÃ ÃÃÂŒÃÂ±Ã ÂºÃ%%
%\newtheorem{lemma}[theorem]{Lemma}
%
%\theoremstyle{definition}
%\newtheorem{definition}[theorem]{Definition}
%\newtheorem{example}[theorem]{Example}
%\newtheorem{xca}[theorem]{Exercise}
%
%\theoremstyle{remark}
%\newtheorem{remark}[theorem]{Remark}
\usepackage{mathrsfs}
\usepackage{color}
\usepackage{cite}
\numberwithin{equation}{section}

%    Absolute value notation

%    Blank box placeholder for figures (to avoid requiring any
%    particular graphics capabilities for printing this document).

\title[Compressible Euler Equations]
{The time asymptotic expansion for the compressible Euler equations  with time-dependent damping}%
%    Information for first author
\author[S. Geng,  F. Huang, G. Jin  and X. Wu]{}
%    Address of record for the research reported here
%\address{ Wuhan Institute of Physics and Mathematics, The Chinese Academy of Sciences, Wuhan $430071$. P.R.China}
%    Current address
%\curraddr{}
\email{sfgeng@xtu.edu.cn}
%    \thanks will become a 1st page footnote.
%\thanks{This work is supported in part}

%    Information for second author
%\author[Li-Na Zhang]{ }
%\address{Mathematical Research Section, School of Mathematical Sciences,
%Australian National University, Canberra ACT 2601, Australia}
\email{fhuang@amt.ac.cn}
\email{jinguanghui@ybu.edu.cn }
\email{wuxc19@amss.ac.cn}
%\thanks{Support information for the second author.}

%    General info
\subjclass[2000]{35L60, 35L65, 76R50, 76S05.}
 \keywords{ Compressible Euler equations,  time-dependent damping,  time asymptotic expansion}
 \thanks{* Corresponding author.}

\begin{document}

\maketitle \centerline{\scshape Shifeng Geng$^{a}$ \ \   Feimin Huang$^{b}$ \ \  Guanghui Jin$^{c}$ \ \   Xiaochun Wu$^{*, d}$}
\medskip
{\footnotesize
 \centerline{$^a\ \!$School of Mathematics and Computational Science, Xiangtan University}
 \centerline{Xiangtan 411105, China}
 \centerline{$^b\ \!$Academy of Mathematics and Systems Science, Chinese Academy of Sciences}
   \centerline{Beijing 100190, China}
% \centerline{$^c\ \!$School of Mathematical Sciences, University of Chinese Academy of Sciences}
% \centerline{Beijing 100049, China}
  \centerline{$^c\ \!$Department of Mathematics, Yanbian University, Yanji 133002, China }
    \centerline{$^d\ \!$School of Mathematics and Statistics, Central South University }
    \centerline{Changsha 410083, China}
 % \centerline{$^e\ \!$ Department of Mathematics, Hangzhou Normal University}
% \centerline{Hangzhou 311121, China}
}

\medskip
\bigskip
\begin{abstract}
In this paper, we study the compressible
Euler equations with time-dependent damping $-\frac{1}{(1+t)^{\lambda}}\rho u$.  We  propose a time asymptotic expansion around the self-similar solution of the generalized porous media equation (GPME) and rigorously justify this  expansion as $\lambda \in (\frac17,1)$. In other word, instead of  the self-similar solution of GPME, the expansion is  the best asymptotic profile of the solution to the compressible
Euler equations with time-dependent damping.
\end{abstract}

% ----------------------------------------------------------------
\section{Introduction}
In this paper, we consider the compressible
Euler equations with time-dependent damping as follows:
\begin{align}\label{bE:2.1}
\begin{cases}
 \rho_t+  m_{x}=0,\\
m_{t}+\Big(\frac{ m^2}{ \rho}+p( \rho)\Big)_{x}=-\frac{1}{(1+t)^{\lambda}}  m,
\end{cases}
\end{align}
with the initial data
\begin{align}\label{bbE:2.1}
( \rho,  m)(x,0)=( \rho_0, m_0)(x)\rightarrow(\rho_{\pm}, m_{\pm})\hspace{0.2cm}\mbox{as}\quad x \rightarrow \pm \infty,
\end{align}
where $\rho_{\pm}>0$ and $m_{\pm}$ are constants.
Here $\rho=\rho(x,t), m=m(x,t)$  and $p=p(\rho)$ denote the density,  momentum
and pressure, respectively. We assume that the pressure $p(\rho)$ is
a smooth function and satisfies $p^\prime(\rho)>0$ for $\rho>0$.
The damping term $-\frac{1}{(1+t)^{\lambda}}m$ represents the time-dependent friction effect, where $0<\lambda <1$ is constant.

When $\lambda=0$, the system \eqref{bE:2.1} becomes the compressible system of Euler equations with damping modeling the compressible flow through porous media. There has a huge literature on the investigations of global existence and large time behaviors of smooth solutions to the compressible Euler equations with damping.
Among them, Hsiao and Liu [13] firstly showed that the solution of \eqref{bE:2.1} tends time-asymptotically to the self-similar solution of porous media equation (PME), called by diffusion wave. Since then,
this problem has attracted considerable attentions, see \cite{Duyn-Peletier-1977,Hsiao-Liu-1,Hsiao-Liu-2, Luo-Zeng, MingMei,Mei-2,Nishihara,Nishihara-Wang-Yang,Sideris-Thomases-Wang, Wang-Yang-2007, Zhao, Zheng, Zhu-03} and the references therein.
When $0<\lambda<1$,   Cui-Yin-Zhang-Zhu \cite{Cui-Yin-Zhang-Zhu} showed that the asymptotic behavior of the solution to the problem  \eqref{bE:2.1}  is the so-called diffusion waves in the self-similar form of
 $({\bar \rho},{\bar m})(x,t)=({\bar \rho},{\bar m})(x/\sqrt{(1+t)^{1+\lambda}})$ satisfying
 \begin{align}\label{E:1.002}
\begin{cases}
{\bar \rho}_{t}+{\bar m}_{x}=0,\\
\displaystyle{
p({\bar \rho})_{x}=-\frac{1}{(1+t)^\lambda}{\bar m},
}
\end{cases}
\ \mbox{ equivalently, } \
\displaystyle{
 \frac{1}{(1+t)^\lambda}{\bar \rho}_{t} - p({\bar \rho})_{xx}=0 \quad\mbox{(GPME)}
}
\end{align}
with
 \begin{align}\label{E:1.002-1}
 \lim_{x\rightarrow\pm\infty}( {\bar \rho}, {\bar m})(x,t)=(\rho_{\pm},0).
 \end{align}
The convergence rate obtained in \cite{Cui-Yin-Zhang-Zhu} is in the form of
 \begin{align}\label{E:1.003}
\|( \rho-{\bar \rho})(t)\|_{L^\infty}\leq
\begin{cases}
 C(1+t)^{-\frac{3}{4}(1+\lambda)},\hspace{2.0cm}0<\lambda<\frac{1}{7},\\
  C_{\varepsilon}(1+t)^{-\frac{6}{7}+\varepsilon},\hspace{2.2cm}\lambda=\frac{1}{7},\\
   C(1+t)^{\lambda-1},\hspace{2.7cm}\frac{1}{7}<\lambda<1,
\end{cases}
\end{align}
%and
%\begin{align}\label{E:1.004}
%\|(u-{u_0})(t)\|_{L^\infty}\leq
%\begin{cases}
% C(1+t)^{-\frac{\lambda+5}{4}},\hspace{2.0cm}0<\lambda<\frac{1}{7},\\
%  C(1+t)^{-\frac{9}{7}+\varepsilon},\hspace{2.0cm}\lambda=\frac{1}{7},\\
%   C(1+t)^{\frac{3(\lambda-1)}{2}},\hspace{2.0cm}\frac{1}{7}<\lambda<1,
%\end{cases}
%\end{align}
for any small $\varepsilon>0.$ Similar results were obtained in \cite{Li-Li-Mei-Zhang} for the bipolar Euler-Poisson equation with time-dependent damping. For the other interesting works on the compressible
Euler equations with time-dependent damping \eqref{bE:2.1}, see  \cite{Chen-Li-Li-Mei-Zhang,Geng-Huang-Wu,Geng-Lin-Mei,Huang-Marcati-Pan, Huang-Pan-Wang,Pan-1,Pan-2,Sugiyama-1, Wirth-1,Wirth-2,Wirth-3} and reference therein.\\

It is noted that the decay rate \eqref{E:1.003} for $\frac{1}{7}<\lambda<1$ is different from the one for $0<\lambda\leq\frac{1}{7}$. We guess that the solution $\bar {\rho}$ of GPME may not be the best time-asymptotic profile of the solution $\rho$ of \eqref{bE:2.1} for  $\frac{1}{7}<\lambda<1$. We would also like to know more information on the large time behavior of $\rho(x,t)$.
Thus, we propose a time asymptotic expansion as follows:
\begin{align}\label{abE:2.2}
\begin{cases}
& \rho=\bar \rho+\sum\limits_{i=1}^{k}(1+t)^{-i\sigma}\rho_i(\xi)+P_k=:\tilde \rho_k+P_k,\\
& m=\bar m+\sum\limits_{i=1}^{k} (1+t)^{-(i+\frac{1}{2})\sigma}m_i(\xi)+Q_k=:\tilde m_k+Q_k, \quad \xi=\frac{x}{(1+t)^{\frac{1+\lambda}{2}}}, \,\sigma=1-\lambda,
\end{cases}
\end{align}
 where $(\bar \rho,\bar m)$ is the diffusion wave of GPME given in \eqref{E:1.002}, and $(\rho_i, m_i)(\xi)$ is a solution of a       linear equation  given in section \ref{section2} below. Once \eqref{abE:2.2} is justified, the optimal decay rate, the main part and even more subsequent order terms of $( \rho-{\bar \rho})(x,t)$ are clearly known.

%We now sketch the main strategy. By a heuristic analysis, for any fixed $\lambda \in (0,1)$, we will firstly construct a new time asymptotic expansion $(\tilde \rho, \tilde m)(\xi)$,  a combination of self-similar solutions to a large class of  of ODES. The key point is to show the existence of solutions of ODES satisfying some decay property at infinity.
%Then we will prove that $(\tilde \rho, \tilde m)(\xi)$ will be the perfect asymptotic profile of \eqref{bE:2.1} and obtain the sharper convergent rates in comparseion with the previous works.
 For convenience, we focus on the case of $m_+=m_-=0$. And the precise statement of  our main results are as follows.
\begin{theorem}\label{theorem1.1}
For any $\lambda \in (\frac17,1)$, there exists a unique positive integer $\displaystyle k_0(\lambda)=\sup_{0<\varepsilon<\frac12}[k(\lambda)-\varepsilon],$
 where $k(\lambda)=\frac{3(1+\lambda)}{4(1-\lambda)}$ and $[\cdot]$ stands for the floor function. If the initial data $(y_0,y_1)(x)\in H^{3}(\mathbb{R})\times H^{2}(\mathbb{R})$,
where
\begin{equation*}
y_0(x)=-\int_{-\infty}^{x}( \rho_0(r)-\tilde \rho_{k_0(\lambda)}(r,0)) dr\quad \mbox{and}\quad y_1(x)= m_0(x)-\tilde m_{k_0(\lambda)}(x,0),
\end{equation*}
then there is a small constant $\delta_0>0$ such that if $\delta_1=:\|y_0\|_{3}+\|y_1\|_{2}+|\rho_+-\rho_-| \leq\delta_0$, there exists a unique and global solution $( \rho,  m)(x,t)$ of the problem \eqref{bE:2.1}-\eqref{bbE:2.1}. Moreover, the remainders $P_{k_0(\lambda)}$ and $Q_{k_0(\lambda)}$ in \eqref{abE:2.2} satisfy
\begin{align}\label{bE:2.88}
&\sum_{s=0}^{2}\Big[(1+t)^{(s+1)(1+\lambda)}\|\partial_{x}^{s}P_{k_0(\lambda)}(t)\|^2+(1+t)^{2+s(1+\lambda)}\|\partial_x^sQ_{k_0(\lambda)}(t)\|^2\Big]
%\notag\\
%& +\sum_{s=0}^{4}\int_0^t[(1+\tau)^{\lambda+s(1+\lambda)}\|\partial_{x}^{s}(\tilde \rho-\bar{\rho})(\tau)\|^2+(1+\tau)^{1+s(1+\lambda)}\|\partial_{x}^s(\tilde m-\bar{m})(\tau)\|^2]d\tau
\notag\\
\leq&
\begin{cases}
C\delta_1^2,\quad\mbox{if} \quad k(\lambda) \notin \mathbb{N}^{+},\\
C\delta_1^2\ln^2 (1+t),\quad\mbox{if} \quad k(\lambda) \in \mathbb{N}^{+}.\nonumber
\end{cases}
\end{align}
\end{theorem}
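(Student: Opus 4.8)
The plan is to treat \eqref{bE:2.1}--\eqref{bbE:2.1} as a perturbation problem around the truncated profile $(\tilde\rho_{k_0},\tilde m_{k_0})$ from \eqref{abE:2.2} with $k_0=k_0(\lambda)$, and to close a hierarchy of time-weighted energy estimates for the remainder $(P_{k_0},Q_{k_0})$; global existence and the decay \eqref{bE:2.88} then follow from local existence together with a standard continuation argument.

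First I would derive the equations for the remainder. Substituting \eqref{abE:2.2} into \eqref{bE:2.1} and using the GPME equation \eqref{E:1.002} for $(\bar\rho,\bar m)$ together with the linear equations defining $(\rho_i,m_i)$ in Section \ref{section2}, one finds that $(P_{k_0},Q_{k_0})$ solves the system \eqref{bE:2.1} linearized about $\tilde\rho_{k_0}$ with two kinds of source terms: a truncation residual $(R_1,R_2)$ produced by cutting the expansion at order $k_0$, and the genuinely quadratic-and-higher nonlinear contributions. Since the first equation is in conservation form and the zero-mass normalization $\int_{\mathbb R}(\rho_0-\tilde\rho_{k_0}(\cdot,0))\,dx=0$ is implicit in the hypothesis $y_0\in H^{3}(\mathbb R)$, I would pass to the anti-derivative $y=-\int_{-\infty}^{x}P_{k_0}\,dr\in L^2$ and reduce the system to a single quasilinear damped wave equation of the form
\[
y_{tt}+\tfrac{1}{(1+t)^{\lambda}}\,y_{t}-p'(\tilde\rho_{k_0})\,y_{xx}=\mathcal N(y,y_x,y_t)+\mathcal R,
\]
where $\mathcal N$ collects the quadratic-and-higher nonlinearities and the genuinely lower-order linear terms, and $\mathcal R$ is the source built from the residual $(R_1,R_2)$; $Q_{k_0}$ is recovered from $y_t$ modulo a lower-order term. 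Local existence with $y\in C([0,T];H^{3})$ and $y_t\in C([0,T];H^{2})$ for small $\delta_1$ is classical.

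The core of the proof is the a priori estimate. I would work with the functional
\[
E(t)=\sum_{s=0}^{2}\Big[(1+t)^{(s+1)(1+\lambda)}\|\partial_x^{s+1}y(t)\|^2+(1+t)^{2+s(1+\lambda)}\|\partial_x^{s}y_t(t)\|^2\Big]
\]
(supplemented by a suitably weighted $\|y(t)\|^2$) and show that the a priori assumption $E(t)\le M\delta_1^2$ (respectively $E(t)\le M\delta_1^2\ln^2(1+t)$ when $k(\lambda)\in\mathbb N^{+}$) can be improved to one with a smaller constant. The mechanism is to multiply the wave equation and its spatial derivatives up to second order by multipliers of the type $(1+t)^{\alpha}y_t+(1+t)^{\beta}y$, integrate by parts and combine the resulting identities; the exponents $\alpha,\beta$ are dictated by the change of time variable $\tau\sim(1+t)^{1-\lambda}$ and the self-similar variable $\xi=x/(1+t)^{(1+\lambda)/2}$, under which the damped equation takes a normalized parabolic form (this is the mechanism already exploited in \cite{Cui-Yin-Zhang-Zhu}). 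The nonlinear terms are absorbed using $\delta_1\ll1$ and Sobolev embedding; a recurring technical point is that the top-order weighted bound cannot be obtained directly and must be recovered by interpolating a lower-order, more heavily weighted estimate against a higher-order, less weighted one.

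The main obstacle, and the precise reason for the definition $k_0(\lambda)=\sup_{0<\varepsilon<1/2}[k(\lambda)-\varepsilon]$ with $k(\lambda)=\tfrac{3(1+\lambda)}{4(1-\lambda)}$, is the control of the truncation residual. By the construction of $(\rho_i,m_i)$ this residual is, up to an explicit constant exponent $c$, of size $(1+t)^{-(k_0+1)(1-\lambda)+c}$ times a rapidly decaying profile in $\xi$, and one verifies that $k_0$ as defined is exactly the largest integer for which the corresponding time integral against $E(t)$ converges, so that the target decay in \eqref{bE:2.88} is preserved; note that $k_0\ge1$ throughout $\lambda\in(\tfrac17,1)$, which is exactly what makes the expansion non-trivial, whereas for $\lambda\le\tfrac17$ the diffusion wave $\bar\rho$ is already optimal by \eqref{E:1.003}. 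When $k(\lambda)\in\mathbb N^{+}$ the borderline time integral diverges logarithmically, which is the origin of the $\ln^2(1+t)$ loss. Once the a priori estimate is closed, continuation yields the global solution, and rewriting the bound on $E(t)$ in terms of $P_{k_0}=-y_x$ and $Q_{k_0}$ gives \eqref{bE:2.88}.
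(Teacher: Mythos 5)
Your proposal follows essentially the same route as the paper: reduce to the antiderivative $y=-\int_{-\infty}^{x}P_{k_0}\,dr$ (note that in fact $y_t=Q_{k_0}$ exactly, by the construction of the expansion), derive the quasilinear damped wave equation with source $S(\tilde\rho_{k_0})$, and close time-weighted energy estimates with multipliers of the form $(\alpha+t)^{\alpha'}y$ and $(\alpha+t)^{\beta'}y_t$ under an a priori assumption plus continuation, with the dichotomy $(k_0+1)-k(\lambda)>0$ versus $=0$ governing the integrability of the residual and producing the $\ln^2(1+t)$ loss in the resonant case. This matches the paper's Section 3 (Lemmas 3.1--3.5) built on the source-term bounds of Lemma 2.4, so no substantive difference or gap to report.
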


Using the Sobolev inequality, we can derive the following estimates.
\begin{corollary}\label{corollary1.1}
Under the assumptions of Theorem \ref{theorem1.1}, the remainders $P_{k_0(\lambda)}$ and $Q_{k_0(\lambda)}$ in \eqref{abE:2.2} satisfy
\begin{align*}
&\|\partial_x^i( P_{k_0(\lambda)},  Q_{k_0(\lambda)})(t)\|_{L^{\infty}}\\
&\leq
\begin{cases}
C\delta_1((1+t)^{-\frac{3+2i}{4}(1+\lambda)},(1+t)^{-1-\frac{1+2i}{4}(1+\lambda)}),\,\,i=0,1, \quad\mbox{if} \quad k(\lambda) \notin \mathbb{N}^{+},\\
C\delta_1((1+t)^{-\frac{3+2i}{4}(1+\lambda)}\ln (1+t),(1+t)^{-1-\frac{1+2i}{4}(1+\lambda)}\ln (1+t)),\,\,i=0,1,\quad\mbox{if} \quad k(\lambda) \in \mathbb{N}^{+}.
\end{cases}
\end{align*}
\end{corollary}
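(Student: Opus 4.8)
The plan is to obtain the pointwise bounds of the corollary directly from the weighted $L^2$ estimates of Theorem \ref{theorem1.1} by one-dimensional interpolation, so that no new analysis of the equations is required. The only tool is the Gagliardo--Nirenberg (Sobolev) inequality on the line,
\begin{equation*}
\|f\|_{L^\infty(\mathbb{R})}^2\le C\,\|f\|_{L^2(\mathbb{R})}\,\|\partial_x f\|_{L^2(\mathbb{R})},\qquad f\in H^1(\mathbb{R}),
\end{equation*}
which I would apply to $f=\partial_x^i P_{k_0(\lambda)}(\cdot,t)$ and $f=\partial_x^i Q_{k_0(\lambda)}(\cdot,t)$ for $i=0,1$. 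This is legitimate because \eqref{bE:2.88} already controls $\|\partial_x^s P_{k_0(\lambda)}(t)\|$ and $\|\partial_x^s Q_{k_0(\lambda)}(t)\|$ for $s=0,1,2$, so for each fixed $t$ both remainders lie in $H^2(\mathbb{R})$ and, in particular, $\partial_x^i P_{k_0(\lambda)}$ and $\partial_x^i Q_{k_0(\lambda)}$ belong to $H^1(\mathbb{R})$ for $i=0,1$.

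In the case $k(\lambda)\notin\mathbb{N}^{+}$, I would first read off from \eqref{bE:2.88} the individual decay rates
\begin{equation*}
\|\partial_x^s P_{k_0(\lambda)}(t)\|^2\le C\delta_1^2(1+t)^{-(s+1)(1+\lambda)},\qquad
\|\partial_x^s Q_{k_0(\lambda)}(t)\|^2\le C\delta_1^2(1+t)^{-2-s(1+\lambda)},\qquad s=0,1,2.
\end{equation*}
Inserting these into the Sobolev inequality with the pair $(s,s+1)=(i,i+1)$ gives
\begin{equation*}
\|\partial_x^i P_{k_0(\lambda)}(t)\|_{L^\infty}^2\le C\delta_1^2(1+t)^{-\frac{3+2i}{2}(1+\lambda)},\qquad
\|\partial_x^i Q_{k_0(\lambda)}(t)\|_{L^\infty}^2\le C\delta_1^2(1+t)^{-2-\frac{1+2i}{2}(1+\lambda)},
\end{equation*}
and taking square roots produces exactly the first branch of the stated estimate for $i=0,1$.

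For the borderline case $k(\lambda)\in\mathbb{N}^{+}$, Theorem \ref{theorem1.1} replaces the right-hand side $C\delta_1^2$ of \eqref{bE:2.88} by $C\delta_1^2\ln^2(1+t)$, so each of the $L^2$ norms above acquires an extra factor $\ln(1+t)$; the product of two such norms then carries $\ln^2(1+t)$, and after the final square root one power $\ln(1+t)$ survives, which is precisely the second branch.

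Since the argument is merely an algebraic bookkeeping of exponents on top of Theorem \ref{theorem1.1}, I do not anticipate any real obstacle; the only point worth a line of justification is the $H^1$-membership required to invoke the Sobolev inequality, which, as noted above, is already contained in the estimate \eqref{bE:2.88}.
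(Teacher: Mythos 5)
Your proposal is correct and is essentially the paper's own argument: the paper derives Corollary \ref{corollary1.1} from the weighted $L^2$ bounds of Theorem \ref{theorem1.1} by exactly this one-dimensional Sobolev (Gagliardo--Nirenberg) interpolation $\|f\|_{L^\infty}\le C\|f\|^{1/2}\|\partial_x f\|^{1/2}$, applied to $\partial_x^iP_{k_0(\lambda)}$ and $\partial_x^iQ_{k_0(\lambda)}$ for $i=0,1$, with the same exponent bookkeeping and the same handling of the logarithmic factor when $k(\lambda)\in\mathbb{N}^{+}$.
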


\begin{remark}
Note that $\frac{3(1+\lambda)}{4}=k(\lambda) \sigma$ and
\begin{align}
k_0(\lambda)=
\begin{cases}
[k(\lambda)], \quad \quad\mbox{if}\quad k(\lambda) \notin \mathbb{N}^{+},\\
k(\lambda)-1, \quad\,\, \mbox{if}\quad k(\lambda) \in \mathbb{N}^{+},\nonumber
\end{cases}
\end{align}
the decay rate of the remainder $P_{k_0(\lambda)}$ obtained in Corollary \ref{corollary1.1} is faster than $(1+t)^{-k_0(\lambda)\sigma}$, which justifies the expansion \eqref{abE:2.2} until the order of $k_0(\lambda)$. Nevertheless, we conjecture that the expansion
 \eqref{abE:2.2} still holds for any order.
\end{remark}

%\begin{remark}
%Since $\frac34(1+\lambda)> 1-\lambda$ for $\frac{1}{7}< \lambda<1$, it follows from \eqref{abE:2.2} and \eqref{bE:2.88-1} that
%\begin{equation*}%\label{conje}
%\rho-\bar \rho\approx (1+t)^{\lambda-1}\rho_1,
%\end{equation*}
% which implies that the decay rate of $\|( \rho-\bar \rho)(t)\|_{L^{\infty}(\mathbb{R})}$ for the case of $\lambda \in (\frac17,1)$ obtained in  \cite{Cui-Yin-Zhang-Zhu} is optimal. In addition, much more information is known from the expansion $(\tilde \rho_{k_0(\lambda)}, \tilde m_{k_0(\lambda)})$, which is the best asymptotic profile of the solution $(\rho,m)$ to \eqref{bE:2.1}-\eqref{bbE:2.1}.
%\end{remark}

\begin{remark}
If
\[\int_{-\infty}^{\infty}( \rho_0(r)-\tilde \rho_{k_0(\lambda)}(r,0)) dr\ne 0, \quad \mbox{i.e.,}\quad y_0(x)\notin H^{3}(\mathbb{R}),\]
 we can still prove that the above results hold by replacing $\xi=\frac{x}{(1+t)^{\frac{1+\lambda}{2}}}$ with $\xi=\frac{x+x_0}{(1+t)^{\frac{1+\lambda}{2}}}$ for some shift $x_0$, where $x_0$ is determined by the initial data,
\begin{equation*}%\label{shift}
x_0=\frac{1}{\rho_+-\rho_-}\int_{\mathbb{R}}( \rho_0(x)-\tilde \rho_{k_0(\lambda)}(x,0))dx.
\end{equation*}
\end{remark}

\begin{remark}
For the case that $m_-\neq 0$ or $m_+\neq 0$, we claim the above results still hold by introducing a correction function $(\hat \rho, \hat m)(x, t)$ with exponential decay rate to delete the gap at $x=\pm \infty$, see \cite{Cui-Yin-Zhang-Zhu,Hsiao-Liu-1} for the details.
\end{remark}

The arrangement of the present paper is as follows. In section 2, we propose the time asymptotic expansion $(\tilde \rho_k, \tilde m_k)$ of the solution to \eqref{bE:2.1}.  In section 3, we justify the expansion $(\tilde \rho_{k_0}, \tilde m_{k_0})$ and prove Theorem \ref{theorem1.1}.

\

\noindent{\bf Notations}. \ \ Throughout this paper, the symbol $C$
 will be used to represent a generic constant which is
independent of $x$ and $t$ and may vary from line to line.
$L^2(\mathbb{R})$  is the space of square integrable
real valued function defined on $\mathbb{R}$ with the norm
$ \|\cdot\|$, and $H^k(\mathbb{R})$ ($H^k$ without any
ambiguity) denotes the usual Sobolev space with the norm
$\|\cdot\|_{k}$, especially
$ \|\cdot\|_0$=$ \|\cdot\|$. In addition, for $r, s \in \mathbb N$, we adopt the convention that
\begin{equation}\label{b1}
\sum_{i=r}^sa_i=0 \quad \mbox{if} \quad s<r.\nonumber
\end{equation}

\section{The time asymptotic expansion}\label{section2}
We first list some properties on the diffusion wave $({\bar \rho}, {\bar m})(\xi)$ of GPME \eqref{E:1.002}-\eqref{E:1.002-1} as follows.
 \begin{lemma}[\cite{Cui-Yin-Zhang-Zhu, Li-Li-Mei-Zhang}]\label{lamm-01}
For the diffusion wave $({\bar \rho}, {\bar m})(\xi)$  of  \eqref{E:1.002}-\eqref{E:1.002-1}, it holds that
 \begin{align*}%\label{bE:2.005}
&|{\bar \rho}(\xi)-\rho_+|_{\xi>0}+|\bar \rho(\xi)-\rho_-|_{\xi<0}\leq  C|\rho_+-\rho_-| e^{-c\xi^2},\notag \\
&|\partial_{x}^k\partial_{t}^l\bar \rho|\leq C|\rho_+-\rho_-| (1+t)^{-\frac{k(1+\lambda)+2l}{2}}e^{-c\xi^2},\hspace{0.2cm}k+l\geq 1, k,l\geq 0,%\label{bE:2.4}
\notag \\
&\|\partial_{x}^k\partial_{t}^l\bar \rho\|^2\leq C|\rho_+-\rho_-| (1+t)^{-\frac{(2k-1)(1+\lambda)}{2}-2l},\hspace{0.5cm}  k+l\geq 1.
\end{align*}
 \end{lemma}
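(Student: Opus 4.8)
The plan is to exploit the self-similar structure: the diffusion wave depends on $x$ and $t$ only through $\xi=x(1+t)^{-(1+\lambda)/2}$, so every estimate reduces to decay properties of a single profile $\phi(\xi):=\bar\rho$ and its $\xi$-derivatives, after which the powers of $(1+t)$ are produced purely by the chain rule. First I would carry out the self-similar reduction. Writing $\bar\rho(x,t)=\phi(\xi)$ and computing $\partial_t\xi=-\tfrac{1+\lambda}{2}\xi(1+t)^{-1}$, $\partial_x\xi=(1+t)^{-(1+\lambda)/2}$, direct substitution into the GPME in \eqref{E:1.002} shows that both resulting terms carry the same factor $(1+t)^{-(1+\lambda)}$, which therefore cancels and leaves the autonomous nonlinear ODE
\begin{equation*}
(p'(\phi)\phi')'+\tfrac{1+\lambda}{2}\,\xi\,\phi'=0,\qquad \phi(\pm\infty)=\rho_\pm .
\end{equation*}
Existence, uniqueness and monotonicity of the connecting profile are classical and constitute the content cited from \cite{Cui-Yin-Zhang-Zhu,Li-Li-Mei-Zhang}; since $p'>0$ is continuous on the compact range of $\phi$, one has $0<c_0\le p'(\phi)\le c_1$ along the orbit.

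The key analytic step is the Gaussian decay of $\phi'$. Setting $q:=p'(\phi)\phi'$, the ODE reads $q'=-\tfrac{1+\lambda}{2}\,\xi\,q/p'(\phi)$, so that $(\log|q|)'=-\tfrac{1+\lambda}{2}\,\xi/p'(\phi)$; integrating from $0$ and bounding $\xi/p'(\phi)\ge \xi/c_1$ for $\xi>0$ gives $|q(\xi)|\le C e^{-c\xi^2}$ (and symmetrically for $\xi<0$), whence $|\phi'|=|q|/p'(\phi)\le Ce^{-c\xi^2}$. Integrating $\phi'$ from $\xi$ to $+\infty$ then yields $|\phi(\xi)-\rho_+|\le Ce^{-c\xi^2}$ for $\xi>0$, and likewise at $-\infty$, which is the first line of the Lemma. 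The prefactor $|\rho_+-\rho_-|$ enters because $\phi$ is monotone with total variation $|\rho_+-\rho_-|$, so $\|\phi'\|_{L^1}=|\rho_+-\rho_-|$ and the whole profile scales linearly with the boundary gap, a dependence made rigorous by the smallness of $|\rho_+-\rho_-|$. Higher $\xi$-derivatives are handled by induction: differentiating the ODE $b-1$ times expresses $\phi^{(b)}$ through $\phi',\dots,\phi^{(b-1)}$ and the smooth function $p'(\phi)$ with polynomial-in-$\xi$ coefficients, and since a polynomial times $e^{-c\xi^2}$ is dominated by $e^{-c'\xi^2}$, every $\phi^{(b)}$ with $b\ge1$ inherits $|\phi^{(b)}(\xi)|\le C|\rho_+-\rho_-|e^{-c\xi^2}$.

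Finally I would transfer these profile bounds to the stated space-time estimates. By the chain rule each $\partial_x$ produces a factor $(1+t)^{-(1+\lambda)/2}$ and each $\partial_t$ a factor $(1+t)^{-1}$, so that $\partial_x^k\partial_t^l\bar\rho$ equals $(1+t)^{-\frac{k(1+\lambda)}{2}-l}$ times a finite sum of terms $\xi^a\phi^{(b)}(\xi)$ with $b\ge1$ and bounded coefficients; combined with the Gaussian bound on $\phi^{(b)}$ this gives the pointwise estimate $|\partial_x^k\partial_t^l\bar\rho|\le C|\rho_+-\rho_-|(1+t)^{-\frac{k(1+\lambda)+2l}{2}}e^{-c\xi^2}$. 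The $L^2$ estimate then follows by squaring this bound and integrating in $x$ via the substitution $x=\xi(1+t)^{(1+\lambda)/2}$, $dx=(1+t)^{(1+\lambda)/2}d\xi$: the Gaussian integral is finite and the extra power $(1+t)^{(1+\lambda)/2}$ shifts the exponent from $-k(1+\lambda)-2l$ to $-\tfrac{(2k-1)(1+\lambda)}{2}-2l$, as claimed, the surplus power of $|\rho_+-\rho_-|$ from squaring being absorbed by smallness. The only genuinely delicate point is the uniform Gaussian decay of all orders of $\phi^{(b)}$ together with the linear dependence on $|\rho_+-\rho_-|$; everything else is bookkeeping of the self-similar scaling.
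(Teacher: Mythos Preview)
The paper does not supply a proof of this lemma at all: it is quoted directly from \cite{Cui-Yin-Zhang-Zhu, Li-Li-Mei-Zhang} and used as a black box, so there is no ``paper's proof'' to compare against. Your argument is correct and is in fact the standard route taken in those references: reduce to the profile ODE $(p'(\phi)\phi')'+\tfrac{1+\lambda}{2}\xi\phi'=0$, extract the Gaussian factor for $q=p'(\phi)\phi'$ via the integrating factor, bootstrap to higher $\xi$-derivatives, and then read off the $(x,t)$-estimates from the self-similar scaling. Nothing is missing; the only cosmetic point is that the stated $L^2$ bound carries a single factor $|\rho_+-\rho_-|$ rather than the $|\rho_+-\rho_-|^2$ one gets by squaring the pointwise estimate, which you already handle by absorbing one factor into the constant under the standing smallness assumption.
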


Then we consider the following time asymptotic expansion:
\begin{align}\label{*abE:2.2}
\begin{cases}
& \rho=\bar \rho(\xi)+\sum\limits_{i=1}^k(1+t)^{-i\sigma}\rho_i(\xi)+P_k=:\tilde \rho_k+P_k,\\
& m=\bar m(\xi)+\sum\limits_{i=1}^k(1+t)^{-(i+\frac{1}{2})\sigma}m_i(\xi)+Q_k=:\tilde m_k+Q_k,
\end{cases}
\end{align}
where $k$ is a positive integer determined later, and
\[ \xi=\frac{x}{(1+t)^{\frac{1+\lambda}{2}}}, \,\,\sigma=1-\lambda.\]
Note that $\bar \rho_t+\bar m_x=0$, we expect
\begin{equation}\label{bE:2.3}
((1+t)^{-i\sigma}\rho_i)_{t}+  ((1+t)^{-(i+\frac12)\sigma}m_i)_{x}=0,\nonumber
\end{equation}
which implies that
 \begin{equation}\label{bE:2.5n}
m_{i \xi}=i\sigma\rho_i+\frac{1+\lambda}{2}\xi \rho_{i \xi}.\nonumber
\end{equation}
We hope $\int_{-\infty}^{\infty}\rho_i(\xi)d\xi=0$,
which leads to
 \begin{align}\label{bE:2.5new1}
m_i(\xi)=(i\sigma-(1+\lambda))G_{i}+\frac{1+\lambda}{2}(\xi G_{i})_{\xi}, \quad G_{i}(\xi):=\int_{-\infty}^{\xi}\rho_i(\eta)d\eta.
\end{align}
Plugging \eqref{*abE:2.2} and \eqref{bE:2.5new1} into $\eqref{bE:2.1}_2$, we derive a formal hierarchy of ODEs satisfied by the functions $\bar \rho$ and $\rho_i$, $i=1, \cdots, k.$ Define the source term
\begin{equation}\label{e2.7}
S(\tilde \rho_k)=\tilde m_{kt}+\Big(\frac{ \tilde m_k^2}{ \tilde \rho_k}+p( \tilde \rho_k)\Big)_{x}+\frac{1}{(1+t)^{\lambda}}  \tilde m_k.
\end{equation}

\begin{lemma}\label{lemma2.2}
For $k \in \mathbb N^+$, the source term $S(\tilde \rho_k)$ given by \eqref{e2.7} satisfies
\begin{align}\label{2.8new}
S(\tilde \rho_k)=&\sum_{i=1}^{k}(1+t)^{-\frac{1+\lambda}{2}-i\sigma}\Big[(P^{\prime}(\bar \rho)G_{i \xi})_{\xi}+\frac{1+\lambda}{2}(\xi G_{i})_{\xi}+c_{1,i}G_{i}-c_{2,i}G_{i-1}+h_{i\xi}\Big]
\notag\\
&+(1+t)^{-\frac{1+\lambda}{2}-(k+1)\sigma}R_k,
\end{align}
where
\begin{equation}
c_{1,i}=i\sigma-(1+\lambda), \quad c_{2,i}=(i\sigma-1)((i-1)\sigma-(1+\lambda)),\ i=1, 2, \cdots, k, \label{e2.10new}\nonumber
\end{equation}
$G^0, h_i$ and $R_k$ are given by \eqref{2.21new}-\eqref{2.20new} and \eqref{2.18new} below.
\end{lemma}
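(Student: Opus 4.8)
The plan is to substitute the ansatz \eqref{*abE:2.2} into the definition \eqref{e2.7} and sort the resulting terms according to their power of $(1+t)$, using the scaling relations $\partial_x=(1+t)^{-\frac{1+\lambda}{2}}\partial_\xi$ and, for a function of $\xi$ alone, $\partial_t=-\frac{1+\lambda}{2}\,\frac{\xi}{1+t}\,\partial_\xi$. It is convenient to regard $\bar\rho$ and $\bar m$ as the $i=0$ entries of the sums: by the GPME relation in \eqref{E:1.002} one has $\bar m=(1+t)^{-\sigma/2}m_0(\xi)$ with $p(\bar\rho)_\xi=-m_0$, and expressing $m_0$ through \eqref{bE:2.5new1} with $i=0$ determines the auxiliary function $G^0$ appearing in the statement. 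One then records the exponent arithmetic, using $\sigma=1-\lambda$: applying $\partial_t$ to $(1+t)^{-(i+\frac12)\sigma}m_i(\xi)$ produces terms of order $(1+t)^{-\frac{1+\lambda}{2}-(i+1)\sigma}$ (both the coefficient-derivative part and the $\xi$-transport part carry this power); applying the damping factor $(1+t)^{-\lambda}$ to the same term gives $(1+t)^{-\frac{1+\lambda}{2}-i\sigma}m_i(\xi)$; and $\partial_x$ applied to the flux $\frac{\tilde m_k^2}{\tilde\rho_k}+p(\tilde\rho_k)$ produces $(1+t)^{-\frac{1+\lambda}{2}}\partial_\xi$ of it.

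The second step is to Taylor-expand the flux about the diffusion wave. Writing $p(\tilde\rho_k)=p(\bar\rho)+p'(\bar\rho)(\tilde\rho_k-\bar\rho)+\tfrac12 p''(\bar\rho)(\tilde\rho_k-\bar\rho)^2+\cdots$ and expanding $\frac{\tilde m_k^2}{\tilde\rho_k}$ around $(\bar\rho,\bar m)$, the order-zero part $p(\bar\rho)_x+\frac{1}{(1+t)^\lambda}\bar m$ vanishes identically by \eqref{E:1.002}, while $\partial_x\big(p'(\bar\rho)(1+t)^{-i\sigma}\rho_i\big)=(1+t)^{-\frac{1+\lambda}{2}-i\sigma}(p'(\bar\rho)G_{i\xi})_\xi$ after using $\rho_i=G_{i\xi}$; this is the principal term of the $i$-th bracket. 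Crucially, since both $p(\tilde\rho_k)$ and $\frac{\tilde m_k^2}{\tilde\rho_k}$ enter \eqref{e2.7} only under $\partial_x$, every one of their contributions is automatically a total $\xi$-derivative of the form $(1+t)^{-\frac{1+\lambda}{2}-j\sigma}(\,\cdot\,)_\xi$; collecting, at a fixed order $i\le k$, all such pieces other than the principal one just extracted defines the potential $h_i$, while everything of order $>k$ is gathered into $R_k$. The contributions that are \emph{not} total $\xi$-derivatives come only from $\tilde m_{kt}$ and from the damping term: the damping gives $(1+t)^{-\frac{1+\lambda}{2}-i\sigma}m_i(\xi)$, and inserting $m_i=c_{1,i}G_i+\frac{1+\lambda}{2}(\xi G_i)_\xi$ from \eqref{bE:2.5new1} supplies exactly the $c_{1,i}G_i+\frac{1+\lambda}{2}(\xi G_i)_\xi$ portion of the bracket; the time-derivative term contributes at order $i$ only through its $(i-1)$-entry, namely $-(i-\tfrac12)\sigma\,m_{i-1}-\frac{1+\lambda}{2}\xi\,\partial_\xi m_{i-1}$, into which one substitutes $m_{i-1}=c_{1,i-1}G_{i-1}+\frac{1+\lambda}{2}(\xi G_{i-1})_\xi$ and then repeatedly uses identities of the type $\xi^p\partial_\xi^q G_{i-1}=(\text{total }\xi\text{-derivative})+(\text{constant})\,G_{i-1}$ to split it into a constant multiple of $G_{i-1}$ plus a total derivative; the multiple is exactly $-c_{2,i}$ and the total-derivative part is absorbed into $h_{i\xi}$. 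Summing the order-$i$ pieces over $i=1,\dots,k$ produces the bracket in \eqref{2.8new}, and the residue is $(1+t)^{-\frac{1+\lambda}{2}-(k+1)\sigma}R_k$.

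The main obstacle is the algebraic bookkeeping of this last reduction: one must verify that the non-derivative residue of $-(i-\tfrac12)\sigma\,m_{i-1}-\frac{1+\lambda}{2}\xi\,\partial_\xi m_{i-1}$ collapses to precisely $-c_{2,i}G_{i-1}$ with $c_{2,i}=(i\sigma-1)((i-1)\sigma-(1+\lambda))$ — for $i=1$ this amounts to the identity $-(\tfrac12)\sigma m_0-\frac{1+\lambda}{2}\xi\,\partial_\xi m_0=-\lambda(1+\lambda)G^0+(\text{total }\xi\text{-derivative})$, and the general case follows the same integration-by-parts pattern with $m_{i-1}$ rewritten through $G_{i-1}$ — and that the collected remainder $R_k$, built from the Taylor tails of $p$, the full expansion of $\frac{\tilde m_k^2}{\tilde\rho_k}$, and all products of $\bar\rho$- and $\rho_j$-derivatives whose total order in $(1+t)^{-\sigma}$ exceeds $k$, is a smooth function of $\xi$ carrying the Gaussian-type spatial decay inherited from Lemma \ref{lamm-01} (together with the analogous decay of the profiles $\rho_j$, established when they are constructed in Section \ref{section2}). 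Once the terms are organized in this hierarchy, \eqref{2.8new} follows, with $G^0$, $h_i$ and $R_k$ identified with the explicit expressions in \eqref{2.21new}--\eqref{2.20new} and \eqref{2.18new} below.
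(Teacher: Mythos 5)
Your proposal follows essentially the same route as the paper's proof: substitute the ansatz, Taylor-expand $p(\tilde\rho_k)$ and $\tilde m_k^2/\tilde\rho_k$ about the diffusion wave, collect powers of $(1+t)^{-\sigma}$ behind the common factor $(1+t)^{-\frac{1+\lambda}{2}}$, use the damping term with $m_i=c_{1,i}G_i+\frac{1+\lambda}{2}(\xi G_i)_\xi$ from \eqref{bE:2.5new1}, and rewrite the time derivative of the $(i-1)$-st momentum coefficient through $G_{i-1}$ so that its non-derivative residue is exactly $-c_{2,i}G_{i-1}$, with all total $\xi$-derivatives absorbed into $h_{i\xi}$ and the order $>k$ overflow into $R_k$ — this is precisely the paper's computation leading to \eqref{2.16new} and then \eqref{2.8new}. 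The only slight looseness is your claim that $G_0$ is determined by \eqref{bE:2.5new1} with $i=0$ (taken literally that is an ODE for $G_0$), whereas the paper simply sets $G_0=P(\bar\rho)_\xi/(1+\lambda)$ so that $-c_{2,1}G_0=-\lambda P(\bar\rho)_\xi$, which is exactly the $i=1$ identity you verify, so the discrepancy is harmless.
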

\begin{proof}
From $\eqref{e2.7}$, the Taylor expansion gives that
\begin{align}\label{2.9new}
P(\tilde \rho_k)
=&P(\bar \rho)+\sum_{i=1}^{k}(1+t)^{-i\sigma}\sum_{j=1}^ih_{1,i,j}+(1+t)^{-(k+1)\sigma}R_{1,k},
\end{align}
and
\begin{align}\label{2.10new}
\frac{{\tilde m_k}^2}{\tilde \rho_k}
=&(1+t)^{-\sigma}\frac{(P(\bar \rho)_{\xi})^2}{\bar \rho}+\sum_{i=2}^{k}(1+t)^{-i\sigma}\sum_{j=1}^{i-1}h_{2,i,j}
+\sum_{i=3}^{k}(1+t)^{-i\sigma}\sum_{j=2}^{i-1}h_{3,i,j}+(1+t)^{-(k+1)\sigma}R_{2,k},
\end{align}
where
\begin{align}
h_{1,i,j}&=\frac{P^{(j)}(\bar \rho)}{j!}\sum_{l_1+\cdots+l_j=i, l_1,\cdots, l_j\geq 1}\rho_{l_1}\cdots \rho_{l_j},\quad 1\leq j\leq i,\nonumber\\
h_{2,i,j}&=\frac{(-1)^{j}(P(\bar \rho)_{\xi})^2}{(\bar \rho)^{j+1}}\sum_{l_1+\cdots+l_j=i-1, l_1,\cdots, l_j\geq 1}\rho_{l_1}\cdots \rho_{l_j}
\notag\\
&\quad+\frac{2(-1)^{j}P(\bar \rho)_{\xi}}{(\bar \rho)^{j}}\sum_{l_1+\cdots+l_j=i-1,l_1,\cdots, l_j\geq 1}m_{l_1}\rho_{l_2}\cdots \rho_{l_j}, \quad 1\leq j\leq i-1, \nonumber\\
h_{3,i,j}&=\frac{(-1)^{j}}{(\bar \rho)^{j-1}}\sum_{l_1+\cdots+l_j=i-1,l_1,\cdots, l_j\geq 1}m_{l_1}m_{l_2}\rho_{l_3}\cdots \rho_{l_j},\quad 2\leq j\leq i-1,\nonumber
\end{align}
and the remainder terms $R_{1,k}, R_{2,k}$ are some functions depending on $\bar \rho$ and $\rho_l$ with $l \in \{1, \cdots, k\}$.
%It is noted that
%\begin{align}\label{a1}
%\frac{{\tilde m_1}^2}{\tilde \rho_1}
%=&(1+t)^{-\sigma}\frac{(P(\bar \rho)_{\xi})^2}{\bar \rho}+(1+t)^{-2\sigma}R_{2,1},
%\end{align}
%and
%\begin{align}\label{a2}
%\frac{{\tilde m_2}^2}{\tilde \rho_2}
%=&(1+t)^{-\sigma}\frac{(P(\bar \rho)_{\xi})^2}{\bar \rho}+(1+t)^{-2\sigma}h_{2,2,1}
%+(1+t)^{-3\sigma}R_{2,2}.
%\end{align}

On the other hand, the direct computations give that
\begin{align}\label{2.11new}
&\tilde m_{kt}+\frac{1}{(1+t)^{\lambda}} \tilde m_k
\notag\\
=&-(1+t)^{-\frac{1+\lambda}{2}}P(\bar \rho)_{\xi}+(1+t)^{-\frac{1+\lambda}{2}-\sigma}\Big[-\lambda P(\bar \rho)_{\xi}+\frac{1+\lambda}{2}(\xi P(\bar \rho)_{\xi})_{\xi}+m_1\Big]
\notag\\
&+\sum_{i=2}^k(1+t)^{-\frac{1+\lambda}{2}-i\sigma}\Big[m_i-(i\sigma-1) m_{i-1}+\frac{1+\lambda}{2}(\xi m_{i-1})_{\xi}\Big]+(1+t)^{-\frac{1+\lambda}{2}-(k+1)\sigma}R_{3,k},
\end{align}
where
\begin{equation}
R_{3,k}=-(k+\frac12)\sigma m_{k}-\frac{1+\lambda}{2}\xi m_{k \xi}.\nonumber
\end{equation}
Thus, we use $\eqref{2.9new}-\eqref{2.10new}$ and $\eqref{2.11new}$ to obtain
\begin{align}\label{2.16new}
S(\tilde \rho_k)
=&(1+t)^{-\frac{1+\lambda}{2}-\sigma}\Big[m_1+\Big(P^{(1)}(\bar \rho)\rho_{1}\Big)_{\xi}-\lambda P(\bar \rho)_{\xi}+\frac{1+\lambda}{2}(\xi P(\bar \rho)_{\xi})_{\xi}+\Big(\frac{(P(\bar \rho)_{\xi})^2}{\bar \rho}\Big)_{\xi}\Big]
\notag\\
&+\sum_{i=2}^k(1+t)^{-\frac{1+\lambda}{2}-i\sigma}\Big[m_i+\Big(P^{(1)}(\bar \rho)\rho_{i}\Big)_{\xi}+\tilde h_{i\xi}-(i\sigma-1) m_{i-1}\Big]
\notag\\
&+(1+t)^{-\frac{1+\lambda}{2}-(k+1)\sigma}R_k,
\end{align}
where
\begin{align}
&\tilde h_i=\sum_{j=2}^ih_{1,i,j}+\sum_{j=1}^{i-1}h_{2,i,j}
+\sum_{j=2}^{i-1}h_{3,i,j}+\frac{1+\lambda}{2}\xi m_{i-1},\quad \mbox{for}\quad i=2, 3, \cdots, k,\nonumber\\
&R_k=(R_{1,k}+R_{2,k})_{\xi}+R_{3,k}.\label{2.18new}
\end{align}
Moreover, substituting \eqref{bE:2.5new1} into \eqref{2.16new} yields
\begin{align}
S(\tilde \rho_k)
=&(1+t)^{-\frac{1+\lambda}{2}-\sigma}\Big[\Big(P^{(1)}(\bar \rho)\rho_{1}\Big)_{\xi}+\frac{1+\lambda}{2}(\xi G_1)_{\xi}-2\lambda G_1-\lambda P(\bar \rho)_{\xi}+\Big(\frac{1+\lambda}{2}\xi P(\bar \rho)_{\xi}+\frac{(P(\bar \rho)_{\xi})^2}{\bar \rho}\Big)_{\xi}\Big]
\notag\\
&+\sum_{i=2}^k(1+t)^{-\frac{1+\lambda}{2}-i\sigma}\Big[\Big(P^{(1)}(\bar \rho)\rho_{i}\Big)_{\xi}+\frac{1+\lambda}{2}(\xi G_i)_{\xi}+c_{1,i}G_i-c_{2,i}G_{i-1}+h_{i\xi}\Big]
\notag\\
&+(1+t)^{-\frac{1+\lambda}{2}-(k+1)\sigma}R_k,\notag
\end{align}
where
\begin{align}
&c_{1,i}=i\sigma-(1+\lambda), \label{a.2.22}\\
&c_{2,i}=(i\sigma-1)((i-1)\sigma-(1+\lambda)),\notag\\
&h_i=\tilde h_i-\frac{1+\lambda}{2}(i\sigma-1)\xi G_{i-1} \label{2.21new}
\end{align}
with $i=2, \cdots, k$. Note that $c_{1,1}=-2\lambda$, we may supply
\begin{align}
G_0&=\frac{P(\bar \rho)_{\xi}}{1+\lambda},\\
h_1&=\frac{1+\lambda}{2}\xi P(\bar \rho)_{\xi}+\frac{(P(\bar \rho)_{\xi})^2}{\bar \rho},\label{2.20new}
\end{align}
so that \eqref{2.8new} holds with $i=1, \cdots, k$.
Therefore, the proof of Lemma \ref{lemma2.2} is completed.
\end{proof}

Motivated by Lemma \ref{lemma2.2}, we define the hierarchy of ODEs as
\begin{align}\label{e2.24}
(P^{\prime}(\bar \rho)G_{i \xi})_{\xi}+\frac{1+\lambda}{2}(\xi G_{i})_{\xi}+c_{1,i}G_{i}=c_{2,i}G_{i-1}-h_{i\xi}, \quad \mbox{for}\quad i=1,2,\cdots,k,
\end{align}
so that $S(\tilde \rho_{k})=O(1)(1+t)^{-\frac{1+\lambda}{2}-(k+1)\sigma}.$
We will seek for the solution $G_{i}\in \chi^{l}(\mathbb{R})$ to $\eqref{e2.24}$,
where
\begin{align}\label{a.2.29}
\chi^{l}(\mathbb{R})=\{f:\xi^s\partial_{\xi}^r f\in L^2(\mathbb{R}), \quad  \forall \,r, s \in \{0,1,\cdots ,l\}\}
\end{align}
equipped with the norm
\begin{align}
\|f\|_{\chi^{l}(\mathbb{R})}=\Big(\sum_{0\leq s, r \leq l}\int_{\mathbb{R}} (\xi^{s}\partial_{\xi}^{r}f)^2d\xi\Big)^{\frac{1}{2}}.\notag
\end{align}
Then integrating $\eqref{e2.24}$ with respect to $\xi$ over $\mathbb R$ gives that
\begin{align}\label{bbe:2.19}
\int_{\mathbb{R}} G_{i}d\xi=\frac{c_{2,i}}{c_{1,i}}\int_{\mathbb{R}} G_{i-1}d\xi\quad \mbox{for} \quad i=1, 2, \cdots, k.
\end{align}

We obtain the existence of the smooth solution $G_{i}$ of \eqref{e2.24} and \eqref{bbe:2.19} as follows.
%Then we expect that $\rho_i$ satisfies that for some $l_0$,
%\begin{equation}\label{assumption}
%G_{i}(\xi) \in \chi^{l_0}(\mathbb{R}).
%\end{equation}
\begin{lemma}\label{newlemma2.1}
Let $\delta=|\rho_+-\rho_-|\ll 1$, there exists a solution $G_{i} \in \chi^{m_i}(\mathbb{R})$ to \eqref{e2.24} and \eqref{bbe:2.19} for large integer  $m_i>0$. Furthermore, it holds that
\begin{align}\label{bE:2.23}
\|G_{i}\|_{\chi^{m_i}(\mathbb{R})}\leq C\delta\notag
\end{align}
and
 \begin{equation}\label{es}
 \|\xi^{s_0}\partial^{r_0}_{\xi}G_{i}\|_{L^{\infty}}\leq C\delta,\notag
 \end{equation}
 where $0\leq s_0\leq m_i$ and $0\leq r_0\leq m_i-1$.
\end{lemma}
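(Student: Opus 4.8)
The plan is to construct the $G_i$ recursively in $i$, treating \eqref{e2.24} at each step as a linear second‑order ODE on $\mathbb R$ for $G_i$ whose right‑hand side $F_i:=c_{2,i}G_{i-1}-h_{i\xi}$ is a known, smooth, rapidly decaying function of size $O(\delta)$. Indeed, assume inductively that $G_1,\dots,G_{i-1}$ have already been built with $\|G_\ell\|_{\chi^{m_\ell}(\mathbb R)}\le C\delta$; since every term entering $h_i$ (through $\tilde h_i$ and \eqref{2.21new}) contains a factor $\bar\rho_\xi$, a factor $\rho_\ell=G_{\ell\xi}$, or a product of two such factors, Lemma \ref{lamm-01} and the inductive hypothesis give $\|F_i\|_{\chi^{m}(\mathbb R)}\le C\delta$ for any $m$, at the cost of a finite loss of weight and regularity relative to $G_{i-1}$. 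The base case $i=1$ is immediate, because $G_0=\frac{P(\bar\rho)_\xi}{1+\lambda}$ and $h_1$ are explicit in $\bar\rho$, hence $O(\delta)$ with Gaussian decay by Lemma \ref{lamm-01}.

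First I would record the algebraic fact that drives the whole argument: for every index $i$ occurring in the hierarchy (i.e.\ $1\le i\le k\le k_0(\lambda)$) one has $i<k(\lambda)=\frac{3(1+\lambda)}{4(1-\lambda)}$, which is exactly the statement that
\[
\tfrac{1+\lambda}{4}+c_{1,i}=i\sigma-\tfrac{3(1+\lambda)}{4}<0,\qquad c_{1,i}\ne 0 .
\]
Together with the uniform positivity of $P'(\bar\rho)$ — $\bar\rho$ stays near the positive constants $\rho_\pm$ by Lemma \ref{lamm-01} — this makes the operator $\mathcal L_iG:=(P'(\bar\rho)G_\xi)_\xi+\tfrac{1+\lambda}{2}(\xi G)_\xi+c_{1,i}G$ coercive: integrating by parts,
\[
-\int_{\mathbb R}(\mathcal L_iG)\,G\,d\xi=\int_{\mathbb R}P'(\bar\rho)G_\xi^2\,d\xi-\Big(\tfrac{1+\lambda}{4}+c_{1,i}\Big)\int_{\mathbb R}G^2\,d\xi\ \ge\ c_0\|G\|_{H^1}^2 .
\]
This is the only place where the restriction $i\le k_0(\lambda)$, the precise form of $k(\lambda)$, and (so that $k_0(\lambda)\ge1$) the assumption $\lambda>\tfrac17$ are used — already $i=1$ forces $\tfrac{1+\lambda}{4}+c_{1,1}=\tfrac{1-7\lambda}{4}<0$.

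Given coercivity, I would obtain existence in one of two equivalent ways: either freeze $P'(\bar\rho)$ at a constant, invert the resulting Ornstein--Uhlenbeck–type operator (explicitly diagonalised by Hermite functions) on a Gaussian‑weighted Sobolev space, and close a fixed point for the $O(\delta)$ perturbation $P'(\bar\rho)-\mathrm{const}$; or, more elementarily, solve \eqref{e2.24} with homogeneous Dirichlet data on each $[-L,L]$ — where $\tfrac{1+\lambda}{2}\xi$ is bounded, so Lax--Milgram applies to the uniformly coercive form — obtaining $G_i^L$ with $\|G_i^L\|_{H^1([-L,L])}\le C\|F_i\|_{H^{-1}}\le C\delta$, and pass to the weak limit as $L\to\infty$. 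Either way $G_i\in H^1(\mathbb R)$ solves \eqref{e2.24}, and smoothness of the coefficients and of $F_i$ upgrades this to $G_i\in C^\infty$. The weighted and higher‑order bounds then come from the energy method applied directly on $\mathbb R$: differentiating \eqref{e2.24} $r$ times and testing against $\xi^{2s}\partial_\xi^r G_i$, the transport term $\tfrac{1+\lambda}{2}(\xi G)_\xi$ again yields, after integration by parts, a definite contribution $\big(\tfrac{1+\lambda}{4}(2r+1-2s)+c_{1,i}\big)\int_{\mathbb R}\xi^{2s}(\partial_\xi^r G_i)^2\,d\xi$ plus terms with strictly fewer $\xi$‑weights; the commutators created by $(P'(\bar\rho))_\xi=O(\delta)$ are absorbed into the coercive part, the lower‑weight terms are controlled by the induction, and the source contributes $\le C\delta\,\|\xi^s\partial_\xi^r G_i\|$. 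An induction on $r$ and then on $s$ yields $\|G_i\|_{\chi^{m_i}(\mathbb R)}\le C\delta$ for any prescribed $m_i$. Integrating \eqref{e2.24} over $\mathbb R$, now legitimate because $G_i$ and $h_i$ decay, and using $c_{1,i}\ne0$ shows that \eqref{bbe:2.19} holds automatically, while the pointwise bounds follow from the one‑dimensional Sobolev inequality $\|\xi^{s_0}\partial_\xi^{r_0}G_i\|_{L^\infty}^2\le C\|\xi^{s_0}\partial_\xi^{r_0}G_i\|\big(\|\xi^{s_0}\partial_\xi^{r_0+1}G_i\|+\|\xi^{s_0-1}\partial_\xi^{r_0}G_i\|\big)\le C\delta^2$ for $s_0\le m_i$, $r_0\le m_i-1$.

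\textbf{The main obstacle} is the existence step. Because the transport coefficient $\tfrac{1+\lambda}{2}\xi$ multiplying $G_i'$ is unbounded, the flat bilinear form associated with $\mathcal L_i$ is not continuous on $H^1(\mathbb R)\times H^1(\mathbb R)$, so one genuinely has to work either in a Gaussian‑weighted functional framework or through the exhausting domains $[-L,L]$, keeping careful track of the Dirichlet boundary terms in every energy identity; and in all approaches the decisive input is the sign (equivalently spectral) condition $c_{1,i}<-\tfrac{1+\lambda}{4}$, i.e.\ $i<k(\lambda)$, without which $\mathcal L_i$ need not be invertible and no solution of the prescribed size exists. Everything else — the weighted and higher‑order energy estimates and the automatic verification of the normalization \eqref{bbe:2.19} — is routine once this point is settled.
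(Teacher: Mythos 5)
Your argument is essentially correct for the range of indices the paper actually uses, but it is a genuinely different route from the paper's. You solve \eqref{e2.24} variationally: the exact identity $-\int(\mathcal L_iG)G\,d\xi=\int P'(\bar\rho)G_\xi^2\,d\xi-\bigl(\tfrac{1+\lambda}{4}+c_{1,i}\bigr)\int G^2\,d\xi$ is right, and the sign condition $\tfrac{1+\lambda}{4}+c_{1,i}=i\sigma-\tfrac{3(1+\lambda)}{4}<0$, i.e.\ $i<k(\lambda)$, indeed holds for all $i\le k_0(\lambda)$ and is exactly where $\lambda>\tfrac17$ enters; Lax--Milgram on $[-L,L]$ plus weak limits, the weighted bootstrap, the automatic recovery of \eqref{bbe:2.19} by integrating the equation, and the weighted Sobolev step are all workable. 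The paper instead passes to the Fourier side after freezing the elliptic coefficient at $P'(\rho_+)$: the transform satisfies a first-order ODE in $\eta$ with the normalization $\mathscr F_i(0)=0$ encoding the zero-mass condition, which is solved by an explicit integrating-factor formula, and the $O(\delta)$ coefficient difference $P'(\rho_+)-P'(\bar\rho)$ is handled by a contraction in $\chi^{m_i}(\mathbb R)$ (low frequencies via the explicit representation on $[-M,M]$, high frequencies absorbed by the $\eta^2$ term). What each buys: your approach avoids Fourier analysis and makes the threshold $k(\lambda)$ conceptually transparent as a coercivity/spectral condition; the paper's approach never uses that sign condition, so it constructs $G_i$ beyond $i<k(\lambda)$ as well (relevant to the paper's conjecture that the expansion holds to any order), at the price of the Schwartz-class iteration bookkeeping.

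Two caveats. First, your closing claim that without $c_{1,i}<-\tfrac{1+\lambda}{4}$ ``no solution of the prescribed size exists'' is unsubstantiated and, in view of the paper's construction, misleading: loss of coercivity of the flat quadratic form does not preclude solvability, so your proof covers the lemma only as it is needed in Section 3 ($i\le k_0(\lambda)$), not for arbitrary order. Second, in the weighted higher-order estimates the coefficient $\tfrac{1+\lambda}{4}(2r+1-2s)+c_{1,i}$ is \emph{not} sign-definite once $r\ge1$ and $s$ is small (e.g.\ $s=0$, $r\ge1$ for $i$ near $k(\lambda)$); calling it a ``definite contribution'' is inaccurate. The induction still closes, but only because at level $r$ that term multiplies $\int\xi^{2s}(\partial_\xi^rG_i)^2\,d\xi$, which is already controlled from the previous derivative level (all weights being established at level $r$ before passing to $r+1$), while the genuinely coercive sign is needed only at the base case $r=s=0$; this ordering of the induction, and the justification of the weighted integrations by parts for the limit (cutoffs or boundary terms on $[-L,L]$), should be stated explicitly.
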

The detailed proof is left in the  Appendix.

\

%Note that $c_{1,i}+\frac{1+\lambda}{4}\leq 0$  is equivalent to $\lambda \geq \frac{4i-3}{4i+3}$. Thus we can have $G_1$ if $\lambda\geq \frac17$, and  $G_1$, $G_2$ if $\lambda\geq \frac{5}{11}$, and so on.  In fact, denote the interval $I_i$ by
%\[I_i=\Big[\frac{4i-3}{4i+3},\frac{4(i+1)-3}{4(i+1)+3}\Big), \quad
%\mathop{\cup}_{i=1}^{\infty}I_i=\Big[\frac{1}{7},1\Big).\]
%For any $\lambda \in [\frac17,1)$, there exists a unique $k_0:=k_0(\lambda)\in \mathbf{N}^+$ such that $\lambda \in I_{k_0}$ and $G_i$ exists for $i=1,\cdots,k_0$, that is,
%the time asymptotic expansion is

Thanks to Lemmas \ref{lemma2.2}-\ref{newlemma2.1}, we get the estimates of source term $S(\tilde \rho_{k})$ in \eqref{e2.7}.

\begin{lemma}\label{blemma2.2}
It holds that
\begin{align}\label{bE:2.23-3}
S(\tilde \rho_{k})=O(1)\delta (1+t)^{-\frac{1+\lambda}{2}-(k+1)\sigma}
\end{align}
and
\begin{align}\label{bE:2.23-4}
\|\partial_t^j\partial_x^lS(\tilde \rho_{k})\|_{L^2_x(\mathbb{R})}^2 \leq C\delta^2 (1+t)^{-2j-2(k+1)\sigma-(l+\frac{1}{2})(1+\lambda)}, \quad j,\,l\geq 0.
\end{align}
\end{lemma}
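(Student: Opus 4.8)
The plan is to read off the conclusion from the algebraic identity already established in Lemma \ref{lemma2.2}, using crucially that the profiles $G_1,\dots,G_k$ have been defined to solve the hierarchy \eqref{e2.24}. Indeed, once $G_i$ solves \eqref{e2.24}, every bracketed coefficient of $(1+t)^{-\frac{1+\lambda}{2}-i\sigma}$ in the representation \eqref{2.8new} vanishes identically, so the source collapses to its last term,
\[
S(\tilde\rho_k)=(1+t)^{-\frac{1+\lambda}{2}-(k+1)\sigma}\,R_k(\xi),\qquad R_k=(R_{1,k}+R_{2,k})_{\xi}+R_{3,k},
\]
with $R_{1,k},R_{2,k},R_{3,k}$ as in the proof of Lemma \ref{lemma2.2} and \eqref{2.18new}. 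Hence both \eqref{bE:2.23-3} and \eqref{bE:2.23-4} reduce to: (i) a uniform-in-$t$ bound of the weighted $L^\infty$ and $L^2$ norms of $R_k$ and its $\xi$-derivatives, and (ii) a bookkeeping computation of how $\partial_t,\partial_x$ act through the scaling variable $\xi=x(1+t)^{-\frac{1+\lambda}{2}}$.

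For step (i) I would first expand the structural form of $R_k$. By construction $R_{1,k}$ is the Taylor remainder of $P(\tilde\rho_k)$ past order $k$ in powers of $(1+t)^{-\sigma}$, a finite sum of $P^{(j)}(\bar\rho)$ times products $\rho_{l_1}\cdots\rho_{l_j}$; $R_{2,k}$ is the analogous remainder for $\tilde m_k^2/\tilde\rho_k$, built from $\bar\rho$, $P(\bar\rho)_\xi$, the $\rho_l$'s and the $m_l$'s; and $R_{3,k}=-(k+\tfrac12)\sigma m_k-\tfrac{1+\lambda}{2}\xi m_{k\xi}$. Using $\rho_l=G_{l\xi}$ and formula \eqref{bE:2.5new1}, which writes $m_l$ through $G_l,\xi G_l$ and their first $\xi$-derivatives, $R_k$ becomes a finite algebraic expression in $\bar\rho$, in finitely many $\xi$-derivatives of $\bar\rho$, in $\xi$, and in finitely many $\xi$-derivatives of $G_1,\dots,G_k$. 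Every monomial of $R_k$ contains at least one factor from $\{\rho_l,m_l,\bar m,P(\bar\rho)_\xi\}$, each of which is $O(\delta)$ — the first two by Lemma \ref{newlemma2.1}, the last two by Lemma \ref{lamm-01} (which moreover gives the Gaussian factor $e^{-c\xi^2}$ for $\bar\rho$-derivatives) — while $\bar\rho$ stays bounded above and below for $\delta$ small, so $P^{(j)}(\bar\rho)$ and $1/\bar\rho$ are bounded. Differentiating $R_k$ in $\xi$ and multiplying by powers of $\xi$ preserves this structure: the needed weighted $\xi$-derivatives of the $G_i$ are furnished by $G_i\in\chi^{m_i}$ with $m_i$ large (Lemma \ref{newlemma2.1}, including the weighted $L^\infty$ bounds $\|\xi^{s_0}\partial_\xi^{r_0}G_i\|_{L^\infty}\le C\delta$), and the polynomial $\xi$-weights are absorbed either by the Gaussians attached to $\bar\rho$-derivatives or by the built-in weights in the $G_i$-estimates. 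Consequently, for every fixed pair $(s,r)$ of nonnegative integers,
\[
\|\xi^{s}\partial_\xi^{r}R_k\|_{L^\infty(\mathbb{R})}+\|\xi^{s}\partial_\xi^{r}R_k\|_{L^2(\mathbb{R})}\le C\delta,
\]
with $C=C(k,s,r)$ independent of $t$.

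For step (ii), \eqref{bE:2.23-3} is then immediate from the displayed collapse of $S(\tilde\rho_k)$. For \eqref{bE:2.23-4} I would differentiate $S(\tilde\rho_k)=(1+t)^{-\frac{1+\lambda}{2}-(k+1)\sigma}R_k(\xi)$ directly, noting $\partial_x\xi=(1+t)^{-\frac{1+\lambda}{2}}$ and $\partial_t\xi=-\tfrac{1+\lambda}{2}\xi(1+t)^{-1}$: each $\partial_x$ contributes a factor $(1+t)^{-\frac{1+\lambda}{2}}$ together with $\partial_\xi$, and each $\partial_t$ contributes $(1+t)^{-1}$ together with a linear combination of the identity and $\xi\partial_\xi$. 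Hence $\partial_t^j\partial_x^l S(\tilde\rho_k)=(1+t)^{-\frac{1+\lambda}{2}-(k+1)\sigma-j-\frac{l(1+\lambda)}{2}}\,\mathcal R_{j,l}(\xi)$, where $\mathcal R_{j,l}$ is a finite linear combination of terms $\xi^{s}\partial_\xi^{r}R_k$ with $s,r\le j+l$, so $\|\mathcal R_{j,l}\|_{L^2(\mathbb{R})}\le C\delta$ by step (i). Changing variables $x\mapsto\xi$ (so $dx=(1+t)^{\frac{1+\lambda}{2}}d\xi$) gives
\[
\|\partial_t^j\partial_x^l S(\tilde\rho_k)\|_{L^2_x(\mathbb{R})}^2=(1+t)^{-(1+\lambda)-2(k+1)\sigma-2j-l(1+\lambda)+\frac{1+\lambda}{2}}\|\mathcal R_{j,l}\|_{L^2_\xi}^2\le C\delta^2(1+t)^{-2j-2(k+1)\sigma-(l+\frac12)(1+\lambda)},
\]
which is precisely \eqref{bE:2.23-4}.

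The only non-routine point is the uniform-in-$t$ weighted bound on $R_k$ in step (i): one must verify that the polynomial $\xi$-weights produced by repeated differentiation never outpace the integrability and Gaussian decay supplied by Lemma \ref{lamm-01} for the $\bar\rho$-factors and by Lemma \ref{newlemma2.1} for the $G_i$-factors. Since $R_k$ is a finite sum of explicit monomials and one only applies the Leibniz rule finitely often, I expect this to be careful bookkeeping rather than a genuine obstacle; all the analytic substance has already been concentrated in Lemmas \ref{lamm-01} and \ref{newlemma2.1}.
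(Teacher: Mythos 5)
Your proposal is correct and follows essentially the same route as the paper: since the $G_i$ solve the hierarchy \eqref{e2.24}, the bracketed terms in \eqref{2.8new} vanish, $S(\tilde\rho_k)$ collapses to $(1+t)^{-\frac{1+\lambda}{2}-(k+1)\sigma}R_k$, and the estimates follow from the weighted bounds of Lemmas \ref{lamm-01} and \ref{newlemma2.1} together with the scaling bookkeeping in $\xi=x(1+t)^{-\frac{1+\lambda}{2}}$ (the paper compresses this into ``direct computations''). The only minor imprecision is writing $R_k=R_k(\xi)$: the Taylor remainders $R_{1,k},R_{2,k}$ may still carry bounded factors of the form $(1+t)^{-\sigma j}$, but $\partial_t$ acting on these only produces additional $(1+t)^{-1}$ decay, so your estimates are unaffected.
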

\begin{proof}
It follows from \eqref{2.8new} and Lemma \ref{newlemma2.1} that
\begin{align}
S(\tilde \rho_{k})=(1+t)^{-\frac{1+\lambda}{2}-(k+1)\sigma}R_{k},\notag
\end{align}
where $R_{k}$ is given by \eqref{2.18new}. Then, \eqref{bE:2.23-3} and \eqref{bE:2.23-4} can be obtained by direct computations. Thus, the proof of Lemma \ref{blemma2.2} is completed.
\end{proof}

\section{The estimates of the remainder terms}\label{section3}

This section is devoted to Theorem \ref{theorem1.1} by the classical energy method with the continuation argument based on the
local existence and the a priori estimates.
For any $\lambda \in (\frac17,1)$, let
\[k_0=:k_0(\lambda)=\sup_{0<\varepsilon<\frac12}[k(\lambda)-\varepsilon]=\sup_{0<\varepsilon<\frac12}\Big[\frac{3(1+\lambda)}{4(1-\lambda)}-\varepsilon\Big]\] and the time asymptotic expansion is
\begin{align}\label{*1abE:2.2}
\begin{cases}
& \rho=\bar \rho+\sum\limits_{i=1}^{k_0}(1+t)^{-i\sigma}\rho_i(\xi)+P_{k_0}=:\tilde \rho_{k_0}+P_{k_0},\\
& m=\bar m+\sum\limits_{i=1}^{k_0} (1+t)^{-(i+\frac{1}{2})\sigma}m_i(\xi)+Q_{k_0}=:\tilde m_{k_0}+Q_{k_0}.
\end{cases}
\end{align}
We shall show that the remainder $P_{k_0}$ decays faster than $(1+t)^{-k_0\sigma}$. Denote
\begin{equation}\label{bE:2.5-1}
y=-\int_{-\infty}^{x}P_{k_0}(r,t) dr,\notag
\end{equation}
then
\begin{equation*}
y_{x}=-P_{k_0}, \quad y_t= Q_{k_0}.
\end{equation*}
Thus the system \eqref{bE:2.1} can be rewritten as a quasilinear wave equation for $y$:
\begin{align}\label{bE:2.5-2}
\begin{cases}
y_{tt}-(P^{\prime}(\tilde \rho_{k_0})y_{x})_{x}+\frac{y_t}{(1+t)^{\lambda}}=g_1+g_2+S(\tilde \rho_{k_0}),\\
(y,y_t)({x},0)=(y_0, y_1)({x}),
\end{cases}
\end{align}
where
\begin{align}\label{bE:2.5-3}
&g_1=-(P( \rho)-P(\tilde \rho_{k_0})+P^{\prime}(\tilde \rho_{k_0})y_{x})_{x},\quad g_2=-\Big(\frac{ m^2}{ \rho}-\frac{\tilde m_{k_0}^2}{\tilde \rho_{k_0}}\Big)_{x}.\notag
\end{align}
%and $S(\tilde \rho_{k})$ is given by \eqref{e2.7}.

Motivated by the work of \cite{Nishihara}, we seek for the solution of \eqref{bE:2.5-2} in the following solution space
\[X_T=:\{y\in C([0,T);H^{3}(\mathbb R)), y_t \in C([0,T);H^{2}(\mathbb R))\}.\]
Since the local existence of the
solution of \eqref{bE:2.5-2} can be proved by the standard iteration method, see \cite{Matsumura-1997}, the main effort in this section is to establish
the a priori estimates for the solution.

For any $T\in (0,+\infty)$, define
\begin{align*}
N(T)^2=\sup_{0\leq t\leq T}\{\|y\|^2+\sum_{i=0}^2(1+t)^{(i+1)(1+\lambda)}\|(\partial _x^iy_t)^2+(\partial _x^iy_x)^2\|\}.
\end{align*}
We assume
\begin{align}\label{ass}
N(T)\leq
\begin{cases}
\epsilon,\quad\mbox{if} \quad k(\lambda)\notin \mathbb N^+,\\
\epsilon\ln (1+T),\quad\mbox{if} \quad k(\lambda)\in \mathbb N^+,
\end{cases}
\end{align}
where $\epsilon$ is sufficiently small and will be determined later. Then it follows from Sobolev inequality $\|\partial _{x}^if\|_{L^{\infty}}\leq C\|\partial _{x}^if\|^{\frac{1}{2}}\|\partial _{x}^{i+1}f\|^{\frac{1}{2}}$ for $i=0,1$ that
\begin{align}\label{bE:2.22}
\|\partial _{x}^iy_t\|_{L^{\infty}}+\|\partial _{x}^iy_{x}\|_{L^{\infty}}\leq
\begin{cases}
C\epsilon (1+t)^{-\frac{1}{4}(2i+3)(1+\lambda)},\quad\mbox{if} \quad k(\lambda)\notin \mathbb N^+,\\
C\epsilon (1+t)^{-\frac{1}{4}(2i+3)(1+\lambda)}\ln (1+t),\quad\mbox{if} \quad k(\lambda)\in \mathbb N^+.
\end{cases}
\end{align}

We first establish the following basic energy estimate. For abbreviation, let  $(\tilde \rho, \tilde m)$ stand for $(\tilde \rho_{k_0}, \tilde m_{k_0})$ in what follows.

\begin{lemma}\label{lemma2.2n}
For any $T>0,$ assume that $y(x,t) \in X_T$ is the solution of \eqref{bE:2.5-2}. If $\epsilon$  and $\delta$ are small, then it holds that
\begin{align}\label{zbE:2.25new}
&\int_{\mathbb{R}}\Big[(1+t)^{\beta+1}(y_t^2+y_{x}^2)+(1+t)^{\beta-\lambda}y^2\Big]d x+\int_0^t\int_{\mathbb{R}}(1+\tau)^{\beta+1-\lambda}y_t^2d xd\tau
\notag\\
&\quad+\int_0^t\int_{\mathbb{R}}(1+\tau)^{\beta}y_{x}^2d xd\tau\leq  C(N(0)^2+\delta^2+\delta\epsilon),
\end{align}
where $\beta<\lambda$.
\end{lemma}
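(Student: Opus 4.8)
The plan is a weighted energy estimate built from two multipliers, in the spirit of Nishihara's treatment of the damped wave equation, with time weights tuned so that the dissipation recovers exactly the quantities on the left of \eqref{zbE:2.25new}. Two preliminary facts will be used throughout: since $\tilde\rho=\bar\rho+\sum_{i=1}^{k_0}(1+t)^{-i\sigma}\rho_i$ with $\bar\rho$ taking values in the short interval between $\rho_-$ and $\rho_+$ (Lemma~\ref{lamm-01}) and the correctors uniformly small when $\delta\ll1$ (Lemma~\ref{newlemma2.1}), there are constants $0<a_0\le a_1$ with $a_1/a_0=1+O(\delta)$ and $a_0\le P'(\tilde\rho)\le a_1$; and Lemmas~\ref{lamm-01} and~\ref{newlemma2.1} give $\|\partial_tP'(\tilde\rho)(t)\|_{L^\infty}\le C\delta(1+t)^{-1}$, so the coefficient is uniformly elliptic and slowly varying. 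Multiplying \eqref{bE:2.5-2} by $(1+t)^{\beta+1}y_t$ and integrating over $\mathbb R$ yields, after integration by parts in $x$, a differential identity whose left side is $\frac{d}{dt}\int\frac12(1+t)^{\beta+1}(y_t^2+P'(\tilde\rho)y_x^2)\,dx+\int(1+t)^{\beta+1-\lambda}y_t^2\,dx$ and whose right side consists of the weight-derivative ``loss'' $\frac{\beta+1}{2}\int(1+t)^{\beta}(y_t^2+P'(\tilde\rho)y_x^2)\,dx$, the term $\frac12\int(1+t)^{\beta+1}\partial_tP'(\tilde\rho)\,y_x^2\,dx$, and the forcing $\int(1+t)^{\beta+1}y_t(g_1+g_2+S(\tilde\rho))\,dx$. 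Multiplying \eqref{bE:2.5-2} instead by $c_1(1+t)^{\gamma}y$ with a small constant $c_1>0$ and an exponent $\gamma\in(\beta,\min\{\lambda,\beta+1-\lambda\})$ (a nonempty interval because $\beta<\lambda<1$) gives a second identity whose left side is $\frac{d}{dt}\int(c_1(1+t)^{\gamma}yy_t+\frac{c_1}{2}(1+t)^{\gamma-\lambda}y^2)\,dx+c_1\int(1+t)^{\gamma}P'(\tilde\rho)y_x^2\,dx+\frac{c_1(\lambda-\gamma)}{2}\int(1+t)^{\gamma-\lambda-1}y^2\,dx$ --- the last term being a \emph{favorable} $y^2$-dissipation precisely because $\gamma<\lambda$ --- and whose right side is $c_1\int(1+t)^{\gamma}y_t^2\,dx+c_1\gamma\int(1+t)^{\gamma-1}yy_t\,dx$ together with the forcing $c_1\int(1+t)^{\gamma}y(g_1+g_2+S(\tilde\rho))\,dx$.

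\emph{Combining the identities.} For $t\ge T_1$, with $T_1=T_1(\lambda,\beta,\gamma,c_1,\delta)$ chosen large, every weight-derivative loss is absorbed: the $y_t^2$-losses carry the weights $(1+t)^{\beta}$ and $(1+t)^{\gamma}$, both $\le(1+t)^{\beta+1-\lambda}$ (using $\lambda<1$ and $\gamma\le\beta+1-\lambda$), hence are dominated by the damping dissipation once $c_1$ is small; the $y_x^2$-loss $\frac{\beta+1}{2}(1+t)^{\beta}P'(\tilde\rho)y_x^2$ is dominated by $c_1(1+t)^{\gamma}P'(\tilde\rho)y_x^2$ because $\gamma>\beta$, as soon as $(1+T_1)^{\gamma-\beta}\ge(\beta+1)a_1/(c_1a_0)$; and $c_1\gamma\int(1+t)^{\gamma-1}|yy_t|$ is split by a weighted Young inequality into a part absorbed by the favorable $y^2$-dissipation and a part absorbed by the damping dissipation (again using $\gamma\le\beta+1-\lambda$). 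After these absorptions, adding the two identities yields, for $t\ge T_1$, a differential inequality of the form $\frac{d}{dt}\mathcal E(t)+c\int\bigl[(1+t)^{\beta+1-\lambda}y_t^2+(1+t)^{\beta}y_x^2\bigr]\,dx\le(\text{forcing terms})$, where $\mathcal E(t)=\int\bigl[\frac12(1+t)^{\beta+1}(y_t^2+P'(\tilde\rho)y_x^2)+c_1(1+t)^{\gamma}yy_t+\frac{c_1}{2}(1+t)^{\gamma-\lambda}y^2\bigr]\,dx$ satisfies, by Young's inequality on the cross term together with $\gamma\le\beta+1-\lambda$, $\gamma\ge\beta$, $c_1$ small and $a_1/a_0=1+O(\delta)$, the bounds $\mathcal E(t)\ge c\int[(1+t)^{\beta+1}(y_t^2+y_x^2)+(1+t)^{\beta-\lambda}y^2]\,dx$ and $\mathcal E(0)\le CN(0)^2$. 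On a fixed bounded interval $[0,T_1]$ all time weights are comparable to constants, so the same two multipliers with constant weights, combined with Gronwall's inequality, control the restriction to $[0,T_1]$ of the whole left-hand side of \eqref{zbE:2.25new} by $C(T_1)N(0)^2$ plus forcing.

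\emph{Forcing terms and conclusion.} Since $\rho=\tilde\rho-y_x$ and $m=\tilde m+y_t$, each of $g_1,g_2$ is a sum of expressions at least quadratic in $(y_x,y_{xx},y_t)$, or carrying a decaying coefficient of size $O(\delta)$ built from $\tilde\rho,\tilde m$ and their first derivatives (whose decay and localization follow from Lemmas~\ref{lamm-01} and~\ref{newlemma2.1}). Using the a priori bounds \eqref{ass}--\eqref{bE:2.22}, their contributions to the two identities, after integration in time, are bounded by $C\delta^2+C\delta\epsilon$ together with terms absorbed into the left-hand side, every time integral being convergent precisely because $\beta<\lambda$ forces the exponents that appear to be $<-1$. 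For the source term, Cauchy--Schwarz and Lemma~\ref{blemma2.2} bound $\int_0^t\!\int(1+\tau)^{\beta+1}|y_t\,S(\tilde\rho)|\,dxd\tau$ by $\eta\int_0^t\!\int(1+\tau)^{\beta+1-\lambda}y_t^2\,dxd\tau+C_\eta\delta^2\int_0^t(1+\tau)^{\beta+\frac12+\frac\lambda2-2(k_0+1)\sigma}\,d\tau$, and the choice $k_0=\sup_{0<\varepsilon<1/2}[k(\lambda)-\varepsilon]$ forces $(k_0+1)\sigma\ge k(\lambda)\sigma=\frac{3(1+\lambda)}{4}$, so this last exponent is $\le\beta-1-\lambda<-1$ and the integral is finite; the $c_1(1+\tau)^{\gamma}yS(\tilde\rho)$ contribution is handled the same way using the $(1+\tau)^{\gamma-\lambda}y^2$ and $(1+\tau)^{\gamma}y_x^2$ terms. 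Integrating the combined differential inequality over $[T_1,t]$, adding the $[0,T_1]$ bound, and invoking the two properties of $\mathcal E$ above together with $N(T)\le\epsilon$ (or $\epsilon\ln(1+T)$) from \eqref{ass}, gives \eqref{zbE:2.25new}.

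I expect the main obstacle to be the combination step, and within it the $y_x^2$ weight-derivative loss $\frac{\beta+1}{2}(1+t)^{\beta}P'(\tilde\rho)y_x^2$ produced by the first multiplier: it cannot be absorbed by a $y_x^2$-dissipation carrying the same weight without destroying the positivity of $\mathcal E$, which is why the second multiplier has to be taken with a strictly larger time weight $\gamma>\beta$ while still satisfying $\gamma<\lambda$ (so that its $y^2$-byproduct is a favorable dissipation rather than a new loss). The sandwich $\beta<\gamma<\lambda$ --- available exactly because $\beta<\lambda$ --- together with $\lambda<1$, which makes the high-weight dissipations overpower all losses for large $t$ (the losses acquiring a factor $(1+t)^{-(1-\lambda)}$), and the near-constancy of $P'(\tilde\rho)$ for $\delta$ small, is what makes the scheme close; it also explains why the hypothesis is $\beta<\lambda$ rather than $\beta\le\lambda$.
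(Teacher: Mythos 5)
Your skeleton is essentially the paper's: the same two weighted multipliers (the equation tested against $y$ with a time weight of exponent strictly below $\lambda$, and against $y_t$ with weight of exponent $\beta+1$), the same use of $\beta<\lambda$ to turn the $y^2$ byproduct of the damping into a favorable dissipation, and the same Cauchy--Schwarz treatment of the source term via Lemma \ref{blemma2.2} together with the fact \eqref{eq3.20}. Your bookkeeping variant --- an exponent $\gamma\in(\beta,\min\{\lambda,\beta+1-\lambda\})$ with a small constant $c_1$, a large-time threshold $T_1$ and Gronwall on $[0,T_1]$ --- plays exactly the role of the paper's large constant $C_1$ multiplying the $y$-identity and the large shift $\alpha$ in the weight $(\alpha+t)^{\beta}$ (compare \eqref{bE:2.24}, \eqref{bE:2.28}, \eqref{bE:2.37}); both devices achieve the same absorption, so this part is fine.

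The step you gloss over is the one the paper actually has to work for: the pairing of $g_1,g_2$ with $(1+t)^{\beta+1}y_t$. These are not just ``quadratic terms to be absorbed'': by \eqref{bE:2.34}, $g_2$ contains $-\frac{2m}{\rho}y_{xt}-\frac{m^2}{\rho^2}y_{xx}$, and $g_1$ contains $y_{xx}$ (or, after one integration by parts, produces $y_{xt}$); neither $y_{xx}$ nor $y_{xt}$ occurs on the left-hand side of \eqref{zbE:2.25new}, so ``absorbed into the left-hand side'' is impossible for these pieces, and they are not $O(\delta^2+\delta\epsilon)$ either, even though their coefficients are small and decaying. The paper removes them structurally: $\int(1+t)^{\beta+1}g_1y_t\,dx$ is rewritten through the pressure potential $\int_{\tilde\rho}^{\tilde\rho-y_x}P(s)\,ds$ as an exact time derivative plus terms absorbable by the $y_x^2$-dissipation (see \eqref{bE:2.32}), while for $g_2$ the terms $\frac{m^2}{\rho^2}y_{xx}y_t$ and $\frac{2m}{\rho}y_{xt}y_t$ are handled by extracting $\frac{d}{dt}\int\frac{m^2}{\rho^2}y_x^2\,dx$ (which is then added to the energy) and by $\int\frac{2m}{\rho}y_{xt}y_t\,dx=-\int\bigl(\frac{m}{\rho}\bigr)_xy_t^2\,dx$ (see \eqref{bE:2.35-1}--\eqref{3.16w}). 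Your fallback --- estimating these pieces directly with the a priori bounds on $\|y_{xx}\|$ and $\|y_{xt}\|$ contained in \eqref{ass}--\eqref{bE:2.22} --- does yield time-integrable quantities (the exponents are strictly below $-1$, logs included), but their size is $O(\epsilon^3)$ or $O((\delta+\epsilon)\epsilon^2)$, not $C(N(0)^2+\delta^2+\delta\epsilon)$; this would still close the bootstrap in Theorem \ref{theorem1.1} after the choice $\epsilon\sim N(0)+\delta$, but it does not give the bound as stated. To prove the lemma in the form claimed, you need to import the paper's exact-derivative manipulations at precisely this point.
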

\begin{proof}
Multiplying $\eqref{bE:2.5-2}$ by $(\alpha+t)^{\beta}y$ and integrating the result over $\mathbb{R}$,  we obtain
\begin{align}\label{bE:2.24}
&\frac{d}{dt}\int_{\mathbb{R}}[(\alpha+t)^{\beta}y_ty+\frac{(\alpha+t)^{\beta}}{2(1+t)^{\lambda}}y^2]d x+(\alpha+t)^{\beta}\int_{\mathbb{R}}P^{\prime}(\tilde \rho)y_{x}^2d x+\frac{(\alpha+t)^{\beta}}{(1+t)^{1+\lambda}}\frac{\lambda \alpha-\beta+(\lambda-\beta)t}{\alpha+t}\int_{\mathbb{R}}\frac{1}{2}y^2d x
\notag\\
=&(\alpha+t)^{\beta}\int_{\mathbb{R}}y_t^2d x+\beta (\alpha+t)^{\beta-1}\int_{\mathbb{R}}y_tyd x+(\alpha+t)^{\beta}\int_{\mathbb{R}}g_1yd x
\notag\\
&+(\alpha+t)^{\beta}\int_{\mathbb{R}}g_2yd x
+(\alpha+t)^{\beta}\int_{\mathbb{R}}S(\tilde \rho)yd x,
\end{align}
where $\alpha$ is a positive constant to be determined later.
From Lemmas \ref{lamm-01}, \ref{newlemma2.1}, the a priori assumption \eqref{bE:2.22} and the expansion \eqref{*1abE:2.2}, we have
\begin{align}\label{bE:2.25}
(\alpha+t)^{\beta}\int_{\mathbb{R}}g_1yd x&=(\alpha+t)^{\beta}\int_{\mathbb{R}}(P(\rho)-P(\tilde \rho)+P^{\prime}(\tilde \rho)y_{x})y_{x}d x\leq C\epsilon(\alpha+t)^{\beta}\int_{\mathbb{R}}y_{x}^2d x
\end{align}
and
\begin{align}\label{bE:2.27}
&(\alpha+t)^{\beta}\int_{\mathbb{R}}g_2yd x=-(\alpha+t)^{\beta}\int_{\mathbb{R}}(\frac{m^2}{\rho}-\frac{\tilde m^2}{\tilde \rho})_xyd x
\notag\\
\leq &C(\delta+\epsilon)\frac{(\alpha+t)^{\beta}}{(1+t)^{1+\lambda}}\int_{\mathbb{R}}y^2d x +C(\delta+\epsilon)(1+t)^{\beta+\lambda-1}\int_{\mathbb{R}}(y_x^2+y_t^2)d x.
\end{align}
 In addition, it is easy to check that
\begin{align}\label{be3.9new}
&\beta (\alpha+t)^{\beta-1}\int_{\mathbb{R}}y_tyd x
\leq  \nu_1\frac{(\alpha+t)^{\beta}}{(1+t)^{1+\lambda}}\int_{\mathbb{R}}y^2d x+C(\nu_1)(\alpha+t)^{\beta+\lambda-1}\int_{\mathbb{R}}y_t^2d x.
\end{align}
Moreover, from the a priori assumption \eqref{ass} and the estimates \eqref{bE:2.23-4}, for the case of $k(\lambda) \notin \mathbb N^+$ we have
\begin{align}\label{be3.9n}
&(\alpha+t)^{\beta}\int_{\mathbb{R}}S(\tilde \rho)yd x \leq  (\alpha+t)^{\beta}\|y\|\|S(\tilde \rho)\|
\leq
 C\delta \epsilon(1+t)^{-1-\sigma[(k_0+1)-k(\lambda)]-(\lambda-\beta)}
 \end{align}
 and for the case of $k(\lambda) \in \mathbb N^+$ we have
 \begin{align}\label{nbe3.9new}
&(\alpha+t)^{\beta}\int_{\mathbb{R}}S(\tilde \rho)yd x \leq  (\alpha+t)^{\beta}\|y\|\|S(\tilde \rho)\|
\leq
 C\delta \epsilon(1+t)^{-1-\sigma[(k_0+1)-k(\lambda)]-(\lambda-\beta)}\ln (1+t).
 \end{align}
Substituting \eqref{bE:2.25}-\eqref{nbe3.9new} into \eqref{bE:2.24}  and choosing $\nu_1$ small enough give that
\begin{align}\label{bE:2.28}
&\frac{d}{dt}\int_{\mathbb{R}}[(\alpha+t)^{\beta}y_ty+\frac{(\alpha+t)^{\beta}}{2(1+t)^{\lambda}}y^2]d x+\frac{(\alpha+t)^{\beta}}{2}\int_{\mathbb{R}}P^{\prime}(\tilde \rho)y_{x}^2d x+ \frac{(\alpha+t)^{\beta}}{(1+t)^{1+\lambda}}\int_{\mathbb{R}}\frac{\lambda-\beta}{4}y^2d x
\notag\\
\leq &C(\alpha+t)^{\beta}\int_{\mathbb{R}}y_t^2d x+C\delta \epsilon (1+t)^{-1-\sigma[(k_0+1)-k(\lambda)]-(\lambda-\beta)}(1+\ln (1+t)),
\end{align}
where we choose $\beta<\lambda$ such that
\begin{align*}
\frac{\lambda \alpha-\beta+(\lambda-\beta)t}{\alpha+t}=\lambda-\beta+\frac{\beta(\alpha-1)}{\alpha+t}\geq \lambda-\beta>0.
\end{align*}

We multiply $\eqref{bE:2.5-2}$ by $(\alpha+t)^{\beta+1}y_t$ and integrate the result over $\mathbb{R}$ to obtain
\begin{align}\label{bE:2.31}
& \frac{d}{dt}\int_{\mathbb{R}}[(\alpha+t)^{\beta+1}\frac{1}{2}y_t^2+(\alpha+t)^{\beta+1}P^{\prime}(\tilde \rho)\frac{1}{2}y_{x}^2]d x+\frac{(\alpha+t)^{\beta+1}}{(1+t)^{\lambda}}\Big[1-\frac{(1+t)^{\lambda}}{2(\alpha+t)}\Big]\int_{\mathbb{R}}y_t^2d x
\notag\\
%&\leq  (\alpha+t)^{\beta}\int_{\mathbb{R}}(\lambda P^{\prime}(\tilde \rho)-\frac{1+\lambda}{4}P^{\prime\prime}(\tilde \rho)\tilde \rho^{\prime}\xi )y_{x}^2d x+(\alpha+t)^{\beta+1}\int_{\mathbb{R}}g_1y_td x+(\alpha+t)^{\beta+1}\int_{\mathbb{R}}g_2y_td x
%\notag\\
%&\quad+(\alpha+t)^{\beta+1}\int_{\mathbb{R}}S(\tilde \rho)y_td x
%\notag\\
\leq & C(\alpha+t)^{\beta}\int_{\mathbb{R}}y_{x}^2d x+(\alpha+t)^{\beta+1}\int_{\mathbb{R}}g_1y_td x+(\alpha+t)^{\beta+1}\int_{\mathbb{R}}g_2y_td x
+(\alpha+t)^{\beta+1}\int_{\mathbb{R}}S(\tilde \rho)y_td x.
\end{align}
A direct computation yields that
\begin{align}\label{bE:2.32}
(\alpha+t)^{\beta+1}\int_{\mathbb{R}}g_1y_td x
&=\frac{d}{dt}\Big\{(\alpha+t)^{\beta+1}\int_{\mathbb{R}}\Big[-\int_{\tilde \rho}^{\tilde \rho-y_{x}}P(s)ds-P(\tilde \rho)y_{x}+\frac{1}{2}P^{\prime}(\tilde \rho)y_{x}^2\Big]d x\Big\}
\notag\\
&\quad-(1+\beta)(\alpha+t)^{\beta}\int_{\mathbb{R}}\Big[-\int_{\tilde \rho}^{\tilde \rho-y_{x}}P(s)ds-P(\tilde \rho)y_{x}+\frac{1}{2}P^{\prime}(\tilde \rho)y_{x}^2\Big]d x
\notag\\
&\quad+(\alpha+t)^{\beta+1}\int_{\mathbb{R}}[P(\tilde \rho-y_{x})-P(\tilde \rho)-P^{\prime}(\tilde \rho)y_{x}-\frac{1}{2}P^{\prime\prime}(\tilde \rho)y_{x}^2]\tilde \rho_td x
\notag\\
&\leq \frac{d}{dt}\Big\{(\alpha+t)^{\beta+1}\int_{\mathbb{R}}\Big[-\int_{\tilde \rho}^{\tilde \rho-y_{x}}P(s)ds-P(\tilde \rho)y_{x}+\frac{1}{2}P^{\prime}(\tilde \rho)y_{x}^2\Big]d x\Big\}
\notag\\
&\quad+C(\epsilon+\delta) (\alpha+t)^{\beta}\int_{\mathbb{R}}y_{x}^2d x.
\end{align}

Next, we estimate the term $(\alpha+t)^{\beta+1}\int_{\mathbb{R}}g_2y_td x$. Note that
\begin{align}\label{bE:2.34}
g_2=-\Big(\frac{m^2}{\rho}-\frac{\tilde m^2}{\tilde \rho}\Big)_{x}=-\frac{m^2}{\rho^2}y_{xx}-\frac{2m}{\rho}y_{{x}t}+\Big(\frac{m^2}{\rho^2}-\frac{\tilde m^2}{\tilde \rho^2}\Big)\tilde \rho_{x}-\Big(\frac{2m}{\rho}-\frac{2\tilde m}{\tilde \rho}\Big)\tilde m_{x},
\end{align}
and
\begin{align}\label{bE:2.35}
\begin{cases}
&|m|\leq |\tilde m|+|y_t|\leq C(\delta+\epsilon)(1+t)^{\frac{\lambda-1}{2}};\\
&|\rho_t|+|m_{x}|\leq |\tilde \rho_t|+|y_{{x}t}|+|\tilde m_{x}| \leq C(\delta+\epsilon)(1+t)^{-1};\\
&|\rho_{x}|+|m_t|\leq |\tilde \rho_{x}|+|y_{xx}|+|\tilde m_t|+|y_{tt}|\leq C(\delta+\epsilon)(1+t)^{-\frac{1+\lambda}{2}},
\end{cases}
\end{align}
we have
\begin{align}\label{bE:2.35-1}
&(\alpha+t)^{\beta+1}\int_{\mathbb{R}}\frac{m^2}{\rho^2}y_{xx}y_td x
\notag\\
=&-\frac{1}{2}\frac{d}{dt}\Big[(\alpha+t)^{\beta+1}\int_{\mathbb{R}}\frac{m^2}{\rho^2}y_{x}^2d x\Big]+\frac{\beta+1}{2}(\alpha+t)^{\beta}\int_{\mathbb{R}}\frac{m^2}{\rho^2}y_{x}^2d x+\frac{1}{2}(\alpha+t)^{\beta+1}\int_{\mathbb{R}}\Big(\frac{m^2}{\rho^2}\Big)_ty_{x}^2d x
\notag\\
&-(\alpha+t)^{\beta+1}\int_{\mathbb{R}}\Big(\frac{m^2}{\rho^2}\Big)_{x}y_{x}y_td x
\notag\\
\geq &-\frac{1}{2}\frac{d}{dt}\Big[(\alpha+t)^{\beta+1}\int_{\mathbb{R}}\frac{m^2}{\rho^2}y_{x}^2d x\Big]-C(\delta+\epsilon)(\alpha+t)^{\beta}\int_{\mathbb{R}}y_{x}^2d x-C(\delta+\epsilon)(\alpha+t)^{\beta}\int_{\mathbb{R}}y_{t}^2d x
\end{align}
and
\begin{align}\label{bE:2.35-2}
&(\alpha+t)^{\beta+1}\int_{\mathbb{R}}\Big[\frac{2m}{\rho}y_{{x}t}-\Big(\frac{m^2}{\rho^2}-\frac{\tilde m^2}{\tilde \rho^2}\Big)\tilde \rho_{x}+\Big(\frac{2m}{\rho}-\frac{2\tilde m}{\tilde \rho}\Big)\tilde m_{x}\Big]y_td x
\notag\\
=& -(\alpha+t)^{\beta+1}\int_{\mathbb{R}}\Big(\frac{m}{\rho}\Big)_{x}y_t^2d x+(\alpha+t)^{\beta+1}\int_{\mathbb{R}}\Big[-\Big(\frac{m^2}{\rho^2}-\frac{\tilde m^2}{\tilde \rho^2}\Big)\tilde \rho_{x}+\Big(\frac{2m}{\rho}-\frac{2\tilde m}{\tilde \rho}\Big)\tilde m_{x}\Big]y_td x
\notag\\
\leq & C(\delta+\epsilon)(\alpha+t)^{\beta}\int_{\mathbb{R}}y_t^2d x+C(\delta+\epsilon)^2(\alpha+t)^{\beta}\int_{\mathbb{R}}y_{x}^2d x.
\end{align}
Thus, it follows from \eqref{bE:2.34} and \eqref{bE:2.35-1}-\eqref{bE:2.35-2} that
\begin{align}\label{3.16w}
(\alpha+t)^{\beta+1}\int_{\mathbb{R}}g_2y_td x\leq& \frac{d}{dt}\Big[(\alpha+t)^{\beta+1}\int_{\mathbb{R}}\frac{m^2}{\rho^2}\frac{1}{2}y_{x}^2d x\Big]+C(\delta+\epsilon)(\alpha+t)^{\beta}\int_{\mathbb{R}}y_{x}^2d x
\notag\\
&+C(\delta+\epsilon)(\alpha+t)^{\beta}\int_{\mathbb{R}}y_{t}^2d x.
\end{align}
In addition, it is straightforward to check from \eqref{bE:2.23-4} that
\begin{align}\label{bE:2.38}
(\alpha+t)^{\beta+1}\int_{\mathbb{R}}S(\tilde \rho)y_td x&\leq \nu_1\frac{(\alpha+t)^{\beta+1}}{(1+t)^{\lambda}}\int_{\mathbb{R}}y_t^2d x+C(\nu_1)\int_{\mathbb{R}}(\alpha+t)^{\beta+1}(1+t)^{\lambda}S^2(\tilde \rho)d x
\notag\\
&\leq \nu_1\frac{(\alpha+t)^{\beta+1}}{(1+t)^{\lambda}}\int_{\mathbb{R}}y_t^2d x+C(\nu_1)\delta^2(1+t)^{-1-2\sigma[(k_0+1)-k(\lambda)]-(\lambda-\beta)}.
\end{align}
Substituting \eqref{bE:2.32} and \eqref{3.16w}-\eqref{bE:2.38} into \eqref{bE:2.31} and choosing $\nu_1$ small enough, together with the fact that $|\xi \tilde \rho^{\prime}| \leq C \delta$,  give that
\begin{align}\label{bE:2.37}
&\frac{1}{2}\frac{d}{dt}\Big[(\alpha+t)^{\beta+1} \int_{\mathbb{R}}\Big[y_t^2+P^{\prime}(\tilde \rho)y_{x}^2+\frac{m^2}{\rho^2}y_{x}^2\Big]d x\Big]
+\frac{1}{2}\frac{(\alpha+t)^{\beta+1}}{(1+t)^{\lambda}}\int_{\mathbb{R}}y_t^2d x\notag\\
\leq &\frac{d}{dt}\Big[(\alpha+t)^{\beta+1} \int_{\mathbb{R}}\Big(\int_{\tilde \rho}^{\tilde \rho-y_{x}}P(s)ds-P(\tilde \rho)y_{x}+\frac{1}{2}P^{\prime}(\tilde \rho)y_{x}^2\Big)d x\Big]
+C(\alpha+t)^{\beta}\int_{\mathbb{R}}y_{x}^2d x\notag\\
&+C(\nu_1)\delta^2(1+t)^{-1-2\sigma[(k_0+1)-k(\lambda)]-(\lambda-\beta)}.
\end{align}

Integrating $C_1\times \eqref{bE:2.28}+\eqref{bE:2.37}$ in $(0,t)$ for large constant $C_1$ and choosing $\alpha$ large enough, we obtain
\begin{align}
&(1+t)^{\beta+1}\int_{\mathbb{R}}(y_t^2+y_{x}^2)dx+ \frac{(\alpha+t)^{\beta}}{(1+t)^{\lambda}}\int_{\mathbb{R}}y^2d x+ \int_0^t\int_{\mathbb{R}}\frac{(\alpha+\tau)^{\beta}}{(1+\tau)^{1+\lambda}}y^2d xd\tau+\int_0^t\int_{\mathbb{R}}(1+\tau)^{\beta}y_{x}^2d xd\tau
\notag\\
&\quad+\int_0^t\int_{\mathbb{R}}\frac{(\alpha+\tau)^{\beta+1}}{(1+\tau)^{\lambda}}y_t^2d xd\tau\leq C(N(0)^2+\delta^2+\delta\epsilon),\notag
\end{align}
where we have used the facts that $\beta<\lambda$ and that
\begin{align}\label{eq3.20}
\begin{cases}
(k_0+1)-k(\lambda)>0 \quad \mbox{if}\quad k(\lambda)\notin \mathbb N^+,\\
(k_0+1)-k(\lambda)=0 \quad \mbox{if}\quad k(\lambda)\in \mathbb N^+,
\end{cases}
\end{align}
from $\displaystyle k_0=\sup_{0<\varepsilon<\frac12}[k(\lambda)-\varepsilon]$ with $k(\lambda)=\frac{3(1+\lambda)}{4(1-\lambda)}$.
%Finally, multiplying \eqref{bE:2.37} by $(\alpha+t)^{1-\lambda}$ leads to
%\begin{align}\label{bE:2.39}
%&\frac{1}{2}\frac{d}{dt}\Big\{(\alpha+t)^{\beta+1} \int_{\mathbb{R}}\Big[y_t^2+P^{\prime}(\tilde \rho)y_{x}^2+\frac{m^2}{\rho^2}y_{x}^2\Big]d x\Big\}
%+\frac{1}{2}(\alpha+t)^{\beta+1-\lambda}\int_{\mathbb{R}}y_t^2d x\notag\\
%\leq &\frac{1}{2}\frac{d}{dt}\Big\{(\alpha+t)^{\beta+1} \int_{\mathbb{R}}\Big[\int_{\tilde \rho}^{\tilde \rho-y_{x}}P(s)ds-P(\tilde \rho)y_{x}+\frac{1}{2}P^{\prime}(\tilde \rho)y_{x}^2\Big]d x\Big\}\notag\\
%&- \frac{1}{2}(1-\lambda)(\alpha+t)^{\beta} \int_{\mathbb{R}}\Big[\int_{\tilde \rho}^{\tilde \rho-y_{x}}P(s)ds-P(\tilde \rho)y_{x}+\frac{1}{2}P^{\prime}(\tilde \rho)y_{x}^2\Big]d x\notag\\
%&+\frac{1}{2}(1-\lambda)(\alpha+t)^{\beta} \int_{\mathbb{R}}\Big[y_t^2+P^{\prime}(\tilde \rho)y_{x}^2+\frac{m^2}{\rho^2}y_{x}^2\Big]d x
%\notag\\
%&+C(\alpha+t)^{\beta}\int_{\mathbb{R}}y_{x}^2d x+C(\nu_1)\delta^2(\alpha+t)^{\beta-2(k_0+1)\sigma+\frac{1+\lambda}{2}}.
%\end{align}
%Moreover, integrating \eqref{bE:2.39} over $[0,t]$ and using \eqref{bE:2.36}-\eqref{eq3.20} imply \eqref{zbE:2.25new}.
Thus, the proof of Lemma \ref{lemma2.2n} is completed.
\end{proof}
\begin{lemma}\label{newlemma2.2}
Assume that $y(x,t) \in X_T$ is the solution of \eqref{bE:2.5-2}. If $\epsilon$  and $\delta$ are small, it holds that
\begin{align}\label{zbE:2.25}
&\int_{\mathbb{R}}(1+t)^{\lambda+1}(y_t^2+y_{x}^2)dx+\int_0^t\int_{\mathbb{R}}(1+\tau)^{\lambda}y_{x}^2d xd\tau+\int_0^t\int_{\mathbb{R}}(1+\tau)y_t^2d xd\tau
\notag\\
\leq &
\begin{cases}
C(N(0)^2+\delta^2+\delta\epsilon), \quad\mbox{if} \quad k(\lambda)\notin \mathbb N^+,\\
C(N(0)^2+\delta^2+\delta\epsilon)\ln^2 (1+t),\quad\mbox{if} \quad k(\lambda)\in \mathbb N^+.
\end{cases}
\end{align}
\end{lemma}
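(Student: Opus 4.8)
The plan is to push the weighted estimate of Lemma~\ref{lemma2.2n} up to the borderline exponent. Lemma~\ref{lemma2.2n} supplies \eqref{zbE:2.25new} only for weights $(1+t)^{\beta}$ with $\beta<\lambda$ — at $\beta=\lambda$ the coefficient $\frac{\lambda-\beta}{4}$ of the zeroth–order term degenerates — whereas \eqref{zbE:2.25} needs the weight $(1+t)^{\lambda+1}$ on $y_t^2+y_x^2$, and losing this endpoint is exactly what forces the logarithmic factors. As before I will run two weighted multipliers for \eqref{bE:2.5-2} and couple the resulting inequalities, freely using Lemma~\ref{lemma2.2n} (which gives $\|y\|^2\le C(1+t)^{\lambda-\beta}$ for every $\beta<\lambda$ and $\|(y_t,y_x)\|^2\le C(1+t)^{-1-\beta}$), the source bound \eqref{bE:2.23-4}, and the a priori assumption \eqref{ass}. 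Since $(\alpha+t)^{\lambda+1}\sim(1+t)^{\lambda+1}$, $\frac{(\alpha+t)^{\lambda+1}}{(1+t)^\lambda}\ge (1+t)$ and $(\alpha+t)^\lambda\ge(1+t)^\lambda$, it is enough to produce, on the left, the quantities $(\alpha+t)^{\lambda+1}\|(y_t,y_x)\|^2$, $\int_0^t\frac{(\alpha+\tau)^{\lambda+1}}{(1+\tau)^\lambda}\|y_t\|^2\,d\tau$ and $\int_0^t(\alpha+\tau)^\lambda\|y_x\|^2\,d\tau$.

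\emph{First multiplier $(\alpha+t)^{\lambda+1}y_t$.} Testing \eqref{bE:2.5-2} against $(\alpha+t)^{\lambda+1}y_t$ and integrating over $\mathbb R$ yields the dissipation $\frac{(\alpha+t)^{\lambda+1}}{(1+t)^\lambda}\int y_t^2\,dx$ from the damping, while differentiating the weight in the kinetic and elastic energies leaves $\frac{\lambda+1}{2}(\alpha+t)^\lambda\int(y_t^2+P'(\tilde\rho)y_x^2)\,dx$ on the right, plus the harmless term $\frac12(\alpha+t)^{\lambda+1}\int P''(\tilde\rho)\tilde\rho_t y_x^2\,dx$ (recall $|\tilde\rho_t|\le C\delta(1+t)^{-1}$). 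The $g_1,g_2$ terms are treated exactly as in \eqref{bE:2.32}--\eqref{3.16w} with $\beta+1$ replaced by $\lambda+1$, producing time derivatives plus $O(\delta+\epsilon)(\alpha+t)^\lambda\int(y_t^2+y_x^2)$; since $\lambda<1$ makes $\frac{\alpha+t}{(1+t)^\lambda}$ bounded below by a quantity that is as large as we wish uniformly in $t$ once $\alpha$ is large, the $y_t^2$ parts are absorbed into the dissipation. The source is handled by \eqref{bE:2.23-4}: using $2k(\lambda)\sigma=\frac32(1+\lambda)$ one gets $\int_0^t(\alpha+\tau)^{\lambda+1}(1+\tau)^{\lambda}\|S(\tilde\rho)\|^2\,d\tau\le C\delta^2\int_0^t(1+\tau)^{-1-2\sigma[(k_0+1)-k(\lambda)]}\,d\tau$, which by \eqref{eq3.20} equals $C\delta^2$ when $k(\lambda)\notin\mathbb N^+$ and $C\delta^2\ln(1+t)$ when $k(\lambda)\in\mathbb N^+$. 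The net outcome is
\[
(\alpha+t)^{\lambda+1}\!\int(y_t^2+y_x^2)\,dx+c_0\!\int_0^t\!\frac{(\alpha+\tau)^{\lambda+1}}{(1+\tau)^\lambda}\!\int y_t^2\,dx\,d\tau\ \le\ C\Lambda(t)+C_0\!\int_0^t(\alpha+\tau)^\lambda\!\int y_x^2\,dx\,d\tau,
\]
where $\Lambda(t)$ is $N(0)^2+\delta^2+\delta\epsilon$ (resp. $(N(0)^2+\delta^2+\delta\epsilon)\ln(1+t)$) in the two cases, and crucially $C_0$ is an $O(1)$ constant (essentially $\frac{\lambda+1}{2}$ times the upper bound of $P'(\tilde\rho)$) carrying \emph{no} smallness.

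\emph{Second multiplier $(\alpha+t)^{\lambda}y$.} Testing \eqref{bE:2.5-2} against $(\alpha+t)^\lambda y$ and integrating over $\mathbb R$ manufactures precisely the missing dissipation $(\alpha+t)^\lambda\int P'(\tilde\rho)y_x^2\,dx$. On the right one finds $(\alpha+t)^\lambda\int y_t^2\,dx$, the cross term $\lambda(\alpha+t)^{\lambda-1}\int y_ty\,dx$, boundary terms of the form $(\alpha+t)^\lambda|\int y_ty\,dx|+\frac{(\alpha+t)^\lambda}{(1+t)^\lambda}\int y^2\,dx$, and the $g_1,g_2,S$ contributions estimated as in \eqref{bE:2.25}--\eqref{nbe3.9new}. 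All of these are controlled by Lemma~\ref{lemma2.2n} together with the bound for $(\alpha+t)^{\lambda+1}\|y_t\|^2$ just obtained; the time integrals that arise carry powers $(1+\tau)^{2\lambda-1}$, and $2\lambda-1\le\min\{\lambda,1\}$ (again because $\lambda<1$) makes them absorbable into the dissipations, at the cost of at most a slowly growing factor that is dominated by $\ln^2(1+t)$ in the endpoint case $k(\lambda)\in\mathbb N^+$.

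\emph{Coupling and the main obstacle.} I then add $K$ times the time integral of the second inequality to the first, with $K$ so large that $K$ times the ellipticity constant of $P'(\tilde\rho)$ exceeds $2C_0$; the $y_x^2$–dissipation on the left then absorbs the $C_0\int_0^t(\alpha+\tau)^\lambda\int y_x^2$ error. The leftover $K\int_0^t(\alpha+\tau)^\lambda\int y_t^2$ on the right is absorbed into $c_0\int_0^t\frac{(\alpha+\tau)^{\lambda+1}}{(1+\tau)^\lambda}\int y_t^2$ by enlarging $\alpha$ (depending on $K$) so that $\frac{\alpha+\tau}{(1+\tau)^\lambda}\ge 4K/c_0$ for all $\tau\ge 0$, and the $g_1,g_2$ contributions of $K\times$(second inequality), all carrying the factor $\delta+\epsilon$, close for $\delta,\epsilon$ small. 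What survives on the right is $C\Lambda(t)$ together with the slowly growing remainders of the second multiplier; in the case $k(\lambda)\in\mathbb N^+$, $\Lambda(t)$ already carries $\ln(1+t)$ from the source, and the a priori bound $N(T)\le\epsilon\ln(1+T)$ in \eqref{ass} — which enters the $g_2$– and $\|y\|$–estimates quadratically in that regime — turns this into $\ln^2(1+t)$, which is exactly \eqref{zbE:2.25}. The decisive difficulty is precisely the $O(1)$–coefficient error $C_0\int_0^t(\alpha+\tau)^\lambda\int y_x^2$: unlike the subcritical situation of Lemma~\ref{lemma2.2n}, it is not small and cannot be hidden by choosing $\delta,\epsilon$ small, so it must be balanced against the $y_x$–dissipation created by the second multiplier, and the subsequent back–substitution of $(\alpha+\tau)^\lambda\int y_t^2$ into the damping dissipation is affordable only because $\lambda<1$ makes $(\alpha+t)^{1-\lambda}$ unbounded; everything else is bookkeeping of the borderline time integrals, which is what produces the logarithmic loss.
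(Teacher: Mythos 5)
Your proposal is correct and follows essentially the same route as the paper: the two multipliers $(\alpha+t)^{\lambda}y$ and $(\alpha+t)^{\lambda+1}y_t$, coupling with a large constant and then enlarging $\alpha$ (using $\lambda<1$) to absorb the $O(1)$ error $\int_0^t(\alpha+\tau)^{\lambda}\|y_x\|^2$ and the back-substituted $y_t^2$ terms, the sub-critical estimates of Lemma \ref{lemma2.2n} to control the cross terms $\lambda(\alpha+t)^{\lambda-1}\int y_ty\,dx$ and $(\alpha+t)^{\lambda}\int g_2y\,dx$ that the degenerate $y^2$ term can no longer handle, and the source bound \eqref{bE:2.23-4} with \eqref{eq3.20} producing the logarithms in the case $k(\lambda)\in\mathbb N^+$. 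The only cosmetic difference is that the paper packages the cross-term absorption via Young's inequality with an exponent $\beta\in[2\lambda-1,\lambda)$ fed into the space--time dissipation of Lemma \ref{lemma2.2n}, whereas you invoke the pointwise-in-time decay from that lemma, which amounts to the same bookkeeping.
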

\begin{proof}
Multiplying $\eqref{bE:2.5-2}$ by $(\alpha+t)^{\lambda}y$ and integrating the result over $\mathbb{R}$,  we obtain
\begin{align*}
&\frac{d}{dt}\int_{\mathbb{R}}\Big[(\alpha+t)^{\lambda}y_ty+\frac{(\alpha+t)^{\lambda}}{2(1+t)^{\lambda}}y^2\Big]d x+(\alpha+t)^{\lambda}\int_{\mathbb{R}}P^{\prime}(\tilde \rho)y_{x}^2d x+\lambda (\alpha-1)\frac{(\alpha+t)^{\lambda-1}}{(1+t)^{1+\lambda}}\int_{\mathbb{R}}\frac{1}{2}y^2d x
\notag\\
=&(\alpha+t)^{\lambda}\int_{\mathbb{R}}y_t^2d x+\lambda (\alpha+t)^{\lambda-1}\int_{\mathbb{R}}y_tyd x+(\alpha+t)^{\lambda}\int_{\mathbb{R}}g_1yd x
\notag\\
&+(\alpha+t)^{\lambda}\int_{\mathbb{R}}g_2yd x
+(\alpha+t)^{\lambda}\int_{\mathbb{R}}S(\tilde \rho)yd x.
\end{align*}
 Different from \eqref{bE:2.24}, it is difficult to use the term $\frac{(\alpha+t)^{\lambda-1}}{(1+t)^{1+\lambda}}\int_{\mathbb{R}}y^2d x$ to control $(\alpha+t)^{\lambda-1}\int_{\mathbb{R}}y_tyd x$ and $(\alpha+t)^{\lambda}\int_{\mathbb{R}}g_2yd x$. To overcome this, we choose $2\lambda-1\leq \beta<\lambda$ to get
\begin{align}
\lambda (\alpha+t)^{\lambda-1}\int_{\mathbb{R}}y_tyd x
&\leq \frac{\lambda}{2}\frac{(\alpha+t)^{\beta}}{(1+t)^{1+\lambda}}\int_{\mathbb{R}}y^2d x+\frac{\lambda}{2}(\alpha+t)^{2(\lambda-1)-\beta+1+\lambda}\int_{\mathbb{R}}y_t^2d x
\notag\\
&\leq \frac{\lambda}{2}\frac{(\alpha+t)^{\beta}}{(1+t)^{1+\lambda}}\int_{\mathbb{R}}y^2d x+\frac{\lambda}{2}(\alpha+t)^{\beta+1-\lambda}\int_{\mathbb{R}}y_t^2d x \notag
\end{align}
and
\begin{align}
&(\alpha+t)^{\lambda}\int_{\mathbb{R}}g_2yd x=-(\alpha+t)^{\lambda}\int_{\mathbb{R}}\Big(\frac{m^2}{\rho}-\frac{\tilde m^2}{\tilde \rho}\Big)_xyd x
\notag\\
\leq &C(\delta+\epsilon)\frac{(\alpha+t)^{\beta}}{(1+t)^{1+\lambda}}\int_{\mathbb{R}}y^2d x +C(\delta+\epsilon)(\alpha+t)^{2\lambda-\beta}(1+t)^{\lambda-1}\int_{\mathbb{R}}(y_x^2+y_t^2)d x
\notag\\
\leq &C(\delta+\epsilon)\frac{(\alpha+t)^{\beta}}{(1+t)^{1+\lambda}}\int_{\mathbb{R}}y^2d x +C(\delta+\epsilon)(\alpha+t)^{\lambda}\int_{\mathbb{R}}(y_x^2+y_t^2)d x.\notag
\end{align}
In addition, we use the similar method applied in \eqref{bE:2.25} and \eqref{be3.9n}-\eqref{nbe3.9new} and Lemma \ref{lemma2.2n} to get
\begin{align}\label{bE:2.24new1}
&\frac{d}{dt}\int_{\mathbb{R}}\Big[(\alpha+t)^{\lambda}y_ty+\frac{(\alpha+t)^{\lambda}}{2(1+t)^{\lambda}}y^2\Big]d x+\frac{(\alpha+t)^{\lambda}}{2}\int_{\mathbb{R}}P^{\prime}(\tilde \rho)y_{x}^2d x
\notag\\
\leq &\frac{\lambda}{2}\frac{(\alpha+t)^{\beta}}{(1+t)^{1+\lambda}}\int_{\mathbb{R}}y^2d x+C(\alpha+t)^{\beta+1-\lambda}\int_{\mathbb{R}}y_t^2d x
\notag\\
&+\begin{cases}
C\delta \epsilon (1+t)^{-1-\sigma[(k_0+1)-k(\lambda)]},  \quad  \mbox{if} \quad k(\lambda) \notin \mathbb N^+,\\
C\delta \epsilon (1+t)^{-1}\ln (1+t), \quad \mbox{if} \quad k(\lambda) \in \mathbb N^+,
\end{cases}
\end{align}
where we have used the fact \eqref{eq3.20}.

On the other hand, multiplying $\eqref{bE:2.5-2}$ by $(\alpha+t)^{1+\lambda}y_t$ and integrating the result over $\mathbb{R}$, we use the same argument in \eqref{bE:2.37} to obtain that
\begin{align}\label{bE:2.37new}
&\frac{1}{2}\frac{d}{dt}\Big[(\alpha+t)^{1+\lambda} \int_{\mathbb{R}}\Big(y_t^2+P^{\prime}(\tilde \rho)y_{x}^2+\frac{m^2}{\rho^2}y_{x}^2\Big)d x\Big]
+\frac{1}{2}\frac{(\alpha+t)^{\lambda+1}}{(1+t)^{\lambda}}\int_{\mathbb{R}}y_t^2d x\notag\\
\leq &\frac{d}{dt}\Big[(\alpha+t)^{1+\lambda} \int_{\mathbb{R}}\Big(\int_{\tilde \rho}^{\tilde \rho-y_{x}}P(s)ds-P(\tilde \rho)y_{x}+\frac{1}{2}P^{\prime}(\tilde \rho)y_{x}^2\Big)d x\Big]
\notag\\
&+C(\alpha+t)^{\lambda}\int_{\mathbb{R}}y_{x}^2d x+C\delta^2(1+t)^{-1-2\sigma[(k_0+1)-k(\lambda)]}.
\end{align}
Thanks to \eqref{eq3.20} and the improved estimates \eqref{zbE:2.25new}, integrating $C_2 \times \eqref{bE:2.24new1}+\eqref{bE:2.37new}$ in $(0,t)$ for large constant $C_2$ and choosing $\alpha$ large enough
%gives that
%\begin{align}\label{bE:2.36new1}
%&\int_{\mathbb{R}}[(\alpha+t)^{2\lambda}(y_t^2+y_{x}^2)+y^2]d x+\int_0^t(\alpha+\tau)^{\lambda}\int_{\mathbb{R}}(y_t^2+y_{x}^2)d xd\tau
%\leq  C(N(0)^2+\delta^2+\delta\epsilon).
%\end{align}
%Moreover, multiplying \eqref{bE:2.37new} by $(\alpha+t)^{1-\lambda}$ and using \eqref{bE:2.36new1}
 imply
\eqref{zbE:2.25} directly. Thus the proof of Lemma \ref{newlemma2.2} is completed.
\end{proof}

\begin{lemma}\label{blemma2.3}
Assume that $y(x,t) \in X_T$ is the solution of \eqref{bE:2.5-2}. If $\epsilon$  and $\delta$ are small, it holds that for $s=1,2$,
\begin{align}\label{bE:2.67}
&(\alpha+t)^{(s+1)(1+\lambda)}\int_{\mathbb{R}}[(\partial_{x}^{s+1}y)^2+(\partial_{x}^sy_t)^2]d x+\int_0^t\int_{\mathbb{R}}(\alpha+\tau)^{1+s(1+\lambda)}(\partial_{x}^sy_t)^2d xd\tau
\notag\\
&\quad+\int_0^t\int_{\mathbb{R}}(\alpha+\tau)^{\lambda+s(1+\lambda)}(\partial_{x}^{s+1}y)^2d xd\tau \leq
\begin{cases}
C(N(0)^2+\delta^2+\delta\epsilon), \quad \mbox{if}\quad k(\lambda)\notin \mathbb N^+,\\
C(N(0)^2+\delta^2+\delta\epsilon)\ln^2 (1+t),\quad \mbox{if}\quad k(\lambda)\in \mathbb N^+.
\end{cases}
\end{align}
\end{lemma}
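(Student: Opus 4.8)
\medskip
\noindent\textbf{Proof proposal.}

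The plan is to prove \eqref{bE:2.67} by induction on $s$, with the base case $s=0$ already supplied by Lemmas \ref{lemma2.2n} and \ref{newlemma2.2} (after renaming $y_{x}$ as $\partial_x^{1}y$). Fix $s\in\{1,2\}$, assume \eqref{bE:2.67} holds at order $s-1$, and apply $\partial_x^s$ to the wave equation in \eqref{bE:2.5-2}. Setting $w=\partial_x^s y$ one obtains
\[
w_{tt}-(P^{\prime}(\tilde\rho)w_{x})_{x}+\frac{w_{t}}{(1+t)^{\lambda}}=\partial_x^s g_1+\partial_x^s g_2+\partial_x^s S(\tilde\rho)+\mathcal C_s,
\]
where $\mathcal C_s=\partial_x^s\big((P^{\prime}(\tilde\rho)y_{x})_{x}\big)-\big(P^{\prime}(\tilde\rho)(\partial_x^s y)_{x}\big)_{x}$ is a commutator that is a sum of terms (an $x$-derivative of $\tilde\rho$ of order $\ge 1$) times $\partial_x^{j}y$ with $j\le s$. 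By Lemma \ref{lamm-01} and Lemma \ref{newlemma2.1} every $x$-derivative of $\tilde\rho$ carries a factor $(1+t)^{-(1+\lambda)/2}$, so $\mathcal C_s$ and the $\tilde\rho$- and $\tilde m$-dependent parts of $\partial_x^s g_1,\partial_x^s g_2$ are genuinely lower order in the natural time scaling and will be controlled by the order $s-1$ estimate together with Lemma \ref{newlemma2.2}. Since $y\in X_T$, i.e.\ $y\in H^3$ and $y_t\in H^2$, the order $s=2$ is the top level and $w_x=\partial_x^3 y\in L^2$, $w_t=\partial_x^2 y_t\in L^2$ are at our disposal.

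Next I would run the two-multiplier scheme of Lemmas \ref{lemma2.2n}--\ref{newlemma2.2} on the $w$-equation with the exponents shifted by $s(1+\lambda)$, i.e.\ a basic estimate with a free exponent $\beta<\lambda$ followed by a bootstrap to the sharp exponent. Multiplying by $(\alpha+t)^{\lambda+s(1+\lambda)}w$ and integrating over $\mathbb R$ produces the dissipation $(\alpha+t)^{\lambda+s(1+\lambda)}\int P^{\prime}(\tilde\rho)w_x^2\,dx$; the cross term $\sim(\alpha+t)^{\lambda-1+s(1+\lambda)}\int w_t w\,dx$ is split exactly as in the proof of Lemma \ref{newlemma2.2} by choosing $2\lambda-1\le\beta<\lambda$ and bounding it by $\nu\tfrac{(\alpha+t)^{\beta+s(1+\lambda)}}{(1+t)^{1+\lambda}}\int w^2\,dx+C(\nu)(\alpha+t)^{\beta+1-\lambda+s(1+\lambda)}\int w_t^2\,dx$, the first piece being absorbed by the dissipation $\int_0^t\int(\alpha+\tau)^{\lambda+(s-1)(1+\lambda)}(\partial_x^s y)^2\,dx\,d\tau$ from the induction hypothesis (here $\beta<\lambda$ is used) and the second by the $w_t$-dissipation produced in the next step. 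The terms $\partial_x^s g_1$ and $\partial_x^s g_2$ are treated as in \eqref{bE:2.25}--\eqref{bE:2.27}: the parts in which all derivatives fall on $y$ carry a factor $\|\partial_x^i y_x\|_{L^\infty}+\|\partial_x^i y_t\|_{L^\infty}$, which is $\le C\epsilon(1+t)^{-(2i+3)(1+\lambda)/4}$ (times $\ln(1+t)$ in the integer case) by \eqref{bE:2.22} and hence absorbable for $\epsilon$ small, while all remaining parts gain extra factors of $(1+t)^{-(1+\lambda)/2}$ from $\tilde\rho,\tilde m$; finally $\partial_x^s S(\tilde\rho)$ is estimated by \eqref{bE:2.23-4} and \eqref{eq3.20}, contributing only the harmless $\ln(1+t)$ loss when $k(\lambda)\in\mathbb N^+$.

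Then I would multiply the $w$-equation by $(\alpha+t)^{(s+1)(1+\lambda)}w_t$ and integrate, which yields $\tfrac12\frac{d}{dt}\big[(\alpha+t)^{(s+1)(1+\lambda)}\int(w_t^2+P^{\prime}(\tilde\rho)w_x^2)\,dx\big]$ and the dissipation $\tfrac{(\alpha+t)^{(s+1)(1+\lambda)}}{(1+t)^\lambda}\int w_t^2\,dx$, against $C(\alpha+t)^{\lambda+s(1+\lambda)}\int w_x^2\,dx$, the differentiated $g_1$-term (handled by the same integration by parts and sign structure as in \eqref{bE:2.32}, the error being $C(\delta+\epsilon)(\alpha+t)^{\lambda+s(1+\lambda)}\int w_x^2\,dx$), the differentiated $g_2$-term, and the differentiated source (bounded via \eqref{bE:2.23-4}). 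For the $g_2$-term the dangerous top-order pieces, schematically $\tfrac{m^2}{\rho^2}w_{xx}w_t$ and $\tfrac{2m}{\rho}w_{xt}w_t$ — which at $s=1,2$ involve the top derivative $\partial_x^{s+1}y$ against $\partial_x^s y_t$ with an $O(1)$ coefficient — are integrated by parts exactly as in \eqref{bE:2.35-1}--\eqref{bE:2.35-2}; using the bounds \eqref{bE:2.35}, the coefficient derivatives then produce the smallness factor $\delta+\epsilon$, so the leftover is $\le C(\delta+\epsilon)(\alpha+t)^{\lambda+s(1+\lambda)}\int(w_x^2+w_t^2)\,dx$. One finally forms $C_s\times(\text{first estimate})+(\text{second estimate})$ for a large constant $C_s$, chooses $\alpha$ large, integrates over $(0,t)$, and absorbs $\int_0^t(\alpha+\tau)^{\lambda+s(1+\lambda)}\int w_x^2\,dx\,d\tau$ by the corresponding dissipation on the left together with the induction hypothesis and Lemma \ref{newlemma2.2}; what remains is $\le C(N(0)^2+\delta^2+\delta\epsilon)$, with an extra $\ln^2(1+t)$ factor precisely when $k(\lambda)\in\mathbb N^+$, which is \eqref{bE:2.67}.

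I expect the main obstacle to be the bookkeeping for $\partial_x^s g_2$ at $s=2$: one must verify that after the two spatial derivatives are distributed over the quadratic quantities $m^2/\rho$ and over the $\tilde\rho,\tilde m$ coefficients, every resulting term either carries a factor $\delta+\epsilon$ and can be absorbed by the leading dissipation, or — thanks to Lemmas \ref{lamm-01} and \ref{newlemma2.1} — decays fast enough in $t$ to be integrable against the stated weights, and, crucially, that the genuinely top-order contributions are eliminated only by integration by parts in the $w_t$-multiplier step and are never left to compete with the energy. Keeping the weight exponents consistent through both multiplier steps and through the induction on $s$ is the other point that needs care.
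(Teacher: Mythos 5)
Your proposal follows essentially the same route as the paper: differentiate \eqref{bE:2.5-2} in $x$, run the two weighted multipliers $(\alpha+t)^{(s+1)(1+\lambda)}\partial_x^s y_t$ and $(\alpha+t)^{\lambda+s(1+\lambda)}\partial_x^s y$ (the paper's \eqref{bE:2.44} and \eqref{bE:2.53} for $s=1$), remove the top-order pieces of $\partial_x^s g_1,\partial_x^s g_2$ by the same integrations by parts as in \eqref{bE:2.32}, \eqref{bE:2.35-1}--\eqref{bE:2.35-2}, bound the source by \eqref{bE:2.23-4} with \eqref{eq3.20}, and close by a linear combination plus the lower-order estimate of Lemma \ref{newlemma2.2}. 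The only differences (induction/commutator bookkeeping, the order of the two multiplier steps, and the optional $\beta$-splitting of the cross term, which the paper handles directly in \eqref{bE:2.54}) are cosmetic, so the argument is correct.
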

\begin{proof}
Differentiating  \eqref{bE:2.5-2} with respect to $x$, we obtain
\begin{equation}\label{bE:2.42}
(\partial_{x}y)_{tt}-\partial_{x}(P^{\prime}(\tilde \rho)y_{x})_{x}+\frac{\partial_{x}y_t}{(1+t)^{\lambda}}=\partial_{x}g_1+\partial_{x}g_2+\partial_{x}S(\tilde \rho).
\end{equation}
Multiplying \eqref{bE:2.42} by $(\alpha+t)^{2(1+\lambda)}\partial_{x}y_t$ and integrating the result with respect to $x$ over $\mathbb{R}$ give that
\begin{align}\label{bE:2.44}
&\frac{d}{dt}\Big[(\alpha+t)^{2(1+\lambda)}\int_{\mathbb{R}}\frac{1}{2}(P^{\prime}(\tilde \rho)(\partial_{x}^{2}y)^2+(\partial_{x}y_t)^2)d x\Big]+\int_{\mathbb{R}}\frac{(\alpha+t)^{2(1+\lambda)}}{(1+t)^{\lambda}}(\partial_{x}y_t)^2d x
\notag\\
\leq & 2(1+\lambda)(\alpha+t)^{1+2\lambda}\int_{\mathbb{R}}\frac{1}{2}(\partial_{x}y_t)^2d x
+(\alpha+t)^{2(1+\lambda)}\int_{\mathbb{R}}\partial_{x}g_1\partial_{x}y_td x+(\alpha+t)^{2(1+\lambda)}\int_{\mathbb{R}}\partial_{x}g_2\partial_{x}y_td x
\notag\\
&+(\alpha+t)^{2(1+\lambda)}\int_{\mathbb{R}}\partial_{x}S(\tilde \rho)\partial_{x}y_td x+I_1,
\end{align}
where
\begin{align*}
I_1=&2(1+\lambda)(\alpha+t)^{1+2\lambda}\int_{\mathbb{R}}\frac{1}{2}P^{\prime}(\tilde \rho)(\partial_{x}^2y)^2d x+(\alpha+t)^{2(1+\lambda)}\int_{\mathbb{R}}\frac{1}{2}(P^{\prime}(\tilde \rho))_t(\partial_{x}^{2}y)^2d x
\notag\\
&+(\alpha+t)^{2(1+\lambda)}\int_{\mathbb{R}}[\partial_{x}^2 y(P^{\prime}(\tilde \rho))_{x}+y_{x}\partial_{x}^2(P^{\prime}(\tilde \rho))]\partial_{x}y_td x.
\end{align*}
A direct computation yields that
\begin{align}\label{bE:2.46}
|I_1|\leq &C\Big(1+\delta\frac{\alpha+t}{1+t}\Big) (\alpha+t)^{1+2\lambda}\int_{\mathbb{R}}(\partial_{x}^{2}y)^2d x+C\delta \frac{(\alpha+t)^{2(1+\lambda)}}{(1+t)^{\lambda}}(\partial_{x}y_t)^2d x
+C\delta\frac{(\alpha+t)^{2(1+\lambda)}}{(1+t)^{2+\lambda}}\int_{\mathbb{R}}y_{x}^2d x.
\end{align}

Note that
\begin{align*}
&(\alpha+t)^{2(1+\lambda)}\int_{\mathbb{R}}\partial_{x}g_1\partial_{x}y_td x
\notag\\
=&(\alpha+t)^{2(1+\lambda)}\int_{\mathbb{R}}[(P^{\prime}(\tilde \rho)-P^{\prime}(\rho))y_{xx}+(P^{\prime}(\rho)-P^{\prime}(\tilde \rho)+P^{\prime\prime}(\tilde \rho)y_{x})\tilde \rho_{x}]\partial_{x}^{2}y_td x
\notag\\
=&\frac{d}{dt}\Big[(\alpha+t)^{2(1+\lambda)}\int_{\mathbb{R}} (P^{\prime}(\tilde \rho)-P^{\prime}(\rho))\frac{1}{2}(\partial_{x}^{2}y)^2d x\Big]
+I_{2,1}+I_{2,2},
\end{align*}
where
\begin{align*}
&I_{2,1}=-2(1+\lambda)(\alpha+t)^{1+2\lambda}\int_{\mathbb{R}} (P^{\prime}(\tilde \rho)-P^{\prime}(\rho))\frac{1}{2}(\partial_{x}^{2}y)^2d x-(\alpha+t)^{2(1+\lambda)}\int_{\mathbb{R}} (P^{\prime}(\tilde \rho)-P^{\prime}(\rho))_t\frac{1}{2}(\partial_{x}^{2}y)^2d x,
\notag\\
%&I_{2,2}=(\alpha+t)^{1+3\lambda}\sum_{l\geq 1}^{ l+r=s-1}\int_{\mathbb{R}}C_{s-1}^l[(P^{\prime}(\tilde \rho)-P^{\prime}(\rho))^{(l+1)}(y_{xx})^{(r)}+(P^{\prime}(\tilde \rho)-P^{\prime}(\rho))^{(l)}(y_{xx})^{(r+1)}]\partial_{x}y_td x,
%\notag\\
&I_{2,2}=(\alpha+t)^{2(1+\lambda)}\int_{\mathbb{R}}[(P^{\prime}(\rho)-P^{\prime}(\tilde \rho)+P^{\prime\prime}(\tilde \rho)y_{x})_{x}\tilde \rho_{x}+
(P^{\prime}(\rho)-P^{\prime}(\tilde \rho)+P^{\prime\prime}(\tilde \rho)y_{x})\tilde \rho_{xx}]\partial_{x}y_td x,
\end{align*}
then it follows from the a priori assumption \eqref{bE:2.22} and the estimates \eqref{bE:2.35} that
%\begin{align}
%\sum_{i=1}^2|I_{2,i}|\leq &C(\delta+\epsilon+\delta\frac{\alpha+t}{1+t}) (\alpha+t)^{1+3\lambda-1}\int_{\mathbb{R}}(\partial_{x}^{2}y)^2d x+C\delta \frac{(\alpha+t)^{1+3\lambda}}{(1+t)^{\lambda}}(\partial_{x}y_t)^2d x
%\notag\\
%&+C\delta(\alpha+t)^{1+3\lambda}(1+t)^{-2-\lambda}\int_{\mathbb{R}}y_{x}^2d x,
%\end{align}
%which yields that
\begin{align}\label{bee3.25}
&(\alpha+t)^{2(1+\lambda)}\int_{\mathbb{R}}\partial_{x}g_1\partial_{x}y_td x
\notag\\
\leq &\frac{d}{dt}\Big[(\alpha+t)^{2(1+\lambda)}\int_{\mathbb{R}} (P^{\prime}(\tilde \rho)-P^{\prime}(\rho))\frac{1}{2}(\partial_{x}^{2}y)^2d x\Big]+
C\Big(\delta+\epsilon+\delta\frac{\alpha+t}{1+t}\Big)  (\alpha+t)^{1+2\lambda}\int_{\mathbb{R}}(\partial_{x}^{2}y)^2d x
\notag\\
&+C\delta \frac{(\alpha+t)^{2(1+\lambda)}}{(1+t)^{\lambda}}(\partial_{x}y_t)^2d x+C\delta(1+t)^{\lambda}\int_{\mathbb{R}}y_{x}^2d x.
\end{align}
In addition, it holds that
\begin{align*}
&(\alpha+t)^{2(1+\lambda)}\int_{\mathbb{R}}\partial_{x}g_2\partial_{x}y_td x
\notag\\
=&-(\alpha+t)^{2(1+\lambda)}\int_{\mathbb{R}}\partial_{x}\Big[\frac{m^2}{\rho^2}y_{xx}+\frac{2m}{\rho}y_{{x}t}-\Big(\frac{m^2}{\rho^2}-\frac{\tilde m^2}{\tilde \rho^2}\Big)\tilde \rho_{x}+\Big(\frac{2m}{\rho}-\frac{2\tilde m}{\tilde \rho}\Big)\tilde m_{x}\Big]\partial_{x}y_td x
\notag\\
=&\frac{d}{dt}\Big[ (\alpha+t)^{2(1+\lambda)}\int_{\mathbb{R}}\frac{m^2}{\rho^2}\frac{1}{2}(\partial_{x}^{2}y)^2d x\Big]+I_{3,1}+I_{3,2}+I_{3,3},
\end{align*}
where
\begin{align*}
&I_{3,1}=-2(1+\lambda)(\alpha+t)^{1+2\lambda}\int_{\mathbb{R}}\frac{m^2}{\rho^2}\frac{1}{2}(\partial_{x}^{2}y)^2d x-(\alpha+t)^{2(1+\lambda)}\int_{\mathbb{R}}\Big(\frac{m^2}{\rho^2}\Big)_t\frac{1}{2}(\partial_{x}^{2}y)^2d x,
%\notag\\
%&I_{3,2}=(\alpha+t)^{1+3\lambda}\sum_{l\geq 1}^{ l+r=s-1}\int_{\mathbb{R}}C_{s-1}^l[(\frac{m^2}{\rho^2})^{(l+1)}(y_{xx})^{(r)}+(\frac{m^2}{\rho^2})^{(l)}(y_{xx})^{(r+1)}]\partial_{x}y_td x,
\notag\\
&I_{3,2}=-(\alpha+t)^{2(1+\lambda)}\int_{\mathbb{R}}\Big(\frac{2m}{\rho}\Big)_{x}\frac{1}{2}(\partial_{x}y_t)^2d x,
\notag\\
&I_{3,3}=(\alpha+t)^{2(1+\lambda)}\sum_{l,r\geq 0}^{ l+r=1}\int_{\mathbb{R}}C_{1}^l\Big[\Big(\frac{m^2}{\rho^2}-\frac{\tilde m^2}{\tilde \rho^2}\Big)^{(l)}(\tilde \rho_{x})^{(r)}+\Big(\frac{2m}{\rho}-\frac{2\tilde m}{\tilde \rho}\Big)^{(l)}(\tilde m_{x})^{(r)}\Big]\partial_{x}y_td x.
\end{align*}
Similarly, a tedious computation shows that
\begin{align}\label{bE:2.48}
&(\alpha+t)^{2(1+\lambda)}\int_{\mathbb{R}}\partial_{x}g_2\partial_{x}y_td x
\notag\\
\leq& \frac{d}{dt}\Big[ (\alpha+t)^{2(1+\lambda)}\int_{\mathbb{R}}\frac{m^2}{\rho^2}\frac{1}{2}(\partial_{x}^{2}y)^2d x\Big]+C(\delta+\epsilon) (\alpha+t)^{1+2\lambda}\int_{\mathbb{R}}(\partial_{x}^{2}y)^2d x
\notag\\
&+C(\delta+\epsilon) \frac{(\alpha+t)^{2(1+\lambda)}}{(1+t)^{\lambda}}\int_{\mathbb{R}}(\partial_{x}y_t)^2d x+C\delta(1+t)^{\lambda}\int_{\mathbb{R}}(y_{x}^2+y_t^2)d x.
\end{align}
Moreover, Lemma \ref{blemma2.2} yields that
\begin{align}\label{bE:2.51}
&(\alpha+t)^{2(1+\lambda)}\int_{\mathbb{R}}\partial_{x}S(\tilde \rho)\partial_{x}y_td x
\notag\\
\leq& \nu_1 \frac{(\alpha+t)^{2(1+\lambda)a}}{(1+t)^{\lambda}}\int_{\mathbb{R}}(\partial_{x}y_t)^2d x+C(\nu_1)\delta^2 (1+t)^{-1-2\sigma[(k_0+1)-k(\lambda)]},
\end{align}
where $\nu_1$ is a small constant.
Substituting \eqref{bE:2.46}-\eqref{bE:2.51} into \eqref{bE:2.44}, and choosing $\nu_1$ small and $\alpha$ large enough give that
\begin{align}\label{bE:2.52}
&\frac{d}{dt}\Big\{(\alpha+t)^{2(1+\lambda)}\int_{\mathbb{R}}\frac{1}{2}\Big[\Big(P^{\prime}(\rho)-\frac{m^2}{\rho^2}\Big)(\partial_{x}^{2}y)^2+(\partial_{x}y_t)^2\Big]d x\Big\}+\int_{\mathbb{R}}\frac{(\alpha+t)^{2(1+\lambda)}}{2(1+t)^{\lambda}}(\partial_{x}y_t)^2d x
\notag\\
\leq& C(1+\delta\frac{\alpha+t}{1+t}) (\alpha+t)^{1+2\lambda}\int_{\mathbb{R}}(\partial_{x}^{2}y)^2d x+C(\delta+\epsilon)(1+t)^{\lambda}\int_{\mathbb{R}}(y_{x}^2+y_t^2)d x
\notag\\
&+C(\nu_1)\delta^2 (1+t)^{-1-2\sigma[(k_0+1)-k(\lambda)]}.
\end{align}

It remains to estimate the term $(\alpha+t)^{1+2\lambda}\int_{\mathbb{R}}(\partial_{x}^{2}y)^2d x$. We multiply $\eqref{bE:2.42}$ by $(\alpha+t)^{1+2\lambda}\partial_{x}y$ and integrate the result over $\mathbb R$ to obtain
\begin{align}\label{bE:2.53}
&\frac{d}{dt}\Big[\int_{\mathbb{R}}(\alpha+t)^{1+2\lambda}\partial_{x}y\partial_{x}y_t+\frac{(\alpha+t)^{1+2\lambda}}{2(1+t)^{\lambda}}(\partial_{x}y)^2)d x\Big]+(\alpha+t)^{1+2\lambda}\int_{\mathbb{R}}P^{\prime}(\tilde \rho)(\partial_{x}^{2}y)^2d x
\notag\\
\leq &
(\alpha+t)^{1+2\lambda}\int_{\mathbb{R}}(\partial_{x}g_1+\partial_{x}g_2+\partial_{x}S(\tilde \rho))\partial_{x}yd x+\sum_{i=1}^3I_{4,i},
\end{align}
where
\begin{align*}
&I_{4,1}=(1+2\lambda)(\alpha+t)^{2\lambda}\int_{\mathbb{R}}\partial_{x}y\partial_{x}y_td x+(\alpha+t)^{1+2\lambda}\int_{\mathbb{R}}(\partial_{x}y_t)^2d x,
\notag\\
&I_{4,2}=(\alpha+t)^{1+2\lambda}\int_{\mathbb{R}} (P^{\prime}(\tilde \rho))_{x}y_{x}\partial_{x}^{2}yd x,
\notag\\
&I_{4,3}=\frac{1}{2}\Big[(1+2\lambda)\frac{(\alpha+t)^{2\lambda}}{(1+t)^{\lambda}}-\lambda\frac{(\alpha+t)^{1+2\lambda}}{(1+t)^{\lambda+1}}\Big]\int_{\mathbb{R}}y_{x}^2d x.
\end{align*}
It is easy to check that
\begin{align}\label{bE:2.54}
\sum_{i=1}^3|I_{4,i}|\leq &\nu_1  (\alpha+t)^{1+2\lambda}\int_{\mathbb{R}}(\partial_{x}^{2}y)^2d x+C (\alpha+t)^{1+2\lambda}\int_{\mathbb{R}}(\partial_{x}y_t)^2d x
+C(1+t)^{\lambda}\int_{\mathbb{R}}y_{x}^2d x,
\end{align}
where $\nu_1$ is a small constant.
In addition, a direct computation shows that
\begin{align}\label{bE:2.56}
%&||\notag\\
%%&=|(\alpha+t)^{1+2\lambda}\int_{\mathbb{R}}(P^{\prime}(\tilde \rho)-P^{\prime}(\rho))(\partial_{x}^{2}y)^2d x
%%%&+(\alpha+t)^{1+3\lambda-\lambda}\sum_{l\geq 1}^{l+r=s-1}\int_{\mathbb{R}}C_{s-1}^l(P^{\prime}(\tilde \rho)-P^{\prime}(\rho))^{(l)}(y_{xx})^{(r)}\partial_{x}^{2}yd x
%%%\notag\\
%%+(\alpha+t)^{1+2\lambda}\int_{\mathbb{R}}(P^{\prime}(\rho)-P^{\prime}(\tilde \rho)+P^{\prime\prime}(\tilde \rho)y_{x})\tilde \rho_{x}\partial_{x}^{2}yd x|
%%\notag\\
%\leq &C(\epsilon+\delta)(\alpha+t)^{1+2\lambda}\int_{\mathbb{R}}(\partial_{x}^{2}y)^2d x
%+C\delta(\alpha+t)^{1+2\lambda}(1+t)^{-(1+\lambda)}\int_{\mathbb{R}}y_{x}^2d x,
%\end{align}
&(\alpha+t)^{1+2\lambda}\int_{\mathbb{R}}\partial_{x}g_1\partial_{x}yd x+(\alpha+t)^{1+2\lambda}\int_{\mathbb{R}}\partial_{x}g_2\partial_{x}yd x
%\notag\\
%=&(\alpha+t)^{1+3\lambda-\lambda}\int_{\mathbb{R}}(\frac{m^2}{\rho^2}y_{xx}+\frac{2m}{\rho}y_{{x}t}-(\frac{m^2}{\rho^2}-\frac{\tilde m^2}{\tilde \rho^2})\tilde \rho_{x}+(\frac{2m}{\rho}-\frac{2\tilde m}{\tilde \rho})\tilde m_{x})\partial_{x}^{2}yd x
\notag\\
%=&(\alpha+t)^{1+2\lambda}\int_{\mathbb{R}}\frac{m^2}{\rho^2}(\partial_{x}^{2}y)^2d x+(\alpha+t)^{1+2\lambda}\int_{\mathbb{R}}\frac{2m}{\rho}\partial_{x}y_t\partial_{x}^{2}yd x
%\notag\\
%%&I_{6,2}=(\alpha+t)^{1+3\lambda-\lambda}\sum_{l\geq 1}^{l+r=s-1}\int_{\mathbb{R}}C_{s-1}^l\Big[(\frac{m^2}{\rho^2})^{(l)}+(\frac{2m}{\rho})^{(l)}\Big](y_{xx})^{(r)}\partial_{x}^{2}yd x,
%%\notag\\
%&+(\alpha+t)^{1+2\lambda}\int_{\mathbb{R}}\Big[(\frac{m^2}{\rho^2}-\frac{\tilde m^2}{\tilde \rho^2})\tilde \rho_{x}-(\frac{2m}{\rho}-\frac{2\tilde m}{\tilde \rho})\tilde m_{x}\Big]\partial_{x}^{2}yd x
%\notag\\
\leq &C(\delta+\epsilon)(\alpha+t)^{1+2\lambda}\int_{\mathbb{R}}(\partial_{x}^{2}y)^2d x+C (\delta+\epsilon)(\alpha+t)^{1+2\lambda}\int_{\mathbb{R}}(\partial_{x}y_t)^2d x
\notag\\
&+C\delta(1+t)^{\lambda}\int_{\mathbb{R}}(y_{x}^2+y_t^2)d x
\end{align}
and
\begin{align}\label{bE:2.59}
&(\alpha+t)^{1+2\lambda}\int_{\mathbb{R}}\partial_{x}S(\tilde \rho)\partial_{x}yd x
\leq \nu_1 (\alpha+t)^{1+2\lambda}\int_{\mathbb{R}}(\partial_{x}^{2}y)^2d x+C(\nu_1)\delta^2(1+t)^{-1-2\sigma[(k_0+1)-k(\lambda)]}.
\end{align}
Substituting \eqref{bE:2.54}-\eqref{bE:2.59} into \eqref{bE:2.53} and choosing $\nu_1$ small enough give that
\begin{align}\label{bE:2.60}
&\frac{d}{dt}\Big[\int_{\mathbb{R}}(\alpha+t)^{1+2\lambda}\partial_{x}y\partial_{x}y_t+\frac{(\alpha+t)^{1+2\lambda}}{2(1+t)^{\lambda}}(\partial_{x}y)^2)d x\Big]+(\alpha+t)^{1+2\lambda}\int_{\mathbb{R}}\frac{P^{\prime}(\tilde \rho)}{2}(\partial_{x}^{2}y)^2d x
\notag\\
\leq &C (\alpha+t)^{1+2\lambda}\int_{\mathbb{R}}(\partial_{x}y_t)^2d x+C(\alpha+t)^{1+2\lambda}(1+t)^{-(1+\lambda)}\int_{\mathbb{R}}(y_{x}^2+y_t^2)d x
\notag\\
&+C(\nu_1)\delta^2 (1+t)^{-1-2\sigma[(k_0+1)-k(\lambda)]}.
\end{align}
Integrating
$\eqref{bE:2.52}+C_3\times \eqref{bE:2.60}$ in $(0,t)$ for large constants $C_3$ and $\alpha$ and using Lemma \ref{newlemma2.2}, we obtain
%\begin{align}\label{bE:2.61}
%&(\alpha+t)^{2(1+\lambda)}\int_{\mathbb{R}}[(\partial_{x}^{2}y)^2+(\partial_{x}y_t)^2]d x+\int_0^t\int_{\mathbb{R}}(\alpha+\tau)^{1+2\lambda}(\partial_{x}^{2}y)^2d xd\tau+\int_0^t\int_{\mathbb{R}}\frac{(\alpha+\tau)^{2(1+\lambda)}}{(1+t)^{\lambda}}(\partial_{x}y_t)^2d xd\tau
%\notag\\
%&\leq
%C(N(0)^2+\delta^2+\delta\epsilon)+C\int_0^t\int_{\mathbb{R}}(1+\tau)^{\lambda}(y_{x}^2+y_t^2)d xd\tau,
%\end{align}
\begin{align}
&(\alpha+t)^{2(1+\lambda)}\int_{\mathbb{R}}[(\partial_{x}^{2}y)^2+(\partial_{x}y_t)^2]d x+\int_0^t\int_{\mathbb{R}}(\alpha+\tau)^{1+2\lambda}(\partial_{x}^{2}y)^2d xd\tau
\notag\\
&\quad+\int_0^t\int_{\mathbb{R}}\frac{(\alpha+\tau)^{2(1+\lambda)}}{(1+t)^{\lambda}}(\partial_{x}y_t)^2d xd\tau\leq
\begin{cases}
C(N(0)^2+\delta^2+\delta\epsilon), \quad\mbox{if} \quad k(\lambda)\notin \mathbb N^+,\\
C(N(0)^2+\delta^2+\delta\epsilon)\ln^2 (1+t),\quad\mbox{if} \quad k(\lambda)\in \mathbb N^+,\notag
\end{cases}
\end{align}
where we have used the fact \eqref{eq3.20}.
%Finally, multiplying \eqref{bE:2.52} by $(\alpha+t)^{1-\lambda}$ leads to
%\begin{align}\label{bE:2.64}
%&\frac{d}{dt}\Big[(\alpha+t)^{2(1+\lambda)}\int_{\mathbb{R}}\frac{1}{2}((P^{\prime}(\rho)-\frac{m^2}{\rho^2})(\partial_{x}^{2}y)^2+(\partial_{x}y_t)^2)d x\Big]+\int_{\mathbb{R}}\frac{(\alpha+t)^{2(1+\lambda)}}{2(1+t)^{\lambda}}(\partial_{x}y_t)^2d x
%\notag\\
%&\leq C(\alpha+t)^{1+2\lambda}\int_{\mathbb{R}}\frac{1}{2}((P^{\prime}(\rho)-\frac{m^2}{\rho^2})(\partial_{x}^{2}y)^2+(\partial_{x}y_t)^2)d x+C(1+t)^{\lambda}\int_{\mathbb{R}}(y_{x}^2+y_t^2)d x
%\notag\\
%&\quad+C(\nu_1)\delta^2 (1+t)^{1+2\lambda-2(k_0+1)\sigma-\frac{1}{2}(1+\lambda)}.
%\end{align}
%Integrating it with respect to $t$ over $[0,t]$, together with \eqref{eq3.37}-\eqref{bE:2.63}, yields
%\begin{align}\label{bewE:2.67}
%&(\alpha+t)^{2(1+\lambda)}\int_{\mathbb{R}}[(\partial_{x}^{2}y)^2+(\partial_{x}y_t)^2]d x+\int_0^t\int_{\mathbb{R}}(\alpha+\tau)^{1+3\lambda+1-2\lambda}(\partial_{x}y_t)^2d xd\tau
%\leq
%C(N(0)^2+\delta^2+\delta\epsilon).
%\end{align}

Similarly, we can obtain the desired estimates \eqref{bE:2.67} for the case of $s=2$.
Thus, the proof is completed.
\end{proof}

\begin{lemma}\label{blemma2.4}
Assume that $y(x,t) \in X_T$ is the solution of \eqref{bE:2.5-2}. If $\epsilon$  and $\delta$ are small, it holds that for $s=0,1$,
\begin{align}\label{bbE:2.86}
&(1+t)^{3+\lambda+s(1+\lambda)}\int_{\mathbb{R}}[(\partial_{x}^sy_{tt})^2+(\partial_{x}^sy_{{x}t})^2]d x+\int_0^t\int_{\mathbb{R}}(1+\tau)^{3+s(1+\lambda)}(\partial_{x}^sy_{tt})^2d xd\tau
\notag\\&
+\int_0^t\int_{\mathbb{R}}(1+\tau)^{2+\lambda+s(1+\lambda)}(\partial_{x}^sy_{{x}t})^2d xd\tau\leq \begin{cases}
C(N(0)^2+\delta^2+\delta\epsilon), \quad\mbox{if} \quad k(\lambda)\notin \mathbb N^+,\\
C(N(0)^2+\delta^2+\delta\epsilon)\ln^2 (1+t),\quad\mbox{if} \quad k(\lambda)\in \mathbb N^+.
\end{cases}
\end{align}
\end{lemma}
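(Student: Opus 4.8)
The strategy is to differentiate \eqref{bE:2.5-2} once in $t$, set $w:=y_t$, and then run a single weighted energy estimate of the type used in Lemma \ref{blemma2.3}, feeding in the pointwise bounds and the space--time dissipation integrals already established in Lemmas \ref{newlemma2.2} and \ref{blemma2.3}. Using $\partial_t\big((P^{\prime}(\tilde\rho)y_x)_x\big)=(P^{\prime}(\tilde\rho)w_x)_x+\big((P^{\prime}(\tilde\rho))_ty_x\big)_x$, equation \eqref{bE:2.5-2} yields
\begin{equation}\label{wequation}
w_{tt}-(P^{\prime}(\tilde\rho)w_x)_x+\frac{w_t}{(1+t)^{\lambda}}=\frac{\lambda w}{(1+t)^{1+\lambda}}+\big((P^{\prime}(\tilde\rho))_ty_x\big)_x+\partial_tg_1+\partial_tg_2+\partial_tS(\tilde\rho),
\end{equation}
which is again a quasilinear wave equation of the form \eqref{bE:2.5-2}, with $y$ replaced by $w$. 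Two features of \eqref{wequation} make the faster rates in \eqref{bbE:2.86} available: the linear term $\frac{y_t}{(1+t)^{\lambda}}$ now appears as the damping $\frac{w_t}{(1+t)^{\lambda}}$ rather than as a source; and every term on the right of \eqref{wequation} is either one order more decaying (a $\partial_t$ falling on $\tilde\rho$ costs a factor $(1+t)^{-1}$, while $\|\partial_t^j\partial_x^lS(\tilde\rho)\|$ is bounded by \eqref{bE:2.23-4} with $j\ge1$) or quadratic in the small quantities $y_x,y_t$ and their derivatives. Since $\partial_x^sw_t=\partial_x^sy_{tt}$ and $\partial_x^{s+1}w=\partial_x^sy_{xt}$, the level-$s$ energy of \eqref{wequation} is exactly the quantity on the left of \eqref{bbE:2.86}, and its initial value is bounded by $C(N(0)+\delta)$ by evaluating \eqref{bE:2.5-2} at $t=0$.

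For $s=0,1$ I would differentiate \eqref{wequation} $s$ times in $x$, multiply by $(\alpha+t)^{3+\lambda+s(1+\lambda)}\partial_x^sw_t$ and integrate over $\mathbb R$. This produces the time derivative of $\frac12(\alpha+t)^{3+\lambda+s(1+\lambda)}\int_{\mathbb R}\big(P^{\prime}(\tilde\rho)(\partial_x^{s+1}w)^2+(\partial_x^sw_t)^2\big)dx$ together with the dissipation $(\alpha+t)^{3+s(1+\lambda)}\int_{\mathbb R}(\partial_x^sw_t)^2dx$, against a right-hand side whose terms are handled as follows. The weight-differentiation term $(\alpha+t)^{2+\lambda+s(1+\lambda)}\int\big(P^{\prime}(\tilde\rho)(\partial_x^{s+1}w)^2+(\partial_x^sw_t)^2\big)dx$: its $(\partial_x^sw_t)^2$ part is absorbed by the dissipation (since $2+\lambda<3$), and its $(\partial_x^{s+1}w)^2=(\partial_x^sy_{xt})^2$ part is precisely the integrand of the space--time bound of Lemma \ref{blemma2.3} at level $s+1$. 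The term $\frac{\lambda w}{(1+t)^{1+\lambda}}=\frac{\lambda y_t}{(1+t)^{1+\lambda}}$: after Young's inequality it leaves a small multiple of the dissipation plus a multiple of $(\alpha+t)^{1+s(1+\lambda)}\int(\partial_x^sy_t)^2dx$, integrable in time by Lemmas \ref{newlemma2.2}--\ref{blemma2.3}. The term $\big((P^{\prime}(\tilde\rho))_ty_x\big)_x=(P^{\prime}(\tilde\rho))_{tx}y_x+(P^{\prime}(\tilde\rho))_ty_{xx}$: since $(P^{\prime}(\tilde\rho))_t$ and $(P^{\prime}(\tilde\rho))_{tx}$ have size $O(\delta)(1+t)^{-1}$ and $O(\delta)(1+t)^{-(3+\lambda)/2}$ with fast decay in $\xi$ (Lemmas \ref{lamm-01}, \ref{newlemma2.1}), Young's inequality reduces it to a small multiple of the dissipation plus multiples of $\int_0^t(1+\tau)^{\lambda}\|y_x\|^2d\tau$ and $\int_0^t(1+\tau)^{1+2\lambda}\|y_{xx}\|^2d\tau$, controlled by Lemmas \ref{newlemma2.2}--\ref{blemma2.3}. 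For $s=1$, the commutator terms from the $x$-differentiation of $(P^{\prime}(\tilde\rho)w_x)_x$ are treated the same way. Finally $\partial_tS(\tilde\rho)$ is absorbed using \eqref{bE:2.23-4}, which is time-integrable, up to a factor $\ln(1+t)$ in the case $k(\lambda)\in\mathbb N^+$, precisely because of \eqref{eq3.20}. Integrating the resulting differential inequality in $(0,t)$ and inserting Lemmas \ref{newlemma2.2}--\ref{blemma2.3} yields \eqref{bbE:2.86}; the bound $\int_0^t\int(1+\tau)^{2+\lambda+s(1+\lambda)}(\partial_x^sy_{xt})^2dxd\tau$ appearing in \eqref{bbE:2.86} is just Lemma \ref{blemma2.3} at level $s+1$.

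The main obstacle is the control of the top-order parts of $\partial_tg_1$ and $\partial_tg_2$. Differentiating the expression \eqref{bE:2.34} for $g_2$ once more in $t$ (and, for $s=1$, once more in $x$) produces terms carrying $y_{ttt}$, $y_{xtt}$ and $y_{xxt}$ with coefficients $\frac{m^2}{\rho^2}$, $\frac{2m}{\rho}$ and their space--time derivatives, together with terms in $\tilde\rho_{xt}$ and $\tilde m_{xt}$; these must be rearranged exactly as in \eqref{bE:2.35-1}--\eqref{3.16w} of Lemma \ref{lemma2.2n} and \eqref{bee3.25}--\eqref{bE:2.48} of Lemma \ref{blemma2.3}, namely: integrate by parts to shift one derivative off the highest-order unknown, extract a perfect time derivative to be absorbed into the energy with a small coefficient, and use the pointwise bounds \eqref{bE:2.35}, the a priori assumption \eqref{bE:2.22} and the smallness of $\delta+\epsilon$, so that what remains is a small multiple of the dissipation plus quantities already controlled by Lemmas \ref{newlemma2.2}--\ref{blemma2.3}. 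Apart from this, the only delicate point is the bookkeeping of the weights, so that every residual error term is matched either against the present dissipation or against one of the accumulated space--time integrals, and is therefore dominated, after integration in time, by $C(N(0)^2+\delta^2+\delta\epsilon)$ (times $\ln^2(1+t)$ in the resonant case $k(\lambda)\in\mathbb N^+$). Once the forcing from \eqref{wequation} is organised, the computation is structurally identical to that of Lemma \ref{blemma2.3}.
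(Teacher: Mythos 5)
Your plan is correct and coincides with the paper's own argument: the paper likewise differentiates \eqref{bE:2.5-2} in $t$ (its \eqref{bE:2.68}, which is your \eqref{wequation} with the $\lambda y_t(1+t)^{-1-\lambda}$ term kept on the left), multiplies by $(\alpha+t)^{3+\lambda+s(1+\lambda)}\partial_x^s y_{tt}$, extracts perfect time derivatives from $g_{1t}$ and $g_{2t}$ exactly as in the earlier lemmas, absorbs $S(\tilde\rho)_t$ via \eqref{bE:2.23-4} and \eqref{eq3.20}, and closes by integrating in time against Lemmas \ref{newlemma2.2}--\ref{blemma2.3} (with $s=1$ treated "similarly"). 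Your observation that the $y_{xt}$ space--time integral in \eqref{bbE:2.86} is already supplied by Lemma \ref{blemma2.3} at level $s+1$ is consistent with how the paper assembles the final bound.
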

\begin{proof}
Differentiating \eqref{bE:2.5-2} with respect to $t$ gives
\begin{align}\label{bE:2.68}
y_{ttt}-(P^{\prime}(\tilde \rho)y_{x})_{xt}+\frac{y_{tt}}{(1+t)^{\lambda}}-\frac{\lambda y_t}{(1+t)^{1+\lambda}} =g_{1t}+g_{2t}+S(\tilde \rho)_{t}.
\end{align}
 Then we multiply \eqref{bE:2.68} by $(\alpha+t)^{3+\lambda}y_{tt}$ and integrate the result over $\mathbb{R}$ to obtain that
\begin{align}\label{bE:2.69}
&\frac{d}{dt}\int_{\mathbb{R}}\frac{(\alpha+t)^{3+\lambda}}{2}[y_{tt}^2+P^{\prime}(\tilde \rho)y_{tx}^2]d x+\frac{(\alpha+t)^{3+\lambda}}{(1+t)^{\lambda}}\int_{\mathbb{R}}y_{tt}^2d x
\notag\\
\leq &\frac{3+\lambda}{2}(\alpha+t)^{2+\lambda}\int_{\mathbb{R}} y_{tt}^2d x+(\alpha+t)^{3+\lambda}\int_{\mathbb{R}}g_{1t}y_{tt}d x+(\alpha+t)^{3+\lambda}\int_{\mathbb{R}}g_{2t}y_{tt}d x
\notag\\
&+(\alpha+t)^{3+\lambda}\int_{\mathbb{R}}S(\tilde \rho)_{t}y_{tt}d x+I_5,
\end{align}
where
\begin{align*}
I_5=&\frac{3+\lambda}{2}(\alpha+t)^{2+\lambda}\int_{\mathbb{R}}P^{\prime}(\tilde \rho)y_{tx}^2d x+(\alpha+t)^{3+\lambda}\Big[\int_{\mathbb{R}}P^{\prime\prime}(\tilde \rho)\tilde \rho_t \frac{1}{2}y_{tx}^2d x+\int_{\mathbb{R}}P^{\prime\prime}(\tilde \rho)\tilde \rho_t \tilde \rho_{x}y_{x}y_{tt}d x
\notag\\
&+\int_{\mathbb{R}}P^{\prime\prime}(\tilde \rho)\tilde \rho_{tx}y_{x}y_{tt}d x+\int_{\mathbb{R}}P^{\prime\prime}(\tilde \rho)\tilde \rho_t y_{xx}y_{tt}d x\Big]+\lambda\frac{(\alpha+t)^{3+\lambda}}{(1+t)^{1+\lambda}}\int_{\mathbb{R}}y_ty_{tt}d x.
\end{align*}
A direct computation shows that
\begin{align}\label{bE:2.70}
|I_5|\leq& \nu_1\frac{(\alpha+t)^{3+\lambda}}{(1+t)^{\lambda}}\int_{\mathbb{R}}y_{tt}^2d x+C(\alpha+t)^{3+\lambda}\Big[\frac{1}{\alpha+t}+\frac{\delta}{1+t}\Big] \int_{\mathbb{R}}y_{tx}^2d x+C\delta^2 \frac{(\alpha+t)^{3+\lambda}}{(1+t)^{3}}\int_{\mathbb{R}}y_{x}^2d x
\notag\\
&+C\delta^2 \frac{(\alpha+t)^{3+\lambda}}{(1+t)^{2-\lambda}}\int_{\mathbb{R}}y_{xx}^2d x+\frac{(\alpha+t)^{3+\lambda}}{(1+t)^{\lambda+2}}\int_{\mathbb{R}}y_{t}^2d x,
\end{align}
\begin{align}
(\alpha+t)^{3+\lambda}\int_{\mathbb{R}}g_{1t}y_{tt}d x
\leq& \frac{d}{dt}\Big[(\alpha+t)^{3+\lambda}\int_{\mathbb{R}}(P^{\prime}(\tilde \rho)-P^{\prime}(\rho))\frac{1}{2}y_{{x}t}^2d x\Big]+\nu_1\frac{(\alpha+t)^{3+\lambda}}{(1+t)^{\lambda}}\int_{\mathbb{R}}y_{tt}^2d x
\notag\\
&+C(\nu_1)\delta^2\frac{(\alpha+t)^{3+\lambda}}{(1+t)^{3}}\int_{\mathbb{R}}y_{x}^2d x+C(\nu_1)\delta^2\frac{(\alpha+t)^{3+\lambda}}{(1+t)^{2-\lambda}}\int_{\mathbb{R}}y_{xx}^2d x
\notag\\
&+C\epsilon(\alpha+t)^{2+\lambda}\int_{\mathbb{R}}y_{{x}t}^2d x+C(\delta+\epsilon)\frac{(\alpha+t)^{3+\lambda}}{1+t}\int_{\mathbb{R}}y_{{x}t}^2d x
\end{align}
%Similarly, it follows from \eqref{bE:2.34} that
%\begin{align*}
%&(\alpha+t)^{3+\lambda}\int_{\mathbb{R}}g_{2t}y_{tt}d x
%=\frac{d}{dt}\Big[(\alpha+t)^{3+\lambda}\int_{\mathbb{R}}\frac{m^2}{\rho^2}\frac{1}{2}(y_{{x}t})^2d x\Big]+\sum_{i=1}^4I_{7,i},
%\end{align*}
%where
%\begin{align*}
%I_{7,1}=&-(\alpha+t)^{3+\lambda}\int_{\mathbb{R}}(\frac{2mm_t}{\rho^2}-\frac{m^2\rho_t}{\rho^3})y_{xx}y_{tt}d x+(\alpha+t)^{3+\lambda}\int_{\mathbb{R}}(\frac{m^2}{\rho^2})_{x}y_{{x}t}y_{tt}d x
%\notag\\
%&-(3+\lambda)(\alpha+t)^{2+\lambda}\int_{\mathbb{R}}\frac{m^2}{\rho^2}\frac{1}{2}y_{{x}t}^2d x-(\alpha+t)^{3+\lambda}\int_{\mathbb{R}}(\frac{m^2}{\rho^2})_t\frac{1}{2}y_{{x}t}^2d x,
%\notag\\
%I_{7,2}=&-(\alpha+t)^{3+\lambda}\int_{\mathbb{R}}(\frac{2m_t}{\rho}-\frac{2m\rho_t}{\rho^2})y_{{x}t}y_{tt}d x+(\alpha+t)^{3+\lambda}\int_{\mathbb{R}}(\frac{2m}{\rho})_{x}\frac{1}{2}y_{tt}^2d x,
%\notag\\
%I_{7,3}=&(\alpha+t)^{3+\lambda}\int_{\mathbb{R}}[(\frac{m^2}{\rho^2}-\frac{\tilde m^2}{\tilde \rho^2})_t\tilde \rho_{x}+(\frac{m^2}{\rho^2}-\frac{\tilde m^2}{\tilde \rho^2})\tilde \rho_{xt}]y_{tt}d x,
%\notag\\
%I_{7,4}=&-(\alpha+t)^{3+\lambda}\int_{\mathbb{R}}[(\frac{2m}{\rho}-\frac{2\tilde m}{\tilde \rho})_t\tilde m_{x}+(\frac{2m}{\rho}-\frac{2\tilde m}{\tilde \rho})\tilde m_{xt}]y_{tt}d x.
%\end{align*}
%Thus, from the tedious computations and the estimates \eqref{bE:2.35} we have
and
\begin{align}
&(\alpha+t)^{3+\lambda}\int_{\mathbb{R}}g_{2t}y_{tt}d x
\notag\\
\leq& \frac{d}{dt}\Big[(\alpha+t)^{3+\lambda}\int_{\mathbb{R}}\frac{m^2}{\rho^2}\frac{1}{2}(y_{{x}t})^2d x\Big]
+[\nu_1+C(\delta+\epsilon)]\frac{(\alpha+t)^{3+\lambda}}{1+t}\int_{\mathbb{R}}y_{tt}^2d x
\notag\\
&+C(\nu_1)(\delta+\epsilon)^2\frac{(\alpha+t)^{3+\lambda}}{(1+t)^{2-\lambda}}\int_{\mathbb{R}}y_{xx}^2d x+C(\delta+\epsilon)^2\frac{(\alpha+t)^{3+\lambda}}{1+t}\int_{\mathbb{R}}y_{{x}t}^2d x
\notag\\
&+C(\nu_1)\delta^2(1+t)^{\lambda}\int_{\mathbb{R}}y_{x}^2d x
+C(\nu_1)\delta^2(1+t)\int_{\mathbb{R}}y_t^2d x.
\end{align}
In addition, it follows from Lemma \ref{blemma2.2} that
\begin{align}\label{bE:2.74}
(\alpha+t)^{3+\lambda}\int_{\mathbb{R}}S(\tilde \rho)_{t}y_{tt}d x\leq \nu_1\frac{(\alpha+t)^{3+\lambda}}{(1+t)^{\lambda}}\int_{\mathbb{R}}y_{tt}^2d x+C(\nu_1)\delta^2 (1+t)^{-1-2\sigma[(k_0+1)-k(\lambda)]},
\end{align}
where $\nu_1$ is a small constant. Next,
substituting \eqref{bE:2.70}-\eqref{bE:2.74} into \eqref{bE:2.69} and choosing $\alpha$ large and $\nu_1$ small enough yield that
\begin{align}\label{bE:2.75}
&\frac{d}{dt}\Big\{\int_{\mathbb{R}}\frac{(\alpha+t)^{3+\lambda}}{2}\Big[y_{tt}^2+\Big(P^{\prime}(\rho)-\frac{m^2}{\rho^2}\Big)y_{{x}t}^2\Big]d x\Big\}+\frac{(\alpha+t)^{3+\lambda}}{(1+t)^{\lambda}}\int_{\mathbb{R}}y_{tt}^2d x
\notag\\
\leq & C(\alpha+t)^{3+\lambda}\Big[\frac{1}{\alpha+t}+\frac{\delta+\epsilon}{1+t}\Big] \int_{\mathbb{R}}y_{{x}t}^2d x+C(\delta+\epsilon)^2 \frac{(\alpha+t)^{3+\lambda}}{(1+t)^{2-\lambda}}\int_{\mathbb{R}}y_{xx}^2d x+C\delta^2 (1+t)^{\lambda}\int_{\mathbb{R}}y_{x}^2d x
\notag\\
&+C(1+t)\int_{\mathbb{R}}y_{t}^2d x+C(\nu_1)\delta^2 (1+t)^{-1-2\sigma[(k_0+1)-k(\lambda)]}.
\end{align}
From  Lemmas \ref{newlemma2.2}-\ref{blemma2.3} and the fact \eqref{eq3.20}, we integrate \eqref{bE:2.75} over $[0,t]$ to obtain that
\begin{align*}
(\alpha+t)^{3+\lambda}\int_{\mathbb{R}}(y_{tt}^2+y_{{x}t}^2)d x+\int_0^t\int_{\mathbb{R}}(\alpha+\tau)^{3}y_{tt}^2d xd\tau
\leq \begin{cases}
C(N(0)^2+\delta^2+\delta\epsilon), \quad\mbox{if} \quad k(\lambda)\notin \mathbb N^+,\\
C(N(0)^2+\delta^2+\delta\epsilon)\ln^2 (1+t),\quad\mbox{if} \quad k(\lambda)\in \mathbb N^+.
\end{cases}
\end{align*}

Similarly,
%taking the procedure as
%\[\int_0^t\int_{\mathbb{R}}\partial_{x}^s\eqref{bE:2.68}\cdot (\alpha+\tau)^{3+\lambda+s(1+\lambda)}\partial_{x}^sy_{tt}d xd\tau,\]
we can verify that \eqref{bbE:2.86} holds for the case of $s=1$.
Thus the proof is completed.
\end{proof}
\begin{lemma}\label{blemma2.5}
Assume that $y(x,t) \in X_T$ is the solution of \eqref{bE:2.5-2}. If $\epsilon$  and $\delta$ are small, it holds that
\begin{align}\label{bbE:2.86new}
&(1+t)^2\int_{\mathbb{R}}y_t^2d x \leq \begin{cases}
C(N(0)^2+\delta^2+\delta\epsilon), \quad\mbox{if} \quad k(\lambda)\notin \mathbb N^+,\\
C(N(0)^2+\delta^2+\delta\epsilon)\ln^2 (1+t),\quad\mbox{if} \quad k(\lambda)\in \mathbb N^+.
\end{cases}
\end{align}
\end{lemma}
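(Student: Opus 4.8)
The plan is to bypass another differential energy inequality and instead read $y_t$ off the equation itself. Rearranging $\eqref{bE:2.5-2}_1$ gives
\begin{equation*}
y_t=(1+t)^{\lambda}\Big[-y_{tt}+(P^{\prime}(\tilde \rho)y_{x})_{x}+g_1+g_2+S(\tilde \rho)\Big],
\end{equation*}
so that, taking the $L^2_x$-norm and using $(a+b+c+d+e)^2\le 5(a^2+b^2+c^2+d^2+e^2)$,
\begin{equation*}
\int_{\mathbb{R}}y_t^2\,dx\le C(1+t)^{2\lambda}\Big(\|y_{tt}\|^2+\|(P^{\prime}(\tilde \rho)y_{x})_{x}\|^2+\|g_1\|^2+\|g_2\|^2+\|S(\tilde \rho)\|^2\Big).
\end{equation*}
It therefore suffices to show that each of the five terms in the parenthesis is bounded by $C(N(0)^2+\delta^2+\delta\epsilon)(1+t)^{-2(1+\lambda)}$, with the extra factor $\ln^2(1+t)$ when $k(\lambda)\in\mathbb N^{+}$; multiplying by $(1+t)^{2\lambda}$ then produces exactly \eqref{bbE:2.86new}.

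For the first term, Lemma \ref{blemma2.4} with $s=0$ gives $\|y_{tt}\|^2\le C(\cdots)(1+t)^{-3-\lambda}$, and $-3-\lambda\le-2(1+\lambda)$ since $\lambda<1$. For the second term I expand $(P^{\prime}(\tilde \rho)y_{x})_{x}=P^{\prime\prime}(\tilde \rho)\tilde \rho_{x}y_{x}+P^{\prime}(\tilde \rho)y_{xx}$: the first summand is controlled by $\|\tilde \rho_{x}\|_{L^{\infty}}^2\|y_{x}\|^2\le C\delta^2(1+t)^{-(1+\lambda)}\|y_{x}\|^2$ via Lemmas \ref{lamm-01} and \ref{newlemma2.2}, and the second by Lemma \ref{blemma2.3} with $s=1$, both of order $(1+t)^{-2(1+\lambda)}$. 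For $\|S(\tilde \rho)\|^2$, estimate \eqref{bE:2.23-4} with $j=l=0$ reads $\|S(\tilde \rho)\|^2\le C\delta^2(1+t)^{-2(k_0+1)\sigma-\frac{1+\lambda}{2}}$; from $k_0=\sup_{0<\varepsilon<1/2}[k(\lambda)-\varepsilon]$ and \eqref{eq3.20} one has $k_0+1\ge k(\lambda)$, hence $2(k_0+1)\sigma\ge 2k(\lambda)\sigma=\tfrac{3(1+\lambda)}{2}$, so the exponent is $\le-2(1+\lambda)$. Finally $g_1$ and $g_2$ are of strictly higher order: using $\rho=\tilde \rho-y_x$ one writes $g_1=-\big(P(\tilde \rho-y_x)-P(\tilde \rho)+P^{\prime}(\tilde \rho)y_x\big)_x$, which is quadratic in $(y_x,y_{xx})$, while $g_2$ is expanded as in \eqref{bE:2.34}; the a priori assumption \eqref{bE:2.22} and the pointwise bounds \eqref{bE:2.35} then give $\|g_1\|^2+\|g_2\|^2\le C(\delta+\epsilon)^2(\cdots)(1+t)^{-p}$ for some $p>2(1+\lambda)$.

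The argument is essentially a bookkeeping of decay exponents, and the only point that is not mechanical is that the decay of the source $S(\tilde \rho)$ must beat $(1+t)^{-2(1+\lambda)}$; this holds precisely because of the choice of $k_0$, which in turn needs $k(\lambda)\ge 1$, i.e.\ $\lambda\ge\tfrac17$. Collecting the four bounds above yields \eqref{bbE:2.86new} and completes the proof of Lemma \ref{blemma2.5}.
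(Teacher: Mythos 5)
Your argument is correct, but it takes a genuinely different route from the paper. The paper proves Lemma \ref{blemma2.5} by one more weighted energy estimate: it multiplies the time-differentiated equation \eqref{bE:2.68} by $(1+t)^{2+\lambda}y_t$, extracts the derivative term $\frac{d}{dt}\int_{\mathbb{R}}\big[(1+t)^{2+\lambda}y_ty_{tt}+\tfrac12(1+t)^{2}y_t^2\big]dx$, and integrates in time, absorbing the right-hand side by the space--time integrals of $y_x,y_t,y_{xt},y_{tt}$ already supplied by Lemmas \ref{newlemma2.2}--\ref{blemma2.4} together with the source bound \eqref{bE:2.23-4}. You instead solve \eqref{bE:2.5-2} algebraically for $y_t=(1+t)^{\lambda}\big[-y_{tt}+(P^{\prime}(\tilde\rho)y_x)_x+g_1+g_2+S(\tilde\rho)\big]$ and take $L^2_x$ norms, which reduces the lemma to bookkeeping of decay exponents based on the pointwise-in-time bounds of Lemmas \ref{newlemma2.2}--\ref{blemma2.4}; your checks are right: $(1+t)^{2+2\lambda}\|y_{tt}\|^2\le C(\cdots)(1+t)^{\lambda-1}$, the borderline contributions are exactly $\|y_{xx}\|^2$ (Lemma \ref{blemma2.3}, $s=1$), $\|\tilde\rho_xy_x\|^2$, and $\|S(\tilde\rho)\|^2$, where \eqref{eq3.20}, i.e. $k_0+1\ge k(\lambda)$, gives $2(k_0+1)\sigma+\tfrac{1+\lambda}{2}\ge 2(1+\lambda)$, and $g_1,g_2$ decay strictly faster; since the $L^\infty$ prefactors carry $(\delta+\epsilon)$ while the $L^2$ factors come from the previous lemmas, every term is of size $C(N(0)^2+\delta^2+\delta\epsilon)$, with the extra $\ln^2(1+t)$ in the integer case. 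What your approach buys is economy: no new differential inequality, no choice of $\alpha$, no integration by parts in $t$; its price is that it uses the full strength of Lemma \ref{blemma2.4} (the $\|y_{tt}\|$ decay) pointwise in time, which is legitimate here because Lemma \ref{blemma2.4} precedes this lemma, whereas the paper's energy route is the one that would survive if such second-order information were not yet available. Two small touch-ups: the bound $\|\tilde\rho_x\|_{L^\infty}\le C\delta(1+t)^{-\frac{1+\lambda}{2}}$ requires Lemma \ref{newlemma2.1} for the $\rho_i$-parts of $\tilde\rho$ in addition to Lemma \ref{lamm-01}; and your aside about the source should read $\lambda>\tfrac17$ (so $k(\lambda)>1$), though the only fact actually used is $k_0+1\ge k(\lambda)$, which holds by the definition of $k_0$.
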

\begin{proof}
Multiplying $\eqref{bE:2.68}$ by $(1+t)^{2+\lambda}y_t$ and integrating the result over $\mathbb{R}$ lead to
\begin{align}\label{bE:2.76}
&\frac{d}{dt}\Big[\int_{\mathbb{R}}(1+t)^{2+\lambda}y_ty_{tt}d x+\int_{\mathbb{R}}\frac{1}{2}(1+t)^{2}y_t^2d x\Big]
\notag\\
\leq &C(1+t)^{2+\lambda}\int_{\mathbb{R}}(y_{{x}t}^2+y_{tt}^2)d x+C(1+t)^{\lambda}\int_{\mathbb{R}}y_{x}^2d x+C(1+t)\int_{\mathbb{R}}y_t^2d x
\notag\\
&
+(1+t)^{2+\lambda}\int_{\mathbb{R}}(g_{1t}+g_{2t}+S(\tilde \rho)_{t})y_{t}d x.
\end{align}
It follows from \eqref{bE:2.35} that
%\begin{align}\label{bE:2.78}
%&||
%%\notag\\
%%=&|(1+t)^{2+\lambda}\int_{\mathbb{R}}(P(\rho)-P(\tilde \rho)+P^{\prime}(\tilde \rho)y_{x})_ty_{{x}t}d x|
%%\notag\\
%%=&|(1+t)^{2+\lambda}\int_{\mathbb{R}}(P^{\prime}(\tilde \rho)-P^{\prime}(\rho))y_{{x}t}^2d x+(1+t)^{2+\lambda}\int_{\mathbb{R}}(P^{\prime}(\rho)-P^{\prime}(\tilde \rho)+P^{\prime\prime}(\tilde \rho)y_{x})\tilde \rho_ty_{{x}t}d x|
%%\notag\\
%\leq C(1+t)^{2+\lambda}\int_{\mathbb{R}}y_{{x}t}^2d x+C\delta^2(1+t)^{\lambda}\int_{\mathbb{R}}y_{x}^2d x
%\end{align}
%and
\begin{align}\label{bE:2.78}
&(1+t)^{2+\lambda}\int_{\mathbb{R}}g_{1t}y_{t}d x+(1+t)^{2+\lambda}\int_{\mathbb{R}}g_{2t}y_{t}d x
%\notag\\
%=&|(1+t)^{2+\lambda}\int_{\mathbb{R}}(\frac{m^2}{\rho}-\frac{\tilde m^2}{\tilde \rho})_ty_{{x}t}d x|
%\notag\\
%=&|(1+t)^{2+\lambda}\int_{\mathbb{R}}(\frac{2mm_t}{\rho}-\frac{2\tilde m\tilde m_t}{\tilde \rho}+\frac{\tilde m^2\tilde \rho_t}{\tilde \rho^2}-\frac{m^2\rho_t}{\rho^2})y_{{x}t}d x|
\notag\\
\leq &C(1+t)^{2+\lambda}\int_{\mathbb{R}}(y_{{x}t}^2+y_{tt}^2)d x
+C(1+t)\int_{\mathbb{R}}y_t^2d x+C(\nu_1)\delta^2(1+t)^{\lambda}\int_{\mathbb{R}}y_{x}^2d x.
\end{align}
Moreover, it follows from Lemma \ref{blemma2.2} that
\begin{align}\label{bE:2.81}
(1+t)^{2+\lambda}\int_{\mathbb{R}}S(\tilde \rho)_{t}y_{t}d x\leq (1+t)\int_{\mathbb{R}}y_t^2d x+C(\nu_1)\delta^2 (1+t)^{-1-2\sigma[(k_0+1)-k(\lambda)]}.
\end{align}
Substituting \eqref{bE:2.78}-\eqref{bE:2.81} into \eqref{bE:2.76} and integrating the result over $[0,t]$, we deduce \eqref{bbE:2.86new} from
 Lemmas \ref{newlemma2.2}-\ref{blemma2.3}. Thus the proof is completed.
\end{proof}

\begin{proof}[\bf Proof of Theorem \ref{theorem1.1}]
Lemmas \ref{newlemma2.2}-\ref{blemma2.5} show that there exists some positive constant $C_0$ such that
\begin{align}\label{be:2.88}
&\sum_{s=0}^{2}[(1+t)^{(s+1)(1+\lambda)}\|\partial_{x}^{s}y_{x}(t)\|^2+(1+t)^{2+s(1+\lambda)}\|\partial_x^sy_t(t)\|^2]
%\notag\\
%& +\sum_{s=0}^{2}\int_0^t[(1+\tau)^{\lambda+s(1+\lambda)}\|\partial_{x}^{s+1}y(\tau)\|^2+(1+\tau)^{1+s(1+\lambda)}\|\partial_{x}^sy_t(\tau)\|^2]d\tau
\notag\\
\leq &\begin{cases}
C_0(N(0)^2+\delta^2+\delta\epsilon), \quad\mbox{if} \quad k(\lambda)\notin \mathbb N^+,\\
C_0(N(0)^2+\delta^2+\delta\epsilon)\ln^2 (1+t),\quad\mbox{if} \quad k(\lambda)\in \mathbb N^+
\end{cases}
\end{align}
provided that $\epsilon \ll 1$. Choose $\epsilon=4C_0(N(0)+\delta)$ and suppose that $N(0)+\delta\ll 1$, then we can obtain from \eqref{be:2.88} that
\begin{align}
N(T)\leq
\begin{cases}
 \frac{\epsilon}{\sqrt 2},\quad\mbox{if} \quad k(\lambda)\notin \mathbb N^+,\\
 \frac{\epsilon}{\sqrt 2}\ln (1+T),\quad\mbox{if} \quad k(\lambda)\in \mathbb N^+,\notag
\end{cases}
\end{align}
 which closes the a priori assumption \eqref{ass}.
Therefore the proof is completed.
\end{proof}

\appendix
\renewcommand\appendix{\par
    \gdef\thesection{Appendix \Alph{section}}}

\section{Proof of Lemma \ref{newlemma2.1}}
For the case $i=1$,
%In this case, $c_{1,1}+\frac{1+\lambda}{4}\leq 0$ implies $\lambda\geq \frac17$.
we consider the following ODE
\begin{align}\label{bE:5.17}
(P^{\prime}(\bar \rho)G_{1 \xi})_{\xi}+\frac{1+\lambda}{2}(\xi G_{1})_{\xi}-2\lambda G_{1}=\lambda P(\bar \rho)_{\xi}-h_{1\xi}
\end{align}
 with the condition
\begin{align}\label{bE:5.15}
\int_{\mathbb{R}} G_{1}d\xi=-\frac{P(\rho_+)-P(\rho_-)}{2}.
\end{align}
%where $h_1=\frac{1+\lambda}{2}\xi P(\bar \rho)_{\xi}+\frac{(P(\bar \rho)_{\xi})^2}{\bar \rho}.
%$
Let $\tilde G_{1}=G_{1}+\frac{1}{2}P(\bar \rho)_{\xi}$, then  \eqref{bE:5.17}-\eqref{bE:5.15} can be  rewritten as
\begin{align}\label{bE:5.18}
\begin{cases}
(P^{\prime}( \rho_+)\tilde G_{1 \xi})_{\xi}+\frac{1+\lambda}{2}(\xi \tilde G_{1})_{\xi}-2\lambda \tilde G_{1}
=((P^{\prime}( \rho_+)-P^{\prime}(\bar \rho))\tilde G_{1 \xi})_{\xi}+\tilde h_{1\xi},\\
\int_{\mathbb{R}} \tilde {G}_1(\xi)d\xi=0,\notag
\end{cases}
\end{align}
where
\[\tilde h_1=\frac{1}{2}P^{\prime}(\bar \rho)P(\bar \rho)_{\xi \xi}+\frac{1+\lambda}{4}\xi P(\bar \rho)_{\xi}-h_{1}.\]
Taking the Fourier transformation of $\tilde {G}_1(\xi)$ gives that
\begin{align}\label{bE:5.3}
\begin{cases}
\frac{1+\lambda}{2}\eta \mathscr{F}_{1 \eta}+(2\lambda+P^{\prime}(\rho_+)\eta^2)\mathscr{F}_1=-\mathscr{F}[((P^{\prime}( \rho_+)-P^{\prime}(\bar \rho))\tilde G_{1 \xi})_{\xi}]-\mathscr{F}[\tilde h_{1\xi}],\\
\mathscr{F}_{1}(0)=0,
\end{cases}
\end{align}
where $\mathscr{F}_1(\eta)=\mathscr{F}[\tilde G_{1}(\xi)]$.
We construct the following iterative sequences $\{\mathscr{F}_1^n\}$:
\begin{align}\label{bE:5.4}
\begin{cases}
\frac{1+\lambda}{2}\eta \mathscr{F}^{n+1}_{1\eta}+(2\lambda+P^{\prime}(\rho_+)\eta^2)\mathscr{F}_1^{n+1}=-\mathscr{F}[((P^{\prime}( \rho_+)-P^{\prime}(\bar \rho))\tilde G^n_{1\xi})_{\xi}]-\mathscr{F}[\tilde h_{1\xi}],\\
\mathscr{F}_{1}^{n+1}(0)=0,
\end{cases}
\end{align}
%with given $\tilde G_1^0$ satisfying $\mathscr{F}[((P^{\prime}( \rho_+)-P^{\prime}(\bar \rho))(\tilde G_{1})^0_{\xi})_{\xi}](\eta)\equiv 0$,
where $\tilde G_1^n(\xi)=\mathscr{F}^{-1}[\mathscr{F}_1^n(\eta)]$ with $n\in \mathbb N^+$ and $\tilde G_1^0(\xi)=0$. It follows from \eqref{bE:5.4} that
\begin{align*}
\mathscr{F}_1^{n+1}(\eta)=-\frac{2}{1+\lambda}\eta^{-\frac{4\lambda}{1+\lambda}}e^{-\frac{P^{\prime}(\rho_+)}{1+\lambda}\eta^2}\int_0^{\eta}\eta_1^{\frac{4\lambda}{1+\lambda}-1}e^{\frac{P^{\prime}(\rho_+)}{1+\lambda}\eta_1^2}\Big(\mathscr{F}[((P^{\prime}( \rho_+)-P^{\prime}(\bar \rho))\tilde G^n_{1\xi})_{\xi}]+\mathscr{F}[\tilde h_{1\xi}]\Big)d\eta_1.
\end{align*}

We claim that the sequences $\{\mathscr{F}_1^{n}\}\subset \mathscr{S}(\mathbb{R})$, where $\mathscr{S}(\mathbb{R})$ is the Schwartz space. % By using the Taylor expansion of $\{\mathscr{F}[((P^{\prime}( \rho_+)-P^{\prime}(\bar \rho))\tilde G^n_{1\xi})_{\xi}]+\mathscr{F}[\tilde h_{1\xi}]\}$ at $\eta=0$, we verify that $\mathscr{F}_1^n(0)=0$ and $\mathscr{F}_1^n(\eta) \in C^{\infty}([-\eta_1, \eta_1])$ for any $\eta_1\ll 1$.
Indeed, since $\tilde G_1^0(\xi)=0$ and $\mathscr{F}[\tilde h_{1\xi}]\in \mathscr{S}(\mathbb{R})$, it is straightforward to imply that for any nonnegative integers $\alpha$ and $\beta$,
\begin{equation}\label{a.6}
|\eta^{\alpha}\partial_{\eta}^{\beta}\mathscr{F}_1^{1}|\rightarrow 0, \quad \mbox{as}\quad |\eta|\rightarrow \infty.\notag
\end{equation}
%where we have used the facts that there exists some $K=K(\alpha, \beta)>\frac{\alpha+\beta+1}{2}$ such that
%\[(1+\eta^2)^K\mathscr{F}[\tilde h_{1\xi}]\leq C_K\]
% for $\eta_1 \in (0,\eta)$,
%\[\eta_1^{\frac{4\lambda}{1+\lambda}-1}(1+\eta_1^2)^{1-K}e^{\frac{P^{\prime}(\rho_+)}{1+\lambda}\eta_1^2}\leq C_K \eta^{\frac{4\lambda}{1+\lambda}-1}(1+\eta^2)^{1-K}e^{\frac{P^{\prime}(\rho_+)}{1+\lambda}\eta^2}. \]
Thus $\mathscr{F}_1^{1} \in \mathscr{S}(\mathbb{R})$ holds. In the same way, we can verify $\{\mathscr{F}_1^{n}\}\subset \mathscr{S}(\mathbb{R})$.

It follows from the above claim that $\{\mathscr{F}_1^n\} \subset \chi^{m_1}(\mathbb{R})$ for any given integer $m_1>0$, where $ \chi^{m_1}(\mathbb{R})$ is given in \eqref{a.2.29}. We will use the contraction principle to show that $\mathscr{F}_1^n$ has a unique limit in $\chi^{m_1}(\mathbb{R})$. To this end,
let $\Delta_1^{n+1}=\mathscr{F}_1^{n+1}-\mathscr{F}_1^n$ and we hope that
\begin{align}\label{bE:5.8}
\|\Delta_1^{n+1}\|_{\chi^{m_1}(\mathbb{R})}\leq C\delta \|\Delta_1^{n}\|_{\chi^{m_1}(\mathbb{R})}\leq \frac12\|\Delta_1^{n}\|_{\chi^{m_1}(\mathbb{R})}.
\end{align}
It remains to prove \eqref{bE:5.8}. Note that $\Delta_1^{n+1}$ satisfies that
\begin{align}\label{bE:5.9}
\frac{1+\lambda}{2}\eta \Delta_{1\eta}^{n+1}+(2\lambda+P^{\prime}(\rho_+)\eta^2)\Delta_1^{n+1}=-\mathscr{F}[((P^{\prime}( \rho_+)-P^{\prime}(\bar \rho))(\tilde G^n_{1\xi}-\tilde G^{n-1}_{1\xi}))_{\xi}],
\end{align}
we take the procedure as $\eqref{bE:5.9}\times \bar \Delta_1^{n+1}+\bar{\eqref{bE:5.9}} \times \Delta_1^{n+1}$ and integrate the result over $\mathbb{R}$ to obtain
\begin{align}\label{a.9}
&\quad\int_{\mathbb{R}} P^{\prime}(\rho_+)\eta^2|\Delta_1^{n+1}|^2d\eta
\notag\\
&=\Big(\frac{1+\lambda}{4}-2\lambda\Big)\int_{\mathbb{R}}|\Delta_1^{n+1}|^2d\eta-\frac12\int_{\mathbb{R}}\mathscr{F}[((P^{\prime}( \rho_+)-P^{\prime}(\bar \rho))(\tilde G^n_{1\xi}-\tilde G^{n-1}_{1\xi}))_{\xi}]\bar \Delta_1^{n+1}d\eta
\notag\\
&\quad-\frac12\int_{\mathbb{R}}\bar {\mathscr{F}}[((P^{\prime}( \rho_+)-P^{\prime}(\bar \rho))(\tilde G^n_{1\xi}-\tilde G^{n-1}_{1\xi}))_{\xi}]\Delta_1^{n+1}d\eta
\notag\\
%=&\int_{\mathbb{R}} i\eta F[(P^{\prime}( \rho_+)-P^{\prime}(\bar \rho))(\tilde G^n_{1\xi}-\tilde G^{n-1}_{1\xi})]\tilde F_1^{n+1}d\eta
%\notag\\
&\leq \tilde C\int_{\mathbb{R}}|\Delta_1^{n+1}|^2d\eta+ \nu_1 \int_{\mathbb{R}} \eta^2|\Delta_1^{n+1}|^2d\eta
+C(\nu_1)\int_{\mathbb{R}}|F[(P^{\prime}( \rho_+)-P^{\prime}(\bar \rho))(\tilde G^n_{1\xi}-\tilde G^{n-1}_{1\xi})]|^2d\eta
\notag\\
&= \tilde C\int_{\mathbb{R}}|\Delta_1^{n+1}|^2d\eta+ \nu_1 \int_{\mathbb{R}} \eta^2|\Delta_1^{n+1}|^2d\eta
+C(\nu_1)\int_{\mathbb{R}}|(P^{\prime}( \rho_+)-P^{\prime}(\bar \rho))(\tilde G^n_{1\xi}-\tilde G^{n-1}_{1\xi})|^2d\eta
%&\leq \nu_1 \int_{\mathbb{R}} \eta^2(\Delta_1^{n+1})^2d\eta+C(\nu_1)\delta^2\int_{\mathbb{R}}(\tilde G^n_{1\xi}-\tilde G^{n-1}_{1\xi})^2d\eta
%\notag\\
%= &\nu_1 \int_{\mathbb{R}} \eta^2(\tilde F_1^{n+1})^2d\eta+C(\nu_1)\delta^2\int_{\mathbb{R}}\{F[(\tilde G^n_{1\xi}-\tilde G^{n-1}_{1\xi})]\}^2d\eta\notag\\
\notag\\
&\leq
\tilde C\int_{-M}^M|\Delta_1^{n+1}|^2d\eta+\tilde C\int_{|\eta|\geq M}|\Delta_1^{n+1}|^2d\eta+\nu_1 \int_{\mathbb{R}} \eta^2|\Delta_1^{n+1}|^2d\eta+C(\nu_1)\delta^2\int_{\mathbb{R}}\eta^2 |\Delta_1^{n}|^2d\eta,
\end{align}
where $\bar f$ represents the conjugate complex of $f$ and we have used Plancherel's Theorem in the last two inequalities.
By choosing $\nu_1$ small and $M$ large enough so that $P^{\prime}(\rho_+)M^2>2\tilde C$, we deduce from \eqref{a.9} that
\begin{align}\label{bE:5.11}
&\int_{\mathbb{R}} \eta^2|\Delta_1^{n+1}|^2d\eta \leq C\int_{-M}^M|\Delta_1^{n+1}|^2d\eta+C(\nu_1)\delta^2\int_{\mathbb{R}}\eta^2 |\Delta_1^{n}|^2d\eta.
\end{align}
Moreover, the explicit expression $\Delta_1^{n+1}$ from \eqref{bE:5.9} shows that
\begin{align*}
&\int_{-M}^M |\Delta_1^{n+1}|^2d\eta
\notag\\
=&\int_{-M}^M\Big|\frac{2}{1+\lambda}\eta^{-\frac{4\lambda}{1+\lambda}}e^{-\frac{P^{\prime}(\rho_+)}{1+\lambda}\eta^2}\int_0^{\eta} \eta_1^{\frac{4\lambda}{1+\lambda}}e^{\frac{P^{\prime}(\rho_+)}{1+\lambda}\eta_1^2}\{\mathscr{F}[((P^{\prime}( \rho_+)-P^{\prime}(\bar \rho))(\tilde G^n_{1\xi}-\tilde G^{n-1}_{1\xi})\}d\eta_1\Big|^2d\eta
\notag\\
\leq &C\int_{-M}^M\int_{\mathbb{R}}|\mathscr{F}[((P^{\prime}( \rho_+)-P^{\prime}(\bar \rho))(\tilde G^n_{1\xi}-\tilde G^{n-1}_{1\xi})]|^2d\eta_1d\eta \leq  C_M\delta^2\int_{\mathbb{R}}\eta^2 |\Delta_1^{n}|^2d\eta,
\end{align*}
which, together with \eqref{bE:5.11}, yields that
\begin{align}\label{a.11}
&\int_{\mathbb{R}} |\Delta_1^{n+1}|^2d\eta+\int_{\mathbb{R}} \eta^2|\Delta_1^{n+1}|^2d\eta
\leq C(\nu_1,M)\delta^2\int_{\mathbb{R}}\eta^2 |\Delta_1^{n}|^2d\eta\leq\frac12\int_{\mathbb{R}}\eta^2 |\Delta_1^{n}|^2d\eta.\nonumber
\end{align}
%Firstly, we claim that $(F[\tilde {G}_1])(\eta)\in C^{\infty}(R)$. It remains to consider the integral domain $[-\eta_1, \eta_1]$ with $\eta_1\ll 1.$ Then it holds that
%\begin{align}
%e^{\frac{P^{\prime}(\rho_+)}{1+\lambda}\eta^2}F[\tilde {h}_{1\xi}](\eta)=\sum_{i=0}^{\infty}\frac{1}{i!}(e^{\frac{P^{\prime}(\rho_+)}{1+\lambda}\eta^2}F[\tilde {h}_{1\xi}])^{(i)}(0)\eta^{i} \quad \mbox{for} \quad \eta \in [-\eta_1, \eta_1],
%\end{align}
%which yields that
%\begin{align}
%(F[\tilde {G}_1])(\eta)=\sum_{i=0}^{\infty}\frac{1}{\frac{4\lambda}{1+\lambda}+i}\frac{1}{i!}(e^{\frac{P^{\prime}(\rho_+)}{1+\lambda}\eta^2}F[\tilde {h}_{1\xi}])^{(i)}(0)\frac{2}{1+\lambda}\eta^{i}e^{-\frac{P^{\prime}(\rho_+)}{1+\lambda}\eta^2} \in C^{\infty}([-\eta_1, \eta_1]).
%\end{align}
In the same way, we can verify further that \eqref{bE:5.8} holds.

 Thus, it follows from the contraction mapping principle that
\eqref{bE:5.3} admits a unique solution $\mathscr{F}_1 \in \chi^{m_1}(\mathbb{R})$. Moreover, applying the similar argument in  \eqref{bE:5.8} we get
\begin{equation}
\|\mathscr{F}_1\|_{\chi^{m_1}(\mathbb{R})}\leq C\delta,\quad \mbox{or equivalently}, \quad \|\tilde G_{1}\|_{\chi^{m_1}(\mathbb{R})}\leq C\delta.\nonumber
\end{equation}
 Note that $\tilde G_{1}=G_{1}+\frac{1}{2}P(\bar \rho)_{\xi}$, it holds that $\|G_{1}\|_{\chi^{m_1}(\mathbb{R})}\leq C\delta.$

%For the case of $i=2$, let
% \begin{align}
% \tilde G_2=G_2-\frac{c_{2,2}}{c_{1,2}}G_{1},
% \end{align}
%where $c_{1,2}, c_{2,2}$ are given by \eqref{a.2.22}-\eqref{a.2.23}.
%Then we can rewrite \eqref{e2.24} and \eqref{bbe:2.19} as
%\begin{align}\label{bbE:2.18}
%\begin{cases}
%(P^{\prime}(\rho^+)\tilde G_{2\xi})_{\xi}+\frac{1+\lambda}{2}(\xi \tilde G_{2})_{\xi}+c_{1,2}\tilde G_{2}=((P^{\prime}(\rho^+)-P^{\prime}(\bar \rho))\tilde G_{2\xi})_{\xi}+\tilde h_{2\xi},\\
%\int_{\mathbb{R}} \tilde {G}_2(\xi)d\xi=0,
%\end{cases}
%\end{align}
%where
%\begin{align}
%\tilde h_2=-\frac{c_{2,2}}{c_{1,2}}\Big(P^{\prime}(\bar \rho)G_{1\xi}+\frac{1+\lambda}{2}\xi G_{1}\Big)-h_2.
%\end{align}
Similarly, for the general case $i\geq 2$, let $ \tilde G_i=G_i-\frac{c_{2,i}}{c_{1,i}}G_{i-1}$ and
we can rewrite \eqref{e2.24} and \eqref{bbe:2.19} as
\begin{align}\label{bbE:2.18}
\begin{cases}
(P^{\prime}(\rho^+)\tilde G_{i\xi})_{\xi}+\frac{1+\lambda}{2}(\xi \tilde G_{i})_{\xi}+c_{1,i}\tilde G_{i}=((P^{\prime}(\rho^+)-P^{\prime}(\bar \rho))\tilde G_{i\xi})_{\xi}+\tilde h_{i\xi},\\
\int_{\mathbb{R}} \tilde {G}_i(\xi)d\xi=0,
\end{cases}
\end{align}
where $\tilde h_i=-\frac{c_{2,i}}{c_{1,i}}\Big(P^{\prime}(\bar \rho)G_{i-1\xi}+\frac{1+\lambda}{2}\xi G_{i-1}\Big)-h_i$ and $c_{1,i}, c_{2,i},h_i$ are given in \eqref{a.2.22}-\eqref{2.21new}. Note that $G_1 \in \chi^{m_1}(\mathbb{R})$ with any given integer $m_1>0$, which implies that $\mathscr{F}[\tilde h_{i\xi}]\in \mathscr{S}(\mathbb{R})$.
Thus in the same way we can see that \eqref{bbE:2.18} admits a unique solution $G_i \in \chi^{m_i}(\mathbb{R})$ and  $\|G_{i}\|_{\chi^{m_i}(\mathbb{R})}\leq C\delta$ for any given integer $m_i>0$.
Thus, the proof of Proposition \ref{newlemma2.1} is completed.

\

\noindent
{\bf Acknowledgements\ }
S. Geng's research is supported in part by the National Natural Science Foundation of China (No. 12071397).
 F. Huang's research is supported in part by the
National Natural Science Foundation of China (No. 11688101).

%------------------------------------------------------------------------------
% End of journal.tex
%------------------------------------------------------------------------------
     \end{document}